\tikzset{ font={\fontsize{9pt}{12}\selectfont}}
\newtheorem{theorem}{Theorem}
\newtheorem{proposition}[theorem]{Proposition}
\newtheorem{lemma}[theorem]{Lemma}
\newtheorem{corollary}[theorem]{Corollary}
\theoremstyle{definition}
\newtheorem{definition}[theorem]{Definition}
\newtheorem{remark}[theorem]{Remark}
\newtheorem{example}[theorem]{Example}
\newcommand{\com}{\mathbb{C}}
\newcommand{\C}{\mathbb{C}}
\newcommand{\R}{\mathbb{R}}
\newcommand{\real}{\mathbb{R}}
\newcommand{\Z}{\mathbb{Z}}
\newcommand{\mj}{\mathcal{J}}
\newcommand{\mk}{\mathcal{K}}
\def\s{\mathbb{S}}
\def\cal {\mathcal}
\def\D{{\mathbb D}}
\def\eps{{\epsilon}}
\def\vareps{{\varepsilon}}
\def\bo{{\boldsymbol \omega}}
\def\T{{\mathbb T}}
\def\PP{{\mathfrak A}}
\def\VV{{\mathfrak V}}
\def\UU{{\mathfrak U}}
\def\p{{\mathfrak p}}
\def\q{{\mathfrak q}}
\def\diam{{\rm diam}}
\def\cf{{\cal C}_f}
\def\pf{{\cal P}_f}
\def\a{{\mathfrak a}}
\def\b{{\mathfrak b}}
\title{Julia sets with a wandering branching point}
\thanks{This research was supported in part by the ANR grant Lambda ANR-13-BS01-0002}
\email{xavier.buff@math.univ-toulouse.fr}
\address{ %
  Institut de Math\'ematiques de Toulouse\\
 Universit\'e Paul Sabatier\\
  118, route de Narbonne \\
  31062 Toulouse Cedex \\
  France }
\email{Jordi.Canela\_Sanchez@math.univ-toulouse.fr}
\address{ %
Laboratoire d'Analyse et de Mathématiques Apliquées\\
 Université Paris-Est Marne-la-Vallée\\ 
 5, boulevard Descartes\\
 77454 Champs-sur-Marne\\
  France }
\email{pascale.roesch@math.univ-toulouse.fr}
\address{ %
  Institut de Math\'ematiques de Toulouse\\
 Universit\'e Paul Sabatier\\
  118, route de Narbonne \\
  31062 Toulouse Cedex \\
  France }
\begin{document}

\begin{abstract}
According to the Thurston No Wandering Triangle Theorem, a branching point in a locally connected quadratic Julia set is either preperiodic or precritical.  Blokh and Oversteegen proved that this theorem does not hold for higher degree Julia sets: there exist cubic polynomials whose Julia set is a locally connected dendrite with a branching point which is neither preperiodic nor precritical. In this article, we reprove this result, constructing such cubic polynomials as limits of cubic polynomials for which one critical point eventually maps to the other critical point which eventually maps to a repelling fixed point. 
\end{abstract}

\maketitle

\section*{Notations}

\begin{itemize}
\item $\C$ is the complex plane, 
\item $\D$ is the unit disk, 
\item $\s^1$ is the unit circle. 
\item $\T:=\R/\Z$. 
\end{itemize}

\section*{Introduction}

In this article, we consider polynomials $f:\C\to \C$ of degree at least $2$ as dynamical systems: the orbit of a point $z\in \C$ is the set 
\[{\cal O}(z):=\{f^{\circ n}(z)\}_{n\geq 0}.\]
The orbit of a point is finite if and only if the point is {\em preperiodic}. More precisely, a point $\alpha\in \C$ is preperiodic if $f^{\circ (r+s)}(\alpha)=f^{\circ r}(\alpha)$ for some integers $r\geq 0$ and $s\geq 1$.  If $r$ and $s$ are minimal integers such that $f^{\circ (r+s)}(\alpha)=f^{\circ r}(\alpha)$, then $r$ is the {\em preperiod} and $s$ is the {\em period}. The point $\alpha$ is {\em periodic} if the preperiod is $0$. In this case, the point is {\em repelling} if $\bigl|(f^{\circ s})'(\alpha)\bigr|>1$.

The filled-in Julia set $\mk_f$ is the set of points with bounded orbit and the Julia set $\mj_f$ is its topological boundary. 
The sets $\mk_f$ and  $\mj_f$ are compact subsets of $\C$. They are completely invariant: $f^{-1}(\mk_f) = f(\mk_f) = \mk_f$ and $f^{-1}(\mj_f) = f(\mj_f) = \mj_f$. 
Preperiodic points are contained in $\mk_f$. Repelling periodic points are contained in $\mj_f$. In fact, $\mj_f$ is the closure of the set of repelling periodic points (see e.g. \cite{Bear, Mi1}). 

A point $\omega\in \C$ is a critical point if the derivative of $f$ vanishes at $\omega$. The topology of $\mk_f$ and $\mj_f$ is related to the behavior of critical orbits. For example, $\mk_f$ and $\mj_f$ are connected if and only if the critical points of $f$ belong to $\mk_f$ (see e.g. \cite{Bear, Mi1}). 

A {\em dendritic polynomial} is a polynomial $f$ for which $\mj_f$ is a {\em dendrite}, i.e., $\mj_f$ is connected and locally connected and contains no simple closed curve. This is the case whenever each critical point is preperiodic to a repelling periodic point (see 
\cite[Th. V.4.2]{CaGa}).  

\begin{example}
The Julia set of the quadratic polynomial $f(z) = z^2+{\rm i}$ is a dendrite (see  Figure \ref{fig:z2i}). The unique critical point is $\omega=0$ and its orbit is 
\[
\xymatrix{0 \ar[r] & {\rm i}  \ar[r] & -1 + {\rm i}\ar@/^0.7pc/[r] & -{\rm i} \ar@/^0.7pc/[l]
}
\]
The derivative of $f^{\circ 2}$ at $-1 + {\rm i}$ is $4+4{\rm i}$ which has modulus $4\sqrt 2>1$. 
\end{example}

\begin{figure}[hbt!]
\centerline{
\includegraphics[height=5cm]{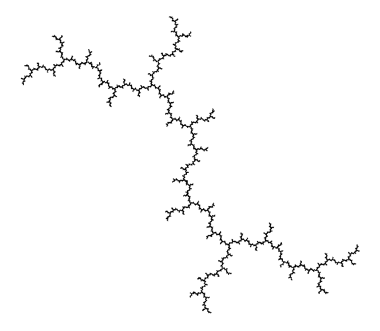}
}
\caption{The Julia set $\mj_f$ for $f(z) = z^2+{\rm i}$ is a dendrite.\label{fig:z2i}
}
\end{figure}

Assume $\mj_f$ is locally connected. 
A point $\xi\in\mj_f$ is a {\em branching point} if $\mj_f\setminus \{\xi\}$ has (at least) three connected components. According to Thurston \cite{Th}, if $f$ is a quadratic polynomial, the orbit of such a branching point contains a periodic point or a critical point. A point $z\in \C$ is {\em precritical} if $f^{\circ n}(z) = \omega$ for some critical point $\omega$ and some integer $n\geq 0$. 

\begin{theorem}[Thurston]
Let $f$ be a quadratic polynomial with locally connected Julia set $\mj_f$. If $\xi\in \mj_f$ is a branching point, then $\xi$ is preperiodic or precritical. 
\end{theorem}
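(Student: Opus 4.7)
The plan is to encode the dynamics combinatorially via Thurston's lamination framework and then argue that a wandering branching point would produce an impossible configuration of disjoint polygons in the closed disk. Since $\mj_f$ is locally connected, the inverse of the Böttcher coordinate near infinity extends continuously to a surjection $\gamma:\s^1\to \mj_f$ that semi-conjugates the doubling map $\tau:\theta\mapsto 2\theta$ on $\T$ to $f$ on $\mj_f$. The fibers of $\gamma$ form a closed equivalence relation on $\s^1$, and replacing each fiber by the convex hull of its preimage in $\overline{\D}$ yields a $\tau$-invariant lamination whose leaves do not cross. A branching point $\xi\in\mj_f$ corresponds to a fiber $\gamma^{-1}(\xi)$ of cardinality at least $3$, whose convex hull $P_\xi\subset \overline\D$ is a polygon with $k\geq 3$ vertices on $\s^1$.

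Assume for contradiction that $\xi$ is neither preperiodic nor precritical. The first step is to show that the iterates $P_n:=\tau^{\circ n}(P_\xi)$ are pairwise disjoint polygons with at least $3$ vertices each. Disjointness follows because the orbit of $\xi$ is infinite (not preperiodic) and the leaves of the lamination never cross; preservation of the vertex count follows because $\tau$ is injective on the set of vertices of $P_n$ (it would fail only if two vertices differ by $1/2$, which can be arranged to imply $\xi$ is precritical via the critical chord, contradicting the hypothesis).

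The second step is to track the cyclic arcs $I_1^n,\ldots,I_{k_n}^n$ cut out on $\s^1$ by the vertices of $P_n$; their lengths sum to $1$. Under $\tau$ each arc of length $<1/2$ doubles, while the unique arc (if any) containing the critical value angle is split in two. Because only one critical chord exists and the polygons $P_n$ are disjoint, the critical chord can be involved at most finitely often for each fixed branch of the lamination; in particular the shortest arc $\ell_n:=\min_i |I_i^n|$ must grow under iteration in all but finitely many steps. But $\ell_n\leq 1/k_n\leq 1/3$, so $\ell_n$ is uniformly bounded above by $1/3$, yet repeated doubling forces some $\ell_n$ to exceed $1/3$, a contradiction.

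The hard part is the length argument in the third step, which must simultaneously control (i) how often the critical chord interferes with the forward iterates, (ii) the possibility that $k_n$ grows along the orbit, and (iii) the interaction between the disjoint polygons $P_n$ competing for total circle measure $1$. This is precisely Thurston's original combinatorial bookkeeping that makes the No Wandering Triangle Theorem subtle: one needs to assemble all backward orbits of the $P_n$ into a countable family of pairwise disjoint polygons in $\overline{\D}$, then extract a quantitative contradiction from the fact that their total boundary arc length on $\s^1$ is at most $1$ while the doubling map relentlessly enlarges short arcs.
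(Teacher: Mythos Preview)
The paper does not prove this statement at all: it is quoted as Thurston's theorem and attributed to \cite{Th}, serving only as background motivation for the main result on cubics. So there is no ``paper's own proof'' to compare against; what remains is to assess your sketch on its merits.

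Your framework is the right one --- this \emph{is} Thurston's lamination argument --- and your first step (setting up the invariant lamination, identifying the branching point with a polygon $P_\xi$ of $\geq 3$ vertices, and showing the forward iterates $P_n$ are pairwise non-crossing polygons with at least three vertices) is essentially correct. The observation that vertex collisions under $\tau$ occur only along the critical diameter, hence only when $\xi$ is precritical, is exactly the mechanism Thurston isolates.

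The gap is in your second step and you effectively concede it in your third. The sentence ``the critical chord can be involved at most finitely often for each fixed branch of the lamination; in particular the shortest arc $\ell_n$ must grow under iteration in all but finitely many steps'' is not justified and is in fact the entire content of the theorem. Concretely: when all three complementary arcs of a triangle have length $<1/2$, doubling does \emph{not} simply double each arc --- it sends arc lengths $(a,b,c)$ to $(1-2a,\,1-2b,\,1-2c)$ after the cyclic order flips, so the shortest arc can shrink. The actual bookkeeping requires tracking how the infinitely many disjoint polygons $P_n$ share the finite circle length (or disk area), and extracting a contradiction from the fact that a wandering family of triangles would force the total to diverge. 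Your final paragraph names exactly this difficulty but does not resolve it; as written, the proposal is an outline of Thurston's strategy with the decisive combinatorial estimate missing. To complete it you would need either Thurston's original area/length count or one of the later streamlined versions (e.g., Kiwi's or Blokh--Levin's arguments on wandering gaps).
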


Blokh and Oversteegen \cite{BlOv} proved that such a result does not hold for higher degree polynomials.

\begin{theorem}[Blokh-Oversteegen]\label{thmA}
There exist dendritic cubic polynomials having a branching point  which is neither preperiodic nor precritical.
\end{theorem}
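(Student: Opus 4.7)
The plan is to realize the desired polynomial as a limit $f_\infty=\lim f_{\lambda_n}$ of cubic polynomials with simple critical dynamics: the first critical point $\omega_1$ iterates onto the second critical point $\omega_2$ after exactly $n$ steps, while $\omega_2$ is strictly preperiodic to a chosen repelling fixed point $\beta$. First I would set up a parameter slice $V$ of cubic polynomials on which $\omega_2(\lambda)$, $\beta(\lambda)$ and the relation $f_\lambda^{\circ m}(\omega_2)=\beta$ all persist, then look inside $V$ for the algebraic subvariety cut out by $f_\lambda^{\circ n}(\omega_1)=\omega_2$. Existence of parameters $\lambda_n\in V$ realizing this relation for each large $n$ follows from a standard Montel-type argument: were $f_\lambda^{\circ n}(\omega_1)$ to avoid both critical values for every $n$ on some open subset of $V$, the family would be normal there, which is incompatible with the non-hyperbolic behavior along $V$.

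At the parameter $\lambda_n$, both critical orbits are strictly preperiodic to the repelling cycle $\{\beta\}$, so $f_{\lambda_n}$ is dendritic by the criterion quoted in the introduction. Arranging $\beta$ to have at least two external rays landing on it, the $m$-fold pullback through the critical point $\omega_2$ yields at least four external rays landing at $\omega_2$, so $\omega_2$ is a branching point of $\mj_{f_{\lambda_n}}$. Continuing to pull back through the iterates $\omega_1, f_{\lambda_n}(\omega_1),\dots,f_{\lambda_n}^{\circ (n-1)}(\omega_1)$ (none of which is critical, by choice of combinatorics), each point $p_{n,j}:=f_{\lambda_n}^{\circ j}(\omega_1)$, $0\le j<n$, inherits at least three external rays and is a branching point of $\mj_{f_{\lambda_n}}$.

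Now extract a convergent subsequence $\lambda_{n_k}\to\lambda_\infty$ and set $f_\infty:=f_{\lambda_\infty}$. The persisting relation $f_\infty^{\circ m}(\omega_2)=\beta$ keeps $f_\infty$ dendritic and keeps $\omega_2$ a branching point of $\mj_{f_\infty}$. The orbit of $\omega_1$ under $f_\infty$ must be infinite, since otherwise $\lambda_\infty$ would itself satisfy a finite Misiurewicz relation incompatible with $n_k\to\infty$. Extracting a further subsequence so that $p_{n_k,j_k}\to \xi\in\mj_{f_\infty}$ for some $j_k\to\infty$, holomorphic motion of external rays over the basin of infinity (together with stability of their landing at eventually periodic endpoints) sends the three distinguished rays landing at $p_{n_k,j_k}$ to three distinct external rays of $f_\infty$ all landing at $\xi$. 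Hence $\xi$ is a branching point of $\mj_{f_\infty}$. A diagonal choice of $j_k$ arranged to avoid every prescribed finite list of preperiodic or precritical candidates shows $\xi$ can be taken to be neither preperiodic nor precritical.

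The crux of the proof will be the joint stability of the ray-landing pattern through the limit: one must show that the three external rays landing at $p_{n_k,j_k}$ stay distinct in the limit and land at the common point $\xi$, rather than separating into several landing points or collapsing. This demands quantitative control on the branches of $\mj_{f_{\lambda_n}}$ at $p_{n,j}$ as both $n$ and $j$ vary, for instance through Yoccoz-style puzzle shrinking estimates along the Misiurewicz locus $V$, and this is where the bulk of the technical work should go.
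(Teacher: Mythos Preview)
Your outline has the right flavor but contains two genuine gaps that the paper's argument is specifically designed to close.

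First, the sentence ``The persisting relation $f_\infty^{\circ m}(\omega_2)=\beta$ keeps $f_\infty$ dendritic'' is not justified. That relation controls only one critical point; the other critical point $\omega_1$ is uncontrolled at $\lambda_\infty$, and nothing you have said prevents it from lying in a Siegel disk, accumulating on a Cremer point, or otherwise producing a non--locally-connected Julia set. The paper avoids this by keeping \emph{every} polynomial in the sequence strictly postcritically finite (both critical points eventually land on the repelling fixed point $\alpha$), and then proving separately that the Carath\'eodory loops converge uniformly; the limit of dendrites is then shown to be a dendrite, and a sandwiching argument $\mj_f\subseteq\lim\mj_{f_n}\subseteq\mk_f$ forces $\mj_f$ to equal that dendrite.

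Second, the claim ``the orbit of $\omega_1$ under $f_\infty$ must be infinite, since otherwise $\lambda_\infty$ would itself satisfy a finite Misiurewicz relation incompatible with $n_k\to\infty$'' is simply false: nothing prevents a subsequential limit of parameters satisfying $f^{\circ n_k}(\omega_1)=\omega_2$ from being a Misiurewicz parameter of some unrelated type, or a superattracting parameter. More importantly, even if you could arrange this, it does not tell you that the particular limit point $\xi$ you extract is non-preperiodic and non-precritical; your ``diagonal choice of $j_k$ arranged to avoid every prescribed finite list'' is not an argument, because you must avoid \emph{all} such relations simultaneously, and a countable diagonalization does not by itself guarantee the rays stay separated.

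The paper's remedy for both issues is to replace your single subsequential limit by a recursively constructed Cauchy sequence. At each stage one perturbs an admissible polynomial $f_n$ to an admissible $f_{n+1}$ with strictly larger precritical depth $j_{f_{n+1}}>j_{f_n}$, choosing the perturbation small enough (less than an explicitly defined $\varepsilon_n$) that for \emph{any} later polynomial $g$ and any point $z$ within $\varepsilon_n$ of $\xi_{f_n}$, the first $j_{f_n}$ iterates $z,g(z),\dots,g^{\circ(j_{f_n}-1)}(z)$ are pairwise distinct and non-critical. Since $j_{f_n}\to\infty$, this forces the limit point $\xi$ to be non-preperiodic and non-precritical. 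Crucially, the branching point $\xi_{f_n}$ is not an arbitrary branch point but the canonical nodal point of the three preimages of the fixed point $\beta$; its persistence under the limit then follows from a clean lemma on convergence of nodal points under uniform convergence of Carath\'eodory loops, rather than from ad hoc ray-tracking. Your concluding paragraph correctly identifies ray stability as the crux, but the paper's nodal-point formulation is exactly what makes that crux tractable.
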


Our goal is to give a new proof of this result. The strategy of our proof consists in exhibiting a sequence $\{f_n\}$ of dendritic polynomials with a sequence $\{\xi_n\}$ of branching points in $\mj_{f_n}$ which are precritical to both critical points of $f_n$ and preperiodic to a repelling fixed point of $f_n$, such that 
\begin{itemize}
\item the sequence $\{f_n\}$ converges to a dendritic polynomial $f$,
\item the sequence $\{\xi_n\}$ converges to a branching point $\xi$ in $\mj_f$ and 
\item $\xi$ is neither precritical nor preperiodic for $f$.  
\end{itemize}
More precisely, we exhibit  sequences satisfying:
\begin{itemize}
\item $f_n^{\circ j_n}(\xi_n) = \omega_n$, $f_n^{\circ k_n}(\omega_n) = \omega'_n$ and $f_n^{\circ \ell_n}(\omega'_n) = \alpha_n$
for some increasing sequences of integers $\{j_n\}$, $\{k_n\}$ and $\{\ell_n\}$, 
where $\omega_n$ and $\omega'_n$ are the two critical points of $f_n$ and $\alpha_n$ is a repelling fixed point of $f_n$; 
\item $\mj_{f_n}\setminus \{\xi_n\}$ has (at least) three connected components containing points $\beta_n$, $\beta'_n$ and $\beta''_n$ such that $f_n(\beta_n) = f_n(\beta'_n)=f_n(\beta''_n)=\beta_n$. 
\end{itemize}
We shall see that we can choose $f_{n+1}$ arbitrarily close to $f_n$ for each $n$; this is enough to deduce Theorem \ref{thmA}. 


The paper is structured as follows. In \S\ref{sec:keyresults} we introduce the notion of admissible polynomials and state key results used in the proof of Theorem~\ref{thmA}: convergence of nodal points, convergence of Carath\'eodory loops and a Key Proposition regarding the existence of a particular sequence of admissible polynomials. In \S\ref{sec:prooftheorem3} we prove Theorem~\ref{thmA} assuming those key results. Finally, in \S\ref{sec:nodal} we prove the convergence of nodal points, in \S\ref{sec:convcar} we prove the convergence of Carathéodory loops and in \S\ref{sec:keyprop} we prove the Key Proposition.

\section{Definitions and key results\label{sec:keyresults}}

\subsection{Dendrites}

A {\em dendrite} $\mj\subset \C$ is a connected and locally connected compact set containing no simple closed curve. 
In this article, we assume in addition that $\mj$ is not reduced to a point. 
Properties of dendrites are discussed in \cite{whyburn}. 
In particular, a dendrite is uniquely arcwise connected.

\begin{definition}
Given two points $\beta$ and $\beta'$ in a dendrite $\mj$, we denote $[\beta,\beta']_{\mj}$ the arc joining $\beta$ and $\beta'$ in $\mj$.
\end{definition}

\begin{definition}
The {\em nodal point} of three points  $\beta$, $\beta'$ and $\beta''$ in a dendrite  $\mj$  is the unique point which belongs simultaneously to $[\beta,\beta']_{\mj}$, $[\beta',\beta'']_{\mj}$ and $[\beta'',\beta]_{\mj}$.
\end{definition}

If $\xi\in \mj$, then any connected component ${\cal C}$ of $\mj\setminus \{\xi\}$ is open in $\mj$ and $\overline {\cal C}\setminus {\cal C} = \{\xi\}$. 

\begin{definition}
A point $\xi$ in a dendrite $\mj$ is {\em non-separating} if $\mj\setminus \{\xi\}$ is connected.
\end{definition}

\begin{definition}
A point $\xi$ in a dendrite $\mj$ is a {\em branching point} if $\mj\setminus \{\xi\}$ has (at least) three connected components. It {\em separates} $\beta$, $\beta'$ and $\beta''$ in $\mj$ if $\beta$, $\beta'$ and $\beta''$ are in three distinct connected components of $\mj\setminus\{\xi\}$.
\end{definition}

Note that a nodal point is not necessarily a branching point. For example, when $\beta''\in [\beta,\beta']_\mj$, then the nodal point of $\beta$, $\beta'$ and $\beta''$ in $\mj$ is the point $\beta''$. In fact, the nodal point $\xi$ of $\beta$, $\beta'$ and $\beta''$ in $\mj$ separates $\beta$, $\beta'$ and $\beta''$ in $\mj$ if and only if $\xi\notin\{\beta, \beta', \beta''\}$.

The complement $\C\setminus \mj$ of a dendrite is connected and simply connected.  It follows that there is a (unique) conformal representation $\phi:\C\setminus \overline \D\to \C\setminus \mj$ such that $\phi(z) / z \to R>0$ as $z\to \infty$. Since $\mj$ is locally connected, a theorem by Torhorst \cite{torhorst}, based on the work on prime-ends of Carathédory (see the discussion in \cite{lasse}), asserts that $\phi$ extends  to a continuous map $\phi:\C\setminus \D\to \C$. 
Since $\mj$ has empty interior, this map is surjective and restricts to a map $\varphi:\s^1\to \mj$. 

\begin{definition}
The {\em Carathéodory loop} of $\mj$ is the restriction $\varphi:\s^1\to \mj$.
\end{definition}

The Carathéodory loop is a continuous and surjective map from $\s^1$ to $\mj$. 
We shall use the following result whose proof is given in \S\ref{sec:nodal}.

\begin{lemma}[Convergence of nodal points]\label{lem:convnodal}
Let $\{\mj_n\}$ be a sequence of dendrites. Assume the associated sequence $\{\varphi_n:\s^1\to \mj_n\}$ of Carathéodory loops converges uniformly to some non constant map $\varphi:\s^1\to \C$. Then, 
\begin{itemize}
\item $\mj:=\varphi(\s^1)$ is a dendrite with  Carathéodory loop $\varphi$; 
\item if $\{\beta_n\}$, $\{\beta'_n\}$ and $\{\beta''_n\}$ are sequences of points in $\mj_n$ converging to $\beta$, $\beta'$ and $\beta''$ in $\mj$, then the corresponding sequence of nodal points of $\beta_n$, $\beta'_n$ and $\beta''_n$ in $\mj_n$ converges to the nodal point of $\beta$, $\beta'$ and $\beta''$ in $\mj$. 
\end{itemize}
\end{lemma}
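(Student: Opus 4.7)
The plan is to handle the two conclusions separately, both using the exterior Riemann maps $\phi_n:\C\setminus\overline\D\to\C\setminus\mj_n$ of which $\varphi_n$ is the continuous boundary extension. Since $\varphi$ is non-constant, the diameters of $\mj_n$ stay in a compact subset of $(0,\infty)$, and so do the conformal radii $R_n$ of the normalization $\phi_n(z)/z\to R_n>0$ at infinity. The Laurent coefficients of $\phi_n$ are the Fourier coefficients of $\varphi_n$, so uniform convergence $\varphi_n\to\varphi$ upgrades to convergence $\phi_n\to\phi$ uniformly on compact subsets of $\C\setminus\overline\D$; Hurwitz shows $\phi$ is univalent with $\phi|_{\s^1}=\varphi$. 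A standard Koebe-type argument then identifies $\phi(\C\setminus\overline\D)$ with $\C\setminus\mj$, giving that $\C\setminus\mj$ is connected and, by open mapping considerations, that $\mj$ has empty interior. Together with the fact that $\mj=\varphi(\s^1)$ is compact, connected and locally connected (continuous image of a Peano continuum), this shows $\mj$ is a dendrite with $\varphi$ as its Carathéodory loop.

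For the convergence of nodal points I would rely on the following combinatorial characterization, valid in any dendrite $\mj'$ with Carathéodory loop $\varphi':\s^1\to\mj'$: given three points $\beta_1,\beta_2,\beta_3\in\mj'$ with distinct preimages $s,t,u\in\s^1$, the nodal point of $\beta_1,\beta_2,\beta_3$ in $\mj'$ is the unique point $\xi\in\mj'$ whose preimage $(\varphi')^{-1}(\xi)$ meets each of the three closed arcs of $\s^1$ cut out by $\{s,t,u\}$. This reduces to the identity
\[
[\beta,\beta']_{\mj'}\;=\;\varphi'(A_1)\cap\varphi'(A_2)
\]
for the arc between two points $\beta,\beta'\in\mj'$ with preimages $s',t'\in\s^1$ and $A_1,A_2$ the two closed arcs of $\s^1$ with endpoints $s',t'$: each $\varphi'(A_i)$ is a connected subset of $\mj'$ containing $\beta,\beta'$, hence contains the arc, and the converse inclusion uses $\varphi'(A_1)\cup\varphi'(A_2)=\mj'$ together with uniqueness of arcs in a dendrite.

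Armed with this, for each $n$ I would pick preimages $s_n,t_n,u_n\in\s^1$ of $\beta_n,\beta'_n,\beta''_n$ together with three preimages $\tau_n^1,\tau_n^2,\tau_n^3\in\varphi_n^{-1}(\xi_n)$ of the nodal point, arranged in the cyclic order $s_n,\tau_n^1,t_n,\tau_n^2,u_n,\tau_n^3$ on $\s^1$. Such a choice exists because the three branches of $\mj_n$ at $\xi_n$ containing $\beta_n,\beta'_n,\beta''_n$ correspond to three components of $\s^1\setminus\varphi_n^{-1}(\xi_n)$. Passing to a subsequence along which all these sequences converge in $\s^1$ and $\xi_n\to\xi\in\mj$, uniform convergence of $\varphi_n\to\varphi$ forces $\varphi(s)=\beta,\varphi(t)=\beta',\varphi(u)=\beta''$ and $\varphi(\tau^i)=\xi$. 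The weak cyclic order survives, so $\tau^1,\tau^2,\tau^3$ sit in the three closed arcs determined by $\{s,t,u\}$, and the characterization pins $\xi$ down as the nodal point of $\beta,\beta',\beta''$ in $\mj$. Since every convergent subsequence has the same limit, the full sequence converges. The hard part will be the degenerate situations where some of $s_n,t_n,u_n,\tau_n^i$ collide in the limit (e.g.\ when two of $\beta,\beta',\beta''$ coincide or when the nodal point of the limit equals one of them); handling them cleanly requires the closed-arc reading of the characterization and the freedom to choose different preimages using the branching structure of $\mj_n$ at $\xi_n$.
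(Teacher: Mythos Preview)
Your plan matches the paper's proof quite closely: both parts proceed via the exterior Riemann maps $\phi_n$, Hurwitz for univalence, and the arc characterization $[\varphi(\theta),\varphi(\theta')]_\mj=\varphi([\theta,\theta']_\T)\cap\varphi([\theta',\theta]_\T)$, followed by choosing three preimages of the nodal point in the three arcs and passing to a subsequence. Two small remarks. First, your justification of the \emph{converse} inclusion in the arc identity (``uniqueness of arcs in a dendrite'') is too vague; the paper's clean argument is that if $\xi=\varphi'(t_1)=\varphi'(t_2)$ with $t_1\in A_1$ and $t_2\in A_2$, then the two external rays of angles $t_1,t_2$ together with $\xi$ separate the plane, placing $\beta$ and $\beta'$ in different components of $\mj'\setminus\{\xi\}$, hence $\xi\in[\beta,\beta']_{\mj'}$. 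Second, the degenerate cases you flag as ``the hard part'' are in fact easy: if, say, the limiting angles satisfy $\theta=\theta'$, then the preimage $\tau^1$ trapped between $s_n$ and $t_n$ is squeezed to $\theta$ as well, so $\xi=\varphi(\theta)=\beta=\beta'$ is automatically the nodal point.
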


\subsection{Admissible polynomials}

Consider the affine space $\PP$ of monic cubic polynomials fixing $0$. 
We will restrict our study to the open subset $\VV\subset \PP$ of polynomials $f$ such that:
\begin{enumerate}

\item \label{item:V1} $f$ has three distinct repelling fixed point $\alpha=0$, $\beta$ and $\gamma$; 

\item \label{item:V2} $f$ has two distinct critical points $\omega_\alpha$, $\omega_\beta$; 

\item \label{item:V3} the ray of angle $0$ does not bifurcate and lands at $\alpha$; 
\item \label{item:V4} the ray of angle $1/2$ does not bifurcate and lands at $\beta$; 
\item \label{item:V5} the rays of angles $1/4$ and $-1/4$ do not bifurcate, land at $\gamma$, and separate the plane in two connected components
\begin{itemize}
\item $U_\alpha$ containing  $\alpha$, $\omega_\alpha$ and $f(\omega_\beta)$ and 
\item $U_\beta$ containing $\beta$, $\omega_\beta$, $f(\omega_\alpha)$ and $f^{\circ 2}(\omega_\alpha)$. 
\end{itemize}
\end{enumerate}

Figure \ref{fig:VV} shows the ray configuration and the position of the points involved in the definition of $\VV$. 

\begin{figure}[htb]
\setlength{\unitlength}{0.002cm}
\centerline{
\begin{picture}(5688,2844)(0,0)
\put(0,0){\fbox{\includegraphics[width=11.376cm]{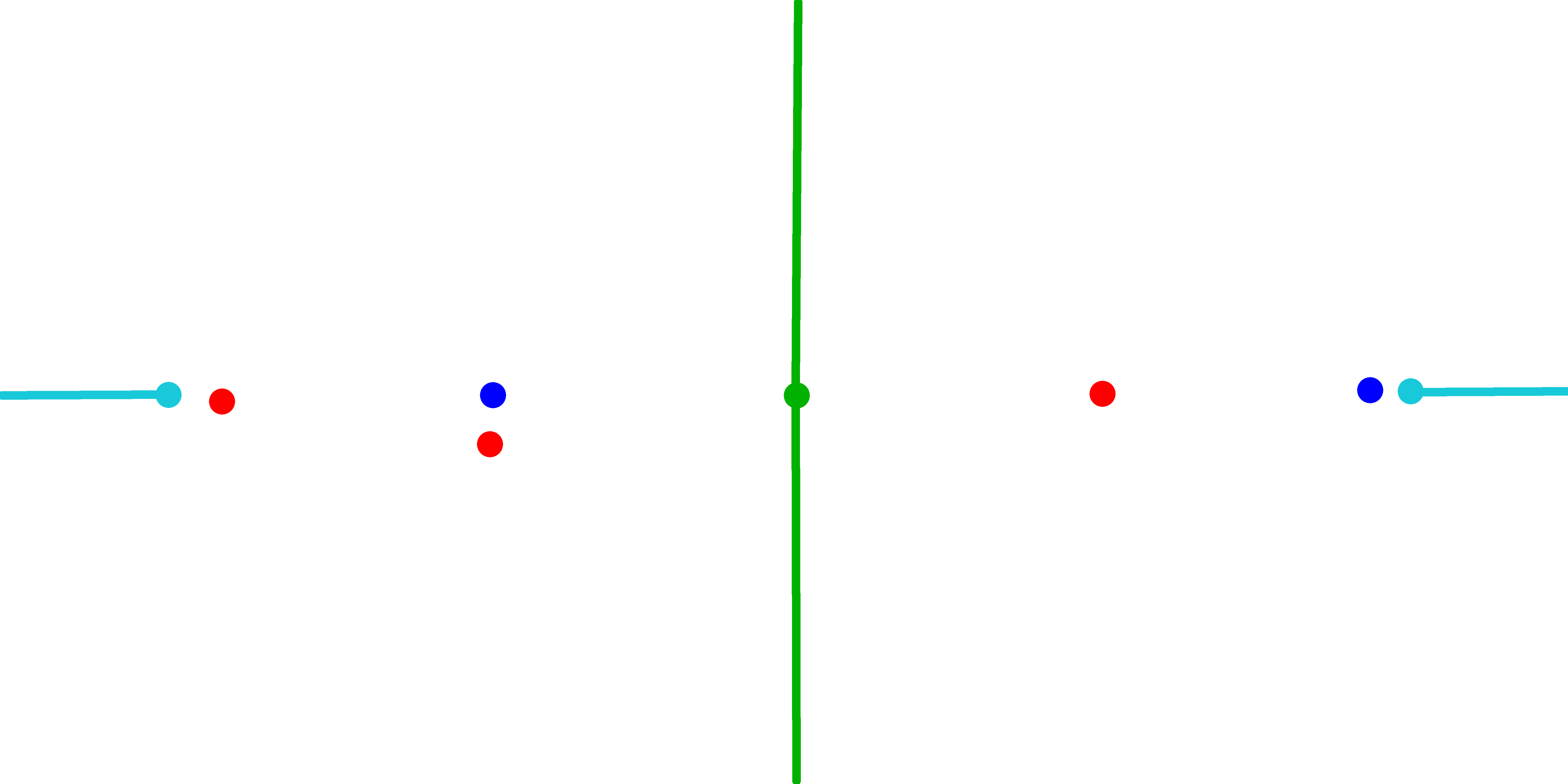}}}
\put(432,312){$U_\beta$}
\put(5296,312){$U_\alpha$}
\put(4016,1504){$\omega_\alpha$}
\put(1824,1520){$\omega_\beta$}
\put(5160,1520){$\alpha$}
\put(568,1520){$\beta$}
\put(3000,1400){$\gamma$}
\put(856,1480){$f(\omega_\alpha)$}
\put(1750,1000){$f^{\circ 2}(\omega_\alpha)$}
\put(4500,1440){$f(\omega_\beta)$}
\put(3020,2700){$\frac{1}{4}$}
\put(3020,100){$-\frac{1}{4}$}
\put(50,1200){$\frac{1}{2}$}
\put(5600,1250){$0$}
\end{picture}}
\caption{  The ray configuration and the position of the points involved in the definition of $\VV$. 
The angle of each external ray is indicated. 
\label{fig:VV}}
\end{figure}

\begin{definition}\label{def:admissible}
A cubic polynomial $f\in \VV$ is {\em admissible} if it has critical points $\omega$ and $\omega'$ and a branching point $\xi$, such that:
\begin{enumerate}
\item\label{item:admis1} $\xi$ is precritical to $\omega$, $\omega$ is precritical to $\omega'$ and  $\omega'$ is prefixed to $\alpha$;
\item\label{item:admis2}  $\xi$ separates $\beta$, $\beta'$ and $\beta''$ in $\mj_f$, where $f^{-1}(\beta) =\{\beta,\beta',\beta''\}$.
\end{enumerate}
We set $\xi_f:=\xi$ and denote by $j_f$ the integer $j\geq 0$ such that $f^{\circ j}(\xi) = \omega$. 
\end{definition}



\begin{figure}[htb]
\setlength{\unitlength}{1cm}
\centerline{
\begin{picture}(11.64,2.22)(0,0)
\put(0,0){\includegraphics[width=11.64cm]{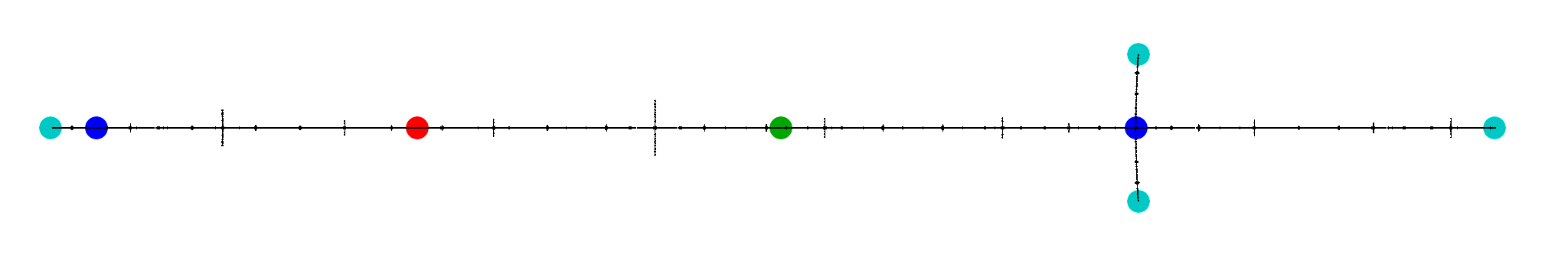}}
\put(2.5,1.10){${\omega'=f^{\circ 2}(\omega)}$}
\put(0.6,1.10){${f(\omega)}$}
\put(8.7,1.10){${\omega=\xi}$}
\put(0.15,1.10){${\beta}$}
\put(8.53,0.10){${\beta''}$}
\put(8.53,1.65){${\beta'}$}
\put(10.5,1.10){${\alpha=f(\omega')}$}
\put(5.7,1.15){$\gamma$}
\end{picture}}
\caption{The Julia set of an admissible polynomial $f$ with critical points $\omega$ and $\omega'$. We have $f^{\circ 2}(\omega) =  \omega'$ and $f(\omega')=\alpha$. In this example, $j_f=0$. \label{fig:suitable}}
\end{figure}

Our key result, proved in \S\ref{sec:keyprop}, is the following. 

\begin{proposition}[Key proposition]\label{prop:key}
Assume $f\in \VV$ is an admissible polynomial. Then, there is a sequence  $\{g_m\}$ of admissible polynomials which converges to $f$, such that $\{\xi_{g_m}\}$ converges to $\xi_f$ and  $j_{g_m}>j_f$ for all $m$. 
\end{proposition}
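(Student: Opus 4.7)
The strategy is to combine a holomorphic deformation argument in parameter space with a Mañé--Sad--Sullivan-style non-normality / Montel argument, producing admissible polynomials accumulating at $f$ in the manner of Misiurewicz parameters.

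First, I would set up a one-parameter deformation of $f$ preserving one of the admissibility equations. Take the $1$-dimensional analytic curve $\Sigma\subset\PP$ defined by $g^{\circ\ell_f}(\omega'_g)=0$; it passes through $f$, and along $\Sigma$ the condition that $\omega'_g$ be prefixed to $\alpha_g=0$ is preserved. On a neighborhood $U\subset\Sigma$ of $f$ all relevant objects vary holomorphically in $g$: the fixed points $\alpha_g=0,\beta_g,\gamma_g$, the preimages $\beta'_g,\beta''_g$ of $\beta_g$, the critical points $\omega_g,\omega'_g$, and -- by the Convergence of Nodal Points lemma combined with stability of the dendrite structure near $f$ -- the nodal point $\xi_g$ of $\{\beta_g,\beta'_g,\beta''_g\}$. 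After shrinking $U$, all $g\in U$ remain in $\VV$ by openness of the ray landing conditions.

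Next, consider the holomorphic family $h_n\colon U\to\C$, $h_n(g):=g^{\circ n}(\xi_g)$. At $g=f$ the orbit of $\xi_f$ visits $\omega$ at step $j_f$, $\omega'$ at step $j_f+k_f$, and lands on $\alpha=0$ at step $N:=j_f+k_f+\ell_f$, where it is trapped. For $g\in U\setminus\{f\}$ the orbit of $\xi_g$ escapes $\alpha_g=0$ since $\alpha_g$ is repelling, so the family $\{h_n\}$ is not normal at $f$. Applying Montel's theorem with the three pairwise disjoint holomorphic sections $g\mapsto\omega_g$, $g\mapsto\omega'_g$, $g\mapsto\beta_g$, I would extract parameters $g_m\in U$ with $g_m\to f$ and integers $n_m>j_f$ such that $h_{n_m}(g_m)=\omega_{g_m}$. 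This secures precriticality of $\xi_{g_m}$ to $\omega_{g_m}$ with $j_{g_m}:=n_m>j_f$, while the condition on $\omega'_{g_m}$ is automatic on $\Sigma$.

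The principal obstacle is securing the remaining admissibility condition, namely that $\omega_{g_m}$ be precritical to $\omega'_{g_m}$: $g_m^{\circ k}(\omega_{g_m})=\omega'_{g_m}$ for some $k$. This is an independent analytic equation that generically fails on $\Sigma$ away from $f$, and is not provided by the Montel argument above. I expect the resolution to require either upgrading the argument to a two-parameter deformation inside the full $\PP$ and counting common zeros of a vector-valued map $\PP\to\C^2$ via non-normality / intersection-theoretic arguments, or a combinatorial stability argument in which, for $g_m$ close to $f$ and $n_m$ large, the tail of the orbit of $\xi_{g_m}$ under $g_m$ shadows that of $\xi_f$ (which does pass through $\omega'_f\approx\omega'_{g_m}$ at step $j_f+k_f$), this shadowing being promoted to exact equality by a second Montel application to the family $g\mapsto g^{\circ k}(\omega_g)$. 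Once full admissibility is secured, the separation condition (2) of the definition follows from continuity of Carathéodory loops and the convergence of nodal points, and the convergence $\xi_{g_m}\to\xi_f$ is automatic from holomorphic dependence.
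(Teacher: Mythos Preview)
Your approach has a genuine gap at the very first step: the nodal point $\xi_g$ cannot be made to vary holomorphically on your curve $\Sigma$ in a way that supports the Montel argument. On $\Sigma$ only $\omega'_g$ is prefixed; the other critical orbit is free, so for $g\in\Sigma\setminus\{f\}$ the Julia set is typically not a dendrite and the nodal-point definition is unavailable. The natural alternative---following $\xi_f$ holomorphically as the nearby root of $g^{\circ(j_f+k_f+\ell_f)}(z)=0$---forces $g^{\circ n}(\xi_g)=0$ for every $n\ge j_f+k_f+\ell_f$ and every $g$ near $f$, so the family $\{h_n\}$ is eventually constant and Montel yields nothing. Thus the dichotomy is: either $\xi_g$ is holomorphic but its orbit is trapped at $0$, or $\xi_g$ is the actual nodal point but is defined only once $g$ is already postcritically finite. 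Neither option launches your non-normality argument, and the self-identified obstacle (precriticality of $\omega_g$ to $\omega'_g$) remains unresolved on top of this.

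The paper proceeds in the opposite order. It first produces postcritically finite perturbations $g_m\to f$ by a Rouché argument in the full two-parameter family $f_{a,b}$ (your suggested ``intersection-theoretic'' route, made precise): along the curve where $g^{\circ(k+\ell)}\bigl(\bo(g)\bigr)=0$, one solves $g^{\circ\ell}\bigl(\bo'(g)\bigr)=\bo_{-m}(g)$, where $\bo_{-m}(g)$ is an $m$-th iterated preimage of $\bo(g)$ in $V_g$ converging to $0$. Crucially this \emph{swaps the roles of the two critical points}: the perturbed ``first'' critical point $\omega_m$ is near $\omega'$, not near $\omega$, and $g_m$ acquires an $(m+\ell,k+\ell)$-configuration. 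Only then, with $g_m$ already postcritically finite and hence dendritic, does one define $\xi_m$ as the nodal point of $\beta_m,\beta'_m,\beta''_m$ and prove $g_m^{\circ(j+k)}(\xi_m)=\omega_m$ by an external-ray analysis: one tracks which of eight rays with angles $\theta_i+\eta_m^\pm$ near $\xi$ land together after perturbation, showing they split into two groups of four landing at points $\xi_m^\pm$, one of which separates $\beta_m,\beta'_m,\beta''_m$. This ray combinatorics is the real heart of the proof and has no counterpart in your sketch.
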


This result shall be completed with the following observation. 
An admissible polynomial is strictly postcritically finite. It follows that its Julia set is a dendrite (see Figure~\ref{fig:suitable}) and thus, has an associated Carathéodory loop. We shall prove in \S\ref{sec:convcar} that the convergence of polynomials implies the convergence of the associated Carathéodory loops. 

\begin{lemma}[Convergence of Carathéodory loops]\label{lem:convcaratheodory}
Let $\{g_m\}$ be a sequence of cubic polynomials, with locally connected Julia sets, which converges to an admissible polynomial $f$. Then, the sequence of Carathéodory loops of $\mj_{g_m}$ converges uniformly to the Carathéodory loop of $\mj_f$.
\end{lemma}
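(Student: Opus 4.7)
The plan is to combine pointwise convergence of $\{\varphi_m\}$ on a dense subset of $\s^1$ with a uniform equicontinuity estimate, and then invoke Arzel\`a--Ascoli. Since $f$ is admissible it is strictly postcritically finite, so $\mj_f$ is a dendrite and its Carath\'eodory loop $\varphi:\s^1\to\mj_f$ exists and satisfies the semiconjugacy $f\circ\varphi(\theta)=\varphi(3\theta)$. Each $g_m$ has locally connected Julia set by hypothesis, hence a Carath\'eodory loop $\varphi_m:\s^1\to\mj_{g_m}$ with $g_m\circ\varphi_m(\theta)=\varphi_m(3\theta)$. Because $g_m\to f$ locally uniformly on $\C$, the inverse B\"ottcher coordinates $\phi_{g_m}^{-1}$ converge to $\phi_f^{-1}$ locally uniformly on $\C\setminus\overline{\D}$, since they are the unique conjugacies to $z\mapsto z^3$ tangent to the identity at $\infty$. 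To get pointwise convergence on $\s^1$ itself, note that every rational angle $\theta\in\rat/\Z$ is preperiodic under tripling, so the $f$-ray at $\theta$ lands at a preperiodic point whose forward orbit hits a repelling cycle (all periodic points of $f$ in $\mj_f$ are repelling, by postcritical finiteness). Stability of repelling cycles under perturbation, combined with the stability of landing of preperiodic rays---which applies because $\mj_{g_m}$ is locally connected, so its rational rays land as well---gives $\varphi_m(\theta)\to\varphi(\theta)$ for every $\theta\in\rat/\Z$.

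The main obstacle is to upgrade this dense pointwise convergence to uniform convergence by producing a common modulus of continuity for the family $\{\varphi_m\}$. The plan is to build nested dynamical graphs. Let $\Gamma_0$ be the union of the external rays of $f$ of angles $0$, $1/4$, $-1/4$, $1/2$ together with their landing points $\alpha,\beta,\gamma$, and set $\Gamma_N:=f^{-N}(\Gamma_0)$. Since $f$ is strictly postcritically finite with all postcritical points eventually mapping to the repelling fixed point $\alpha$, $f$ is subhyperbolic and the associated hyperbolic orbifold metric on the complement of the postcritical set is uniformly expanded by $f$. Consequently the components of $\C\setminus\Gamma_N$ meeting $\mj_f$ have Euclidean diameter tending to $0$ as $N\to\infty$. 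For $m$ sufficiently large, the rays of $\Gamma_0$ perturb to rays of a graph $\Gamma_0^{(m)}$ for $g_m$ Hausdorff-close to $\Gamma_0$, and pulling back $N$ times yields $\Gamma_N^{(m)}$ Hausdorff-close to $\Gamma_N$ for every fixed $N$ and $m\geq m_N$. The semiconjugacy $\varphi_m(3\theta)=g_m(\varphi_m(\theta))$ forces $\varphi_m$ to send any arc of length at most $3^{-N}$ in $\s^1$ into a single component of $\C\setminus\Gamma_N^{(m)}$, whose diameter is at most the diameter of the corresponding piece of $\C\setminus\Gamma_N$ plus an $o(1)$ error as $m\to\infty$. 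Thus, given $\eps>0$, first choose $N$ so that all $f$-pieces of depth $N$ have diameter $<\eps/2$, then choose $m_0\geq m_N$ so that the corresponding $g_m$-pieces for $m\geq m_0$ have diameter $<\eps$; the finitely many $m<m_0$ are handled individually by the uniform continuity of each $\varphi_m$.

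With equicontinuity in hand, Arzel\`a--Ascoli makes $\{\varphi_m\}$ precompact in $C(\s^1,\C)$. Any uniform subsequential limit must agree with $\varphi$ on the dense set $\rat/\Z$ by the pointwise convergence established above, hence coincides with $\varphi$ throughout $\s^1$. Therefore the full sequence converges uniformly to $\varphi$, as desired.
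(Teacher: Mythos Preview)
Your argument shares the essential ingredient with the paper's---both reduce to showing that the depth-$N$ puzzle pieces of the admissible (hence strictly postcritically finite) polynomial $f$ have diameters tending to zero via the expanding orbifold metric, and that this puzzle structure persists for nearby maps---but there is a technical gap in your construction and an unnecessary detour in your logic.

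The gap: your graph $\Gamma_0$ consists only of external rays and their landing points, so every component of $\C\setminus\Gamma_N$ is unbounded, and the assertion that these components ``have Euclidean diameter tending to $0$'' is false as written. The paper's $\Gamma_f^0$ includes an \emph{equipotential} together with the rays of angles $\pm 1/4$, so that the depth-$m$ puzzle pieces are bounded Jordan domains whose smallness transfers directly to the perturbed $g$-pieces via a homeomorphism $\psi_g$ close to the identity. Without the equipotential you would have to argue instead about the intersection of your unbounded pieces with $\mj_{g_m}$, and then you must explain why those intersections are small---which is not automatic, since you do not yet know that $\mj_{g_m}$ is Hausdorff-close to $\mj_f$. (A second, minor slip: an arc of length $3^{-N}$ in $\s^1$ straddles several rays of $\Gamma_N^{(m)}$, not one, as the ray angles are spaced $1/(4\cdot 3^N)$ apart; this is harmless once you say it correctly.)

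The detour: once the bounded puzzle is in place, the paper proceeds more directly than you do. Since membership of a ray-tail in a given piece is determined by the angle alone, one has $\varphi_f(e^{2\pi i\theta})\in\overline{\mathfrak p}$ if and only if $\varphi_g(e^{2\pi i\theta})\in\psi_g(\overline{\mathfrak p})$, and hence $|\varphi_f(\theta)-\varphi_g(\theta)|<\eps$ uniformly in $\theta$ for $g$ close enough to $f$. This single observation replaces your three-step route through pointwise convergence at rationals, equicontinuity, and Arzel\`a--Ascoli: the same puzzle comparison that you invoke for equicontinuity already pins $\varphi_{g_m}(\theta)$ and $\varphi_f(\theta)$ into nearby small sets and yields uniform convergence in one stroke.
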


\section{Existence of non preperiodic and non precritical branching points\label{sec:prooftheorem3}}

We now prove Theorem \ref{thmA}, assuming Lemmas \ref{lem:convnodal} and \ref{lem:convcaratheodory} and Proposition \ref{prop:key}. 

\subsection{An admissible polynomial}\label{sec:admisspol}

We first prove that there exists an admissible polynomial.   
The polynomial $f:\C\to \C$ defined by 
\[f(z) = z(z-3\omega)^2\]
is monic, fixes $0$, has critical points at $\omega$ and $\omega'=3\omega$ and satisfies $f(\omega')=0$. 
Then, 
\[f(\omega) = 4\omega^3\quad\text{and}\quad f^{\circ 2}(\omega) = 4\omega^3(4\omega^3-3\omega)^2.\]
The equation $f^{\circ 2}(\omega)=3\omega$ has a unique real negative solution which is 
\[\omega:= -\frac{1}{4}\sqrt{6+2\sqrt{9+8\sqrt{3}}}.\]
The graph of $f:\R\to \R$ is shown on Figure \ref{fig:admissible} and the Julia set $\mj_f$ is shown in Figure \ref{fig:suitable}.

The fixed points of $f$ are $\alpha:=0$, $\beta:=3\omega-1$ and $\gamma:=3\omega+1\in \left(\beta, \alpha\right)$. 
The respective multipliers at $\alpha$, $\beta$ and $\gamma$ are $9\omega^2>1$, $3-6\omega>1$ and $3+6\omega<-1$. In particular, the three fixed points are repelling.


\definecolor{ccqqqq}{rgb}{0.8,0.,0.}
\definecolor{uuuuuu}{rgb}{0.26666666666666666,0.26666666666666666,0.26666666666666666}
\definecolor{qqwuqq}{rgb}{0.,0.39215686274509803,0.}

\begin{figure}[htb]
\centerline{
\begin{tikzpicture}[line cap=round,line join=round,>=triangle 45,x=1.5cm,y=1.5cm]
\clip(-5.098308318307095,-4.084167793755014) rectangle (0.8989823231408591,0.4875373673487669);
\draw[line width=1.2pt,color=qqwuqq,smooth,samples=100,domain=-5.098308318307095:0.8989823231408591] plot(\x,{(\x)*((\x)-3.0*(-0.9862072184965908))^(2.0)});
\draw[line width=1.2pt,color=ccqqqq,smooth,samples=100,domain=-5.098308318307095:0.8989823231408591] plot(\x,{(\x)});
\draw (-3.958621655489772,0.)-- (-3.958621655489772,-3.958621655489772);
\draw (-3.958621655489772,-3.958621655489772)-- (0.,-3.958621655489772);
\draw (0.,-3.958621655489772)-- (0.,0.);
\draw (0.,0.)-- (-3.958621655489772,0.);
\draw (-2.958621655489772,0.)-- (-2.9586216554897815,-2.9586216554897815);
\draw (-0.9862072184965908,0.)-- (-0.9862072184965908,-3.836759016017956);
\draw (-1.958621655,0.)-- (-1.958621655,-1.958621655);
\draw (-0.9862072184965908,-3.836759016017956)-- (-3.836759016017956,-3.836759016017956);
\draw (-3.836759016017956,-3.836759016017956)-- (-3.836759016017958,-2.958621655489782);
\draw (-3.836759016017958,-2.958621655489782)-- (-2.9586216554897815,-2.9586216554897815);
\draw (0.06329428293909502,0.16800958727162096) node[anchor=north west] {$\alpha=0$};
\draw (-1.1001145573417923,0.3) node[anchor=north west] {$\omega$};
\draw (-3.0418602978105973,0.36) node[anchor=north west] {$\omega'$};
\draw (-4,0.33) node[anchor=north west] {$f(\omega)$};
\draw (-4.213462158093463,0.2335537472874458) node[anchor=north west] {$\beta$};
\draw (-2.1,0.3) node[anchor=north west] {$\gamma$};
\begin{scriptsize}
\draw [fill=uuuuuu] (-3.958621655489772,0.) circle (1.5pt);
\draw [fill=uuuuuu] (0.,0.) circle (1.5pt);
\draw [fill=uuuuuu] (-2.958621655489772,0.) circle (1.5pt);
\draw [fill=uuuuuu] (-1.958621655,0.) circle (1.5pt);
\draw [fill=uuuuuu] (-0.9862072184965908,0.) circle (1.5pt);
\draw [fill=uuuuuu] (-3.836759016017957,0.) circle (1.5pt);
\end{scriptsize}
\end{tikzpicture}
}
\caption{The graph of the real polynomial $f(z) = z(z-3\omega)^2$.
\label{fig:admissible}}
\end{figure}
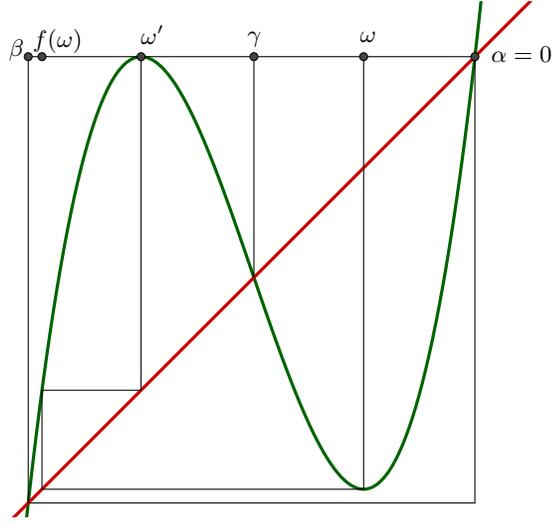

We have that 
\[\beta=3\omega-1 < f(\omega) = 4\omega^3< f^{\circ 2}(\omega) = \omega' = 3\omega<\gamma = 3\omega+1< \omega < 0=\alpha.\]
The intersection of the Julia $\mj_f$ with the real axis is the interval $[\beta,\alpha]$. Since $f$ is a real polynomial, the ray of angle 0 is $(\alpha,+\infty)$ and lands at $\alpha$. The ray of angle $1/2$ is $(-\infty,\beta)$ and lands at $\beta$. Since the multiplier of $\gamma$ is negative, the orbit of a ray landing at $\gamma$ must alternate in between the upper half-plane and the lower half-plane. Since $f(\omega') = \alpha$, the rays of angles $1/3$ and $-1/3$ land at $\omega'$ and separate $\beta$ from $\gamma$. It follows that the digits of  ternary expansion of the rays landing at $\gamma$ alternate between $0$ and $2$: those rays have angle $1/4$ and $-1/4$ and separate the plane in two connected components. The one containing $\alpha$ contains $\omega$ and $f(\omega')$. The one containing $\beta$ contains $\omega'= f^{\circ 2}(\omega)$ and  $f(\omega)$. So,  the polynomial $f$ belongs to $\VV$.

Set $\xi:= \omega$. Then, $\xi$ is precritical to $\omega$, $\omega$ is precritical to $\omega'$ and $\omega'$ is prefixed to $\alpha:=0$. So, Condition \eqref{item:admis1}   in Definition \ref{def:admissible} is satisfied with $j_f=0$. 
In addition, $\xi$ separates $\beta$, $\beta'$ and $\beta''$ in $\mj_f$ (see Figure \ref{fig:omegaseparates}), so that 
Condition \eqref{item:admis2}   in Definition \ref{def:admissible} is satisfied. 

\definecolor{xdxdff}{rgb}{0.49019607843137253,0.49019607843137253,1.}
\definecolor{ffqqqq}{rgb}{1.,0.,0.}
\definecolor{qqqqff}{rgb}{0.,0.,1.}

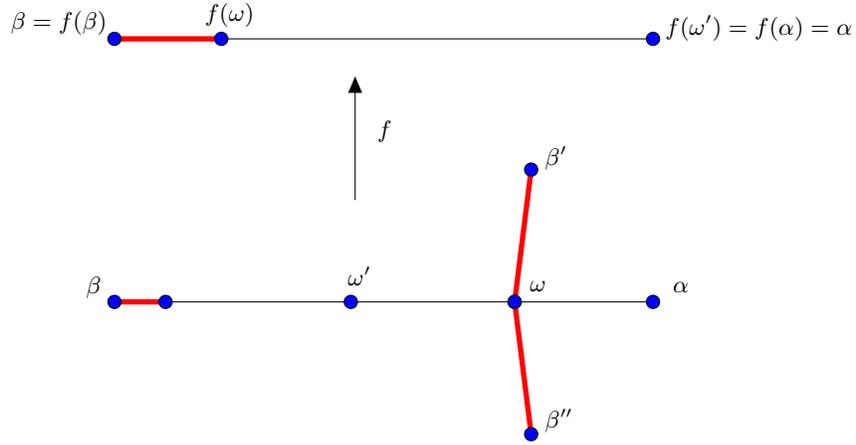
\begin{figure}[htbp]
\centerline{
\begin{tikzpicture}[line cap=round,line join=round,>=triangle 45,x=1.0cm,y=1.0cm]
\draw (7.76,3.5)-- (2.02,3.5);
\draw [line width=2.pt,color=ffqqqq] (2.02,3.5)-- (0.6,3.5);
\draw (7.76,0.)-- (5.92,0.);
\draw [line width=2.pt,color=ffqqqq] (6.14,1.76)-- (5.92,0.);
\draw (5.92,0.)-- (1.28,0.);
\draw [line width=2.pt,color=ffqqqq] (1.28,0.)-- (0.6,0.);
\draw [line width=2.pt,color=ffqqqq] (6.14,-1.76)-- (5.92,0.);
\draw (7.8,3.9350957558601176) node[anchor=north west] {$f(\omega')=f(\alpha)=\alpha$};
\draw (7.9,0.3886265657764875) node[anchor=north west] {$\alpha$};
\draw (-0.9,3.9932345950418164) node[anchor=north west] {$\beta = f(\beta)$};
\draw (0.1,0.46130011475361105) node[anchor=north west] {$\beta$};
\draw [->] (3.8,1.36) -- (3.8,3.);
\draw (3.973382702479108,2.539763615499345) node[anchor=north west] {$f$};
\draw (1.6768985548019981,4.1) node[anchor=north west] {$f(\omega)$};
\draw (6.2,2.2) node[anchor=north west] {$\beta'$};
\draw (6,0.4) node[anchor=north west] {$\omega$};
\draw (3.58094553800264,0.6) node[anchor=north west] {$\omega'$};
\draw (6.2,-1.3) node[anchor=north west] {$\beta''$};
\begin{scriptsize}
\draw [fill=qqqqff] (7.76,3.5) circle (2.5pt);
\draw [fill=qqqqff] (2.02,3.5) circle (2.5pt);
\draw [fill=qqqqff] (0.6,3.5) circle (2.5pt);
\draw [fill=qqqqff] (7.76,0.) circle (2.5pt);
\draw [fill=qqqqff] (5.92,0.) circle (2.5pt);
\draw [fill=qqqqff] (6.14,1.76) circle (2.5pt);
\draw [fill=qqqqff] (1.28,0.) circle (2.5pt);
\draw [fill=qqqqff] (0.6,0.) circle (2.5pt);
\draw [fill=qqqqff] (3.742569832402235,0.) circle (2.5pt);
\draw [fill=qqqqff] (6.14,-1.76) circle (2.5pt);
\draw [fill=qqqqff] (5.92,0.) circle (2.5pt);
\end{scriptsize}
\end{tikzpicture}
}
\caption{A schematic representation of the preimage of $[\beta,\alpha]$ by $f$ illustrating why $\omega$ separates $\beta$, $\beta'$ and $\beta''$ in $\mj_f$\label{fig:omegaseparates}}
\end{figure}

\subsection{A sequence of admissible polynomials}

We now build a Cauchy sequence of admissible polynomials. Given two maps $f:X\to \C$ and $g:X\to \C$ defined on some set $X\subseteq \C$, we denote by $d_X(f,g)$ the spherical distance between $f$ and $g$:  
\[d_X(f,g):= \sup_{x\in X} \frac{\bigl|f(x)-g(x)|}{\sqrt{1+|f(x)|^2}\sqrt{1+|g(x)|^2}}.\]

Let $f$ be an admissible polynomial and let $\xi_f$ be the associated branching point. 
According to Definition~\ref{def:admissible}, the points 
\[\xi_f,f(\xi_f),\ldots,f^{\circ (j_f-1)}(\xi_f)\]
are not critical points of $f$ and are pairwise distinct. For each integer $j$, the map $\mathfrak{V}\times \C\rightarrow \C: (g,z)\mapsto g^{\circ j}(z)$ is continuous at $(f,\xi_f)$ and the critical points of $g$ depend continuously on $g$. Thus, there exists $\eps_f>0$ such that for $|z-\xi_f|<\eps_f$ and $d_\C(f,g)<\eps_f$, the points 
\[z,g(z),\ldots,g^{\circ (j_f-1)}(z)\]
are not critical points of $f$ and are pairwise distinct. 

According to Proposition \ref{prop:key}, there is a sequence $\{g_m\}$ of admissible polynomials which converges to $f$, such that $\{\xi_{g_m}\}$ converges to $\xi_f$ and  $j_{g_m}>j_f$ for all $m$. According to Lemma \ref{lem:convcaratheodory}, the sequence of Carathéodory loops $\varphi_{g_m}$ of $\mj_{g_m}$ converges uniformly to the Carathéodory loop $\varphi_f$ of $\mj_f$. So, for all $\vareps>0$, if $m$ is large enough, then 
\[|\xi_{g_m}-\xi_f|<\vareps,\quad d_\C(f,g_m)<\vareps\quad\text{and}\quad d_{\s^1}(\varphi_f,\varphi_{g_m})<\vareps.\]

Let us now define recursively a sequence $\{f_n\}$ of admissible polynomials  and a sequence $\{\vareps_n\}$ of positive numbers  as follows. We let $f_0$ be any admissible polynomial and set $\vareps_0:=\eps_{f_0}/2$. Once $f_n$ and $\vareps_n$ are defined, we let $f_{n+1}$ be an admissible polynomial such that $j_{f_{n+1}}>j_{f_n}$, 
\[|\xi_{f_{n+1}}-\xi_{f_n}|<\vareps_n,\quad d_\C(f_n,f_{n+1})<\vareps_n\quad\text{and}\quad   d_{\s^1}(\varphi_{f_n},\varphi_{f_{n+1}})<\vareps_n.\]
We then set 
\[\vareps_{n+1} = \frac{1}{2} \min \bigl( \vareps_n,\eps_{f_{n+1}}\bigr).\]

Figures \ref{fig:sequenceperturbations} and \ref{fig:sequenceperturbationszoom} illustrate the Julia sets of $f_0$, $f_1$, $f_2$, $f_3$ for such a sequence.   Observe that for all $n\geq 0$ and all $p\geq 0$, $\vareps_{n+p}\leq \vareps_n/2^p\leq \eps_{f_n}/2^{p+1}$. As a consequence, 
\[|\xi_{f_{n+p}}-\xi_{f_n}|<2\vareps_n\leq  \eps_{f_n},\quad d_\C(f_n,f_{n+p})<2\vareps_n\leq\eps_{f_n}\]
and
\[d_{\s^1}(\varphi_{f_n},\varphi_{f_{n+p}})<2\vareps_n\leq\eps_{f_n}.\]
In particular, the sequences $\{f_n\}$,  $\{\xi_{f_n}\}$, and $\{\varphi_{f_n}\}$ are Cauchy sequences.

\subsection{Proof of Theorem~\ref{thmA}} 

The sequences $\{f_n\}$,  $\{\xi_{f_n}\}$, and $\{\varphi_{f_n}\}$ converge to $f$, $\xi$ and $\varphi$. We now prove that $\xi$ is a wandering non-precritical branching point of $f$.

\begin{figure}[htbp]
    \centering
 
       \subfigure[\scriptsize{$f_0(z)=8.7534421003338z+5.9172433109798z^2+z^3$, $j=0,$ $k=2$, $\ell=1$.}  ]{
	\def\svgwidth{350pt}
	 \setlength{\unitlength}{\svgwidth}
	    \begin{picture}(1,0.16285714)%
    \put(0,0){\includegraphics[width=\unitlength,page=1]{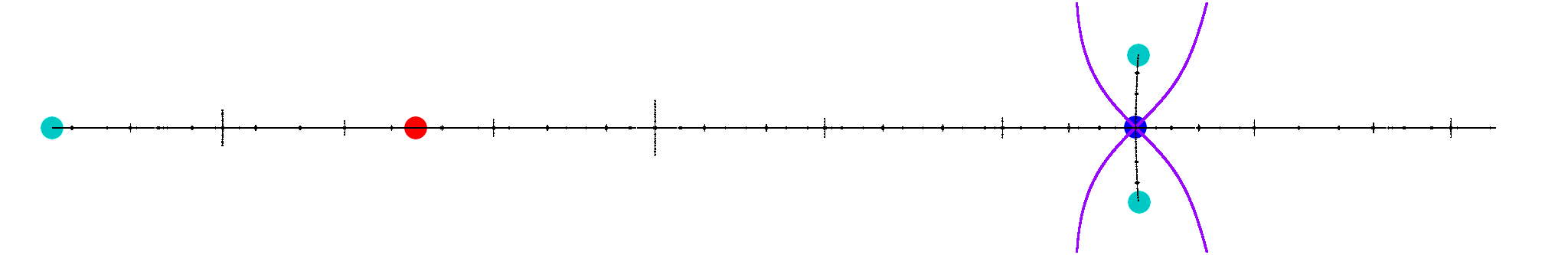}}%
    \put(0.02806122,0.09714286){\color[rgb]{0,0,0}\makebox(0,0)[lb]{\smash{$\beta_0$}}}%
     \put(0.25806122,0.09714286){\color[rgb]{0,0,0}\makebox(0,0)[lb]{\smash{$\omega'_0$}}}%
      \put(0.68306122,0.08614286){\color[rgb]{0,0,0}\makebox(0,0)[lb]{\smash{$\omega_0$}}}%
    \put(0.71598637,0.14489796){\color[rgb]{0,0,0}\makebox(0,0)[lb]{\smash{$\beta'_0$}}}%
    \put(0.72713375,0.01109456){\color[rgb]{0,0,0}\makebox(0,0)[lb]{\smash{$\beta''_0$}}}%
  \end{picture}%
   }
    \hspace{0.1in}
  
    \subfigure[\scriptsize{$f_1(z)=(8.6656058283165+0.059672002492800{\rm i})z+(5.8731379216063+0.020430827270432{\rm i})z^2+z^3$, $j=2,$ $k=2$, $\ell=3$.}  ]{
   	\def\svgwidth{350pt}
	 \setlength{\unitlength}{\svgwidth}
	    \begin{picture}(1,0.16285714)%
    \put(0,0){\includegraphics[width=\unitlength,page=1]{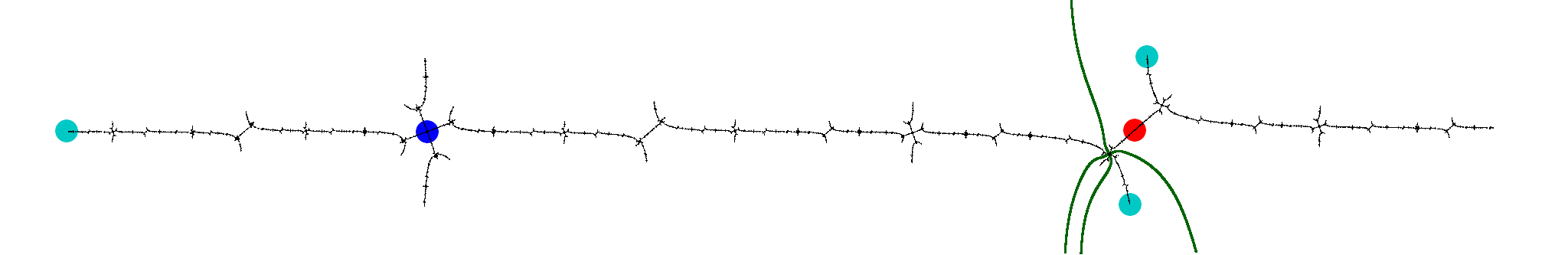}}%
    \put(0.02806122,0.093){\color[rgb]{0,0,0}\makebox(0,0)[lb]{\smash{$\beta_1$}}}%
     \put(0.23,0.085){\color[rgb]{0,0,0}\makebox(0,0)[lb]{\smash{$\omega_1$}}}%
      \put(0.705,0.095){\color[rgb]{0,0,0}\makebox(0,0)[lb]{\smash{$\omega'_1$}}}%
    \put(0.71598637,0.14489796){\color[rgb]{0,0,0}\makebox(0,0)[lb]{\smash{$\beta'_1$}}}%
    \put(0.723,0.008){\color[rgb]{0,0,0}\makebox(0,0)[lb]{\smash{$\beta''_1$}}}%
  \end{picture}%
   }
    \hspace{0.1in}
     \subfigure[\scriptsize{ $f_2(z)=(8.6620018002588+0.049185458993292{\rm i})z+(5.871351730126+0.017466126249776{\rm i})z^2+z^3$, $j=4,$ $k=5$, $\ell=5$.}  ]{
     	\def\svgwidth{350pt}
	 \setlength{\unitlength}{\svgwidth}
	    \begin{picture}(1,0.16285714)%
    \put(0,0){\includegraphics[width=\unitlength,page=1]{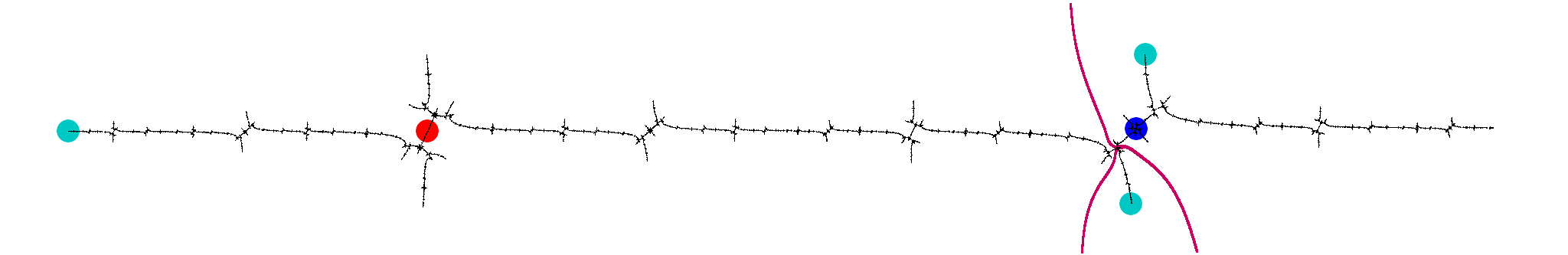}}%
    \put(0.02806122,0.093){\color[rgb]{0,0,0}\makebox(0,0)[lb]{\smash{$\beta_2$}}}%
     \put(0.23,0.085){\color[rgb]{0,0,0}\makebox(0,0)[lb]{\smash{$\omega'_2$}}}%
      \put(0.70,0.095){\color[rgb]{0,0,0}\makebox(0,0)[lb]{\smash{$\omega_2$}}}%
    \put(0.71598637,0.14489796){\color[rgb]{0,0,0}\makebox(0,0)[lb]{\smash{$\beta'_2$}}}%
    \put(0.723,0.008){\color[rgb]{0,0,0}\makebox(0,0)[lb]{\smash{$\beta''_2$}}}%
  \end{picture}%
   }
    \hspace{0.1in}
     \subfigure[\scriptsize{ $f_3(z)=(8.6620495410606+0.049156312358058{\rm i})z+(5.871375113635+0.017446586001088{\rm i})z^2+z^3$, $j=9,$ $k=8$, $\ell=10$.}  ]{
       	\def\svgwidth{350pt}
	 \setlength{\unitlength}{\svgwidth}
	    \begin{picture}(1,0.16285714)%
    \put(0,0){\includegraphics[width=\unitlength,page=1]{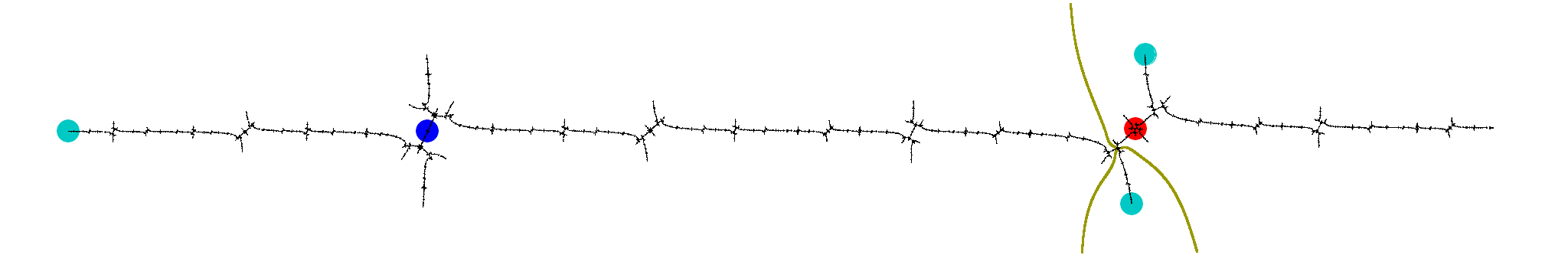}}%
    \put(0.02806122,0.093){\color[rgb]{0,0,0}\makebox(0,0)[lb]{\smash{$\beta_3$}}}%
     \put(0.23,0.085){\color[rgb]{0,0,0}\makebox(0,0)[lb]{\smash{$\omega_3$}}}%
      \put(0.70,0.097){\color[rgb]{0,0,0}\makebox(0,0)[lb]{\smash{$\omega'_3$}}}%
    \put(0.71598637,0.14489796){\color[rgb]{0,0,0}\makebox(0,0)[lb]{\smash{$\beta'_3$}}}%
    \put(0.723,0.008){\color[rgb]{0,0,0}\makebox(0,0)[lb]{\smash{$\beta''_3$}}}%
  \end{picture}%
   }
    \caption{A sequence of 3 perturbations of the polynomial $f_0$ introduced in \S\ref{sec:admisspol}. We draw the external rays landing at the corresponding branching points $\xi$. In all figures $f^{\circ j}(\xi)=\omega$, $f^{\circ k}(\omega)=\omega'$ and $f^{\circ \ell}(\omega')=0$.	\label{fig:sequenceperturbations}}
    \end{figure}

    \begin{figure}[htbp]
    \centering
    \subfigure[\scriptsize{$f_0$}  ]{
    \def\svgwidth{160pt}
	 \setlength{\unitlength}{\svgwidth}
	  \begin{picture}(1,1)%
    \put(0,0){\includegraphics[width=\unitlength,page=1]{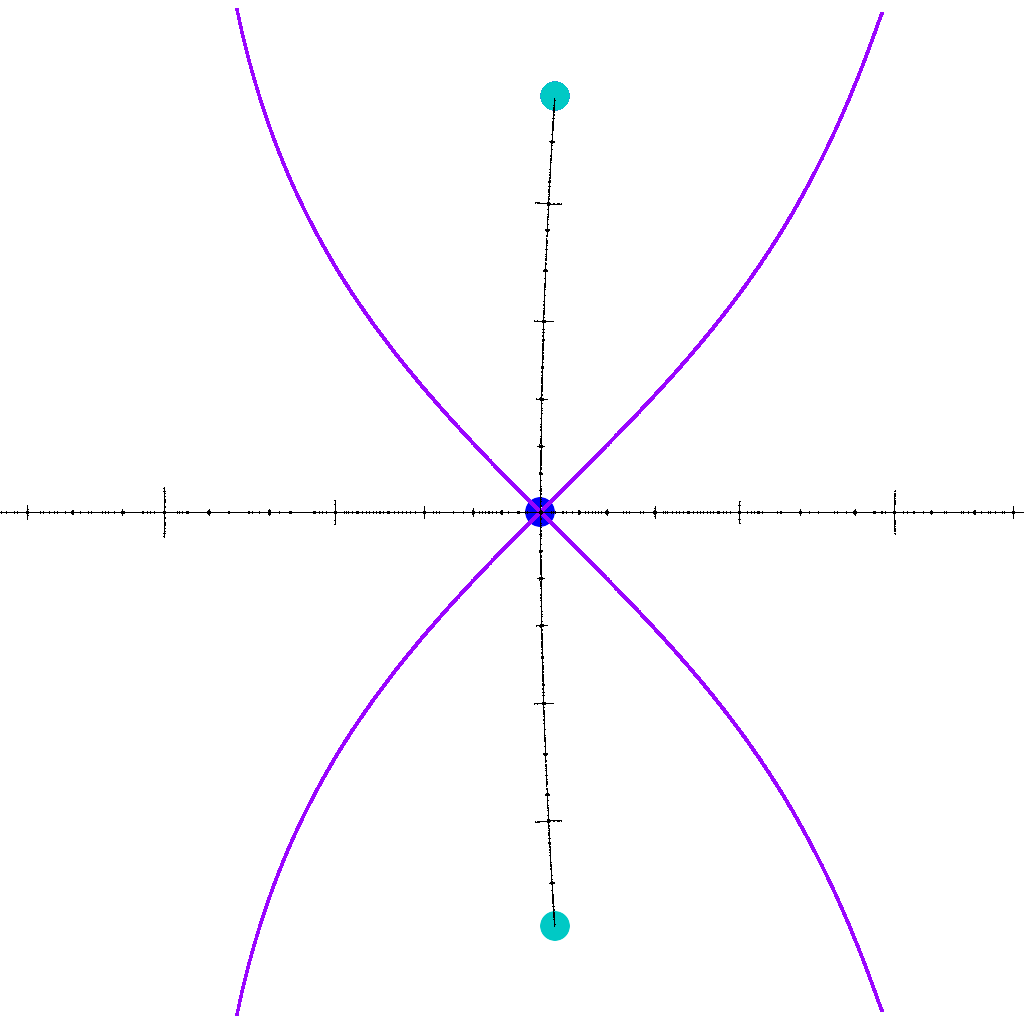}}%
    \put(0.565,0.91525424){\color[rgb]{0,0,0}\makebox(0,0)[lb]{\smash{$\beta'_0$}}}%
        \put(0.44,0.51){\color[rgb]{0,0,0}\makebox(0,0)[lb]{\smash{$\omega_0$}}}%
    \put(0.565,0.07){\color[rgb]{0,0,0}\makebox(0,0)[lb]{\smash{$\beta''_0$}}}%
  \end{picture}%
   }
    \hspace{0.1in}
    \subfigure[\scriptsize{$f_1$}  ]{
    \def\svgwidth{160pt}
	 \setlength{\unitlength}{\svgwidth}
	  \begin{picture}(1,1)%
    \put(0,0){\includegraphics[width=\unitlength,page=1]{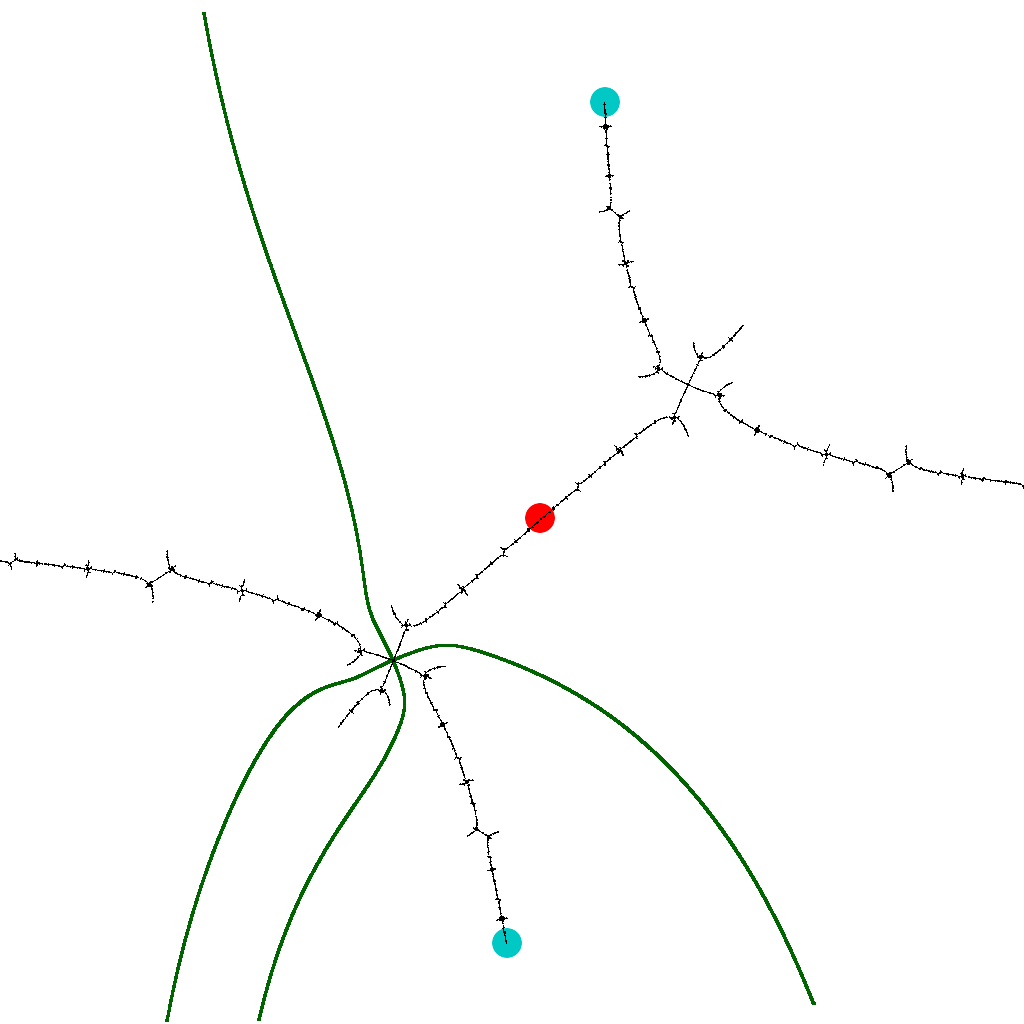}}%
    \put(0.61,0.91525424){\color[rgb]{0,0,0}\makebox(0,0)[lb]{\smash{$\beta'_1$}}}%
        \put(0.44,0.51){\color[rgb]{0,0,0}\makebox(0,0)[lb]{\smash{$\omega'_1$}}}%
    \put(0.515,0.055){\color[rgb]{0,0,0}\makebox(0,0)[lb]{\smash{$\beta''_1$}}}%
  \end{picture}%
   }
    \hspace{0.1in}
     \subfigure[\scriptsize{ $f_2$}  ]{
    \def\svgwidth{160pt}
	 \setlength{\unitlength}{\svgwidth}
	  \begin{picture}(1,1)%
    \put(0,0){\includegraphics[width=\unitlength,page=1]{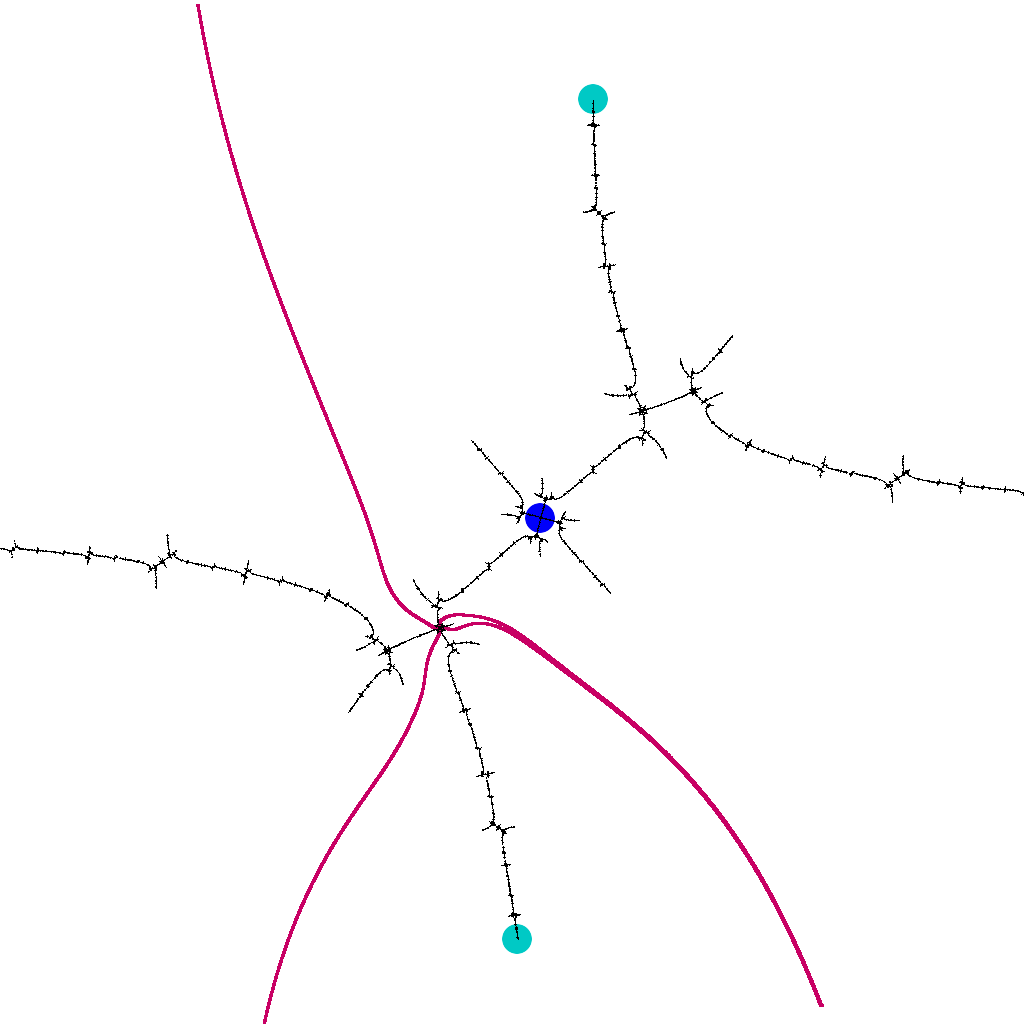}}%
    \put(0.605,0.915){\color[rgb]{0,0,0}\makebox(0,0)[lb]{\smash{$\beta'_2$}}}%
        \put(0.42,0.49){\color[rgb]{0,0,0}\makebox(0,0)[lb]{\smash{$\omega_2$}}}%
    \put(0.525,0.055){\color[rgb]{0,0,0}\makebox(0,0)[lb]{\smash{$\beta''_2$}}}%
  \end{picture}%
   }
    \hspace{0.1in}
     \subfigure[\scriptsize{ $f_3$}  ]{
       \def\svgwidth{160pt}
	 \setlength{\unitlength}{\svgwidth}
	  \begin{picture}(1,1)%
    \put(0,0){\includegraphics[width=\unitlength,page=1]{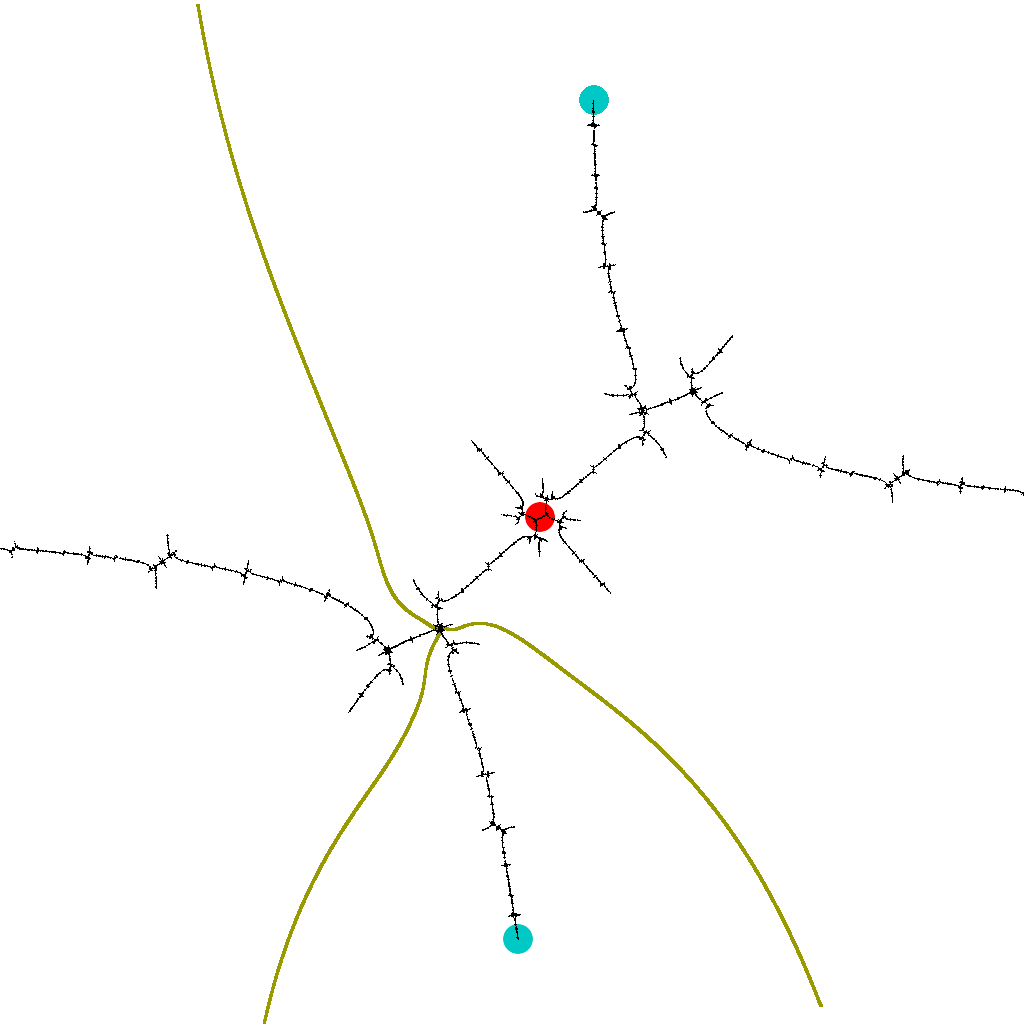}}%
    \put(0.605,0.915){\color[rgb]{0,0,0}\makebox(0,0)[lb]{\smash{$\beta'_3$}}}%
        \put(0.42,0.49){\color[rgb]{0,0,0}\makebox(0,0)[lb]{\smash{$\omega'_3$}}}%
    \put(0.525,0.055){\color[rgb]{0,0,0}\makebox(0,0)[lb]{\smash{$\beta''_3$}}}%
  \end{picture}%
   }
	
    \caption{Zooms in Figure~\ref{fig:sequenceperturbations}.\label{fig:sequenceperturbationszoom} }
    \end{figure}

Let $\beta_n$, $\beta'_n$, $\beta''_n$  be the landing points of the rays of angles $1/2$, $1/6$ and $-1/6$ for $f_n$, so that 
\begin{itemize}
\item $f_n^{-1}(\beta_n) = \{\beta_n,\beta'_n,\beta''_n\}$ and 
\item $\xi_n$ is the nodal point of $\beta_n$, $\beta'_n$ and $\beta''_n$ in $\mj_{f_n}$. 
\end{itemize}
The sequences $\{\beta_n\}$, $\{\beta'_n\}$ and $\{\beta''_n\}$ converge to the landing points $\beta$, $\beta'$ and $\beta''$ of the rays of angles $1/2$, $1/6$ and $-1/6$ for $f$. According to Lemma \ref{lem:convnodal}, $\mj:=\varphi(\s^1)$ is a dendrite with Carathéodory loop $\varphi$ and $\xi$ is the nodal point of $\beta$, $\beta'$ and $\beta''$ in $\mj$. 

We claim that $\mj_f=\mj$. Indeed, the critical orbits of $f$ are approximated by the critical orbits of $f_n$. So, they are bounded and $\mj_f$ is connected. According to \cite{Douady2},  
\[\mj_f\subseteq \mj:=\lim_{n\to +\infty} \mj_{f_n}\subseteq \mk_f.\] 
Since $\mj$ is a dendrite and $\mj_f\subseteq \mj$ is connected, $\mj_f$ itself is a dendrite. It follows that $\mk_f = \mj_f = \mj$. 

We claim that $\xi$ is neither preperiodic nor precritical for $f$, so that it cannot coincide with $\beta$, $\beta'$ or $\beta''$. As a consequence, $\xi$ is a branching point which is neither preperiodic nor precritical, as required. 

To prove that $\xi$ is neither preperiodic nor precritical, we use that for all $n$, 
\[|\xi-\xi_{f_n}|< \eps_{f_n}\quad\text{and}\quad d_\C(f,f_n)<\eps_{f_n}.\]
By definition of $\eps_{f_n}$, the points 
\[\xi,f(\xi),\ldots,f^{\circ (j_{f_n}-1)}(\xi)\]
are not critical points of $f$ and are pairwise distinct. Since the sequence $\{j_{f_n}\}$ is increasing, it takes arbitrarily large values. Therefore, all the points in the $f$-orbit of $\xi$ are not critical points of $f$ and are pairwise distinct. 

This completes the proof of Theorem \ref{thmA} assuming Lemmas \ref{lem:convnodal} and \ref{lem:convcaratheodory} and Proposition \ref{prop:key}.

\section{Convergence of nodal points\label{sec:nodal}}

Here, we prove Lemma \ref{lem:convnodal}. 
Let $\{\mj_n\}$ be a sequence of dendrites. Assume the associated sequence $\{\varphi_n:\s^1\to \mj_n\}$ of Carathéodory loops converges uniformly to some non constant map $\varphi:\s^1\to \C$. 

We first prove that $\mj:=\varphi(\s^1)$ is a dendrite. Since $\varphi$ is continuous, $\mj$ is connected and locally connected. 
The Carathéodory loop $\varphi_n:\s^1\to \mj_n$ is the boundary value of a  continuous map $\phi_n:\C\setminus \D\to \C$ which is univalent in $\C\setminus \overline \D$. The sequence  $\{\varphi_n:\s^1\to \mj_n\}$ converges uniformly to some $\varphi:\s^1\to \C$; according to the Maximum Modulus Principle, the sequence 
$\{\phi_n:\C\setminus \D \to \C\}$ converges uniformly to some $\phi:\C\setminus \D\to \C$. As a non constant limit of univalent maps, $\phi:\C\setminus \overline \D\to \C$ is univalent and $\mj = \partial U$ with $U:=\phi (\C\setminus \overline \D)$. In particular, $\mj$ has empty interior. Since the Julia set $\mj_n$ has empty interior, the maps $\phi_n:\C\setminus \D\to \C$ are surjective, and so, the limit $\phi:\C\setminus \D\to \C$ is surjective. Indeed, if $w\in \C$ and $\phi_n(z_n)=w$, then $\phi(z)=w$ for any limit value $z$ of the sequence $\{z_n\}$. It follows that $\C = U\sqcup \mj$. Therefore, $\mj$ contains no simple closed loop since, otherwise, either $U=\com\setminus\mj$ is not connected, or $\mj$ has non-empty interior.  


We now assume $\{\beta_n\}$, $\{\beta'_n\}$ and $\{\beta''_n\}$ are sequences of points in $\mj_n$ converging to $\beta$, $\beta'$ and $\beta''$ in $\mj$.   We must prove that the sequence of nodal points of $\beta_n$, $\beta'_n$ and $\beta''_n$ in $\mj_n$ converges to the nodal point of $\beta$, $\beta'$ and $\beta''$ in $\mj$. This is based on the following lemma. 

Given two distinct points $\theta$ and $\theta'$ in $\T$, we set 
\[[\theta,\theta']_\T :=\{t\in \s^1~;~\theta,~t\text{ and }\theta'\text{ are in counterclockwise order on }\T\}.\]

\begin{lemma}\label{lem:segment}
Let $\mj\subset \C$ be a dendrite with Carathéodory loop $\varphi :\s^1\to \mj$. Let $\theta$ and $\theta'$ be two distinct points in $\s^1$. Then, 
\[\bigl[\varphi(\theta),\varphi(\theta')\bigr]_{\mj} = \varphi\bigl([\theta,\theta']_\T\bigr) \cap \varphi\bigl([\theta',\theta]_\T\bigr) .\]
\end{lemma}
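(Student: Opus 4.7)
The plan is to establish both inclusions separately, using the conformal representation $\phi:\widehat{\C}\setminus\overline\D\to\widehat{\C}\setminus\mj$ (whose boundary values recover the Carathéodory loop $\varphi$) together with the fact that a dendrite is uniquely arcwise connected.

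For the inclusion $[\varphi(\theta),\varphi(\theta')]_\mj \subseteq \varphi([\theta,\theta']_\T) \cap \varphi([\theta',\theta]_\T)$, I would first record the elementary observation that in any dendrite, every connected subset containing two points $a$ and $b$ contains the arc $[a,b]_\mj$: if some $x$ on the arc were missing, removing $x$ would disconnect $\mj$, and hence the subset, into pieces containing $a$ and $b$ separately. Applying this to $\varphi([\theta,\theta']_\T)$ and $\varphi([\theta',\theta]_\T)$, each of which is a continuous image of a connected arc containing both $\varphi(\theta)$ and $\varphi(\theta')$, gives the inclusion for each set and hence for their intersection.

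For the reverse inclusion, I would fix $z$ in the intersection. The case $z \in \{\varphi(\theta),\varphi(\theta')\}$ is immediate, so suppose $z$ is different from both. I pick $t$ in the interior of $[\theta,\theta']_\T$ and $s$ in the interior of $[\theta',\theta]_\T$ with $\varphi(t)=\varphi(s)=z$, and form the Jordan curve
\[\Gamma := R_t \cup R_s \cup \{z,\infty\} \subset \widehat{\C},\]
where $R_t := \phi\bigl(\{r e^{2\pi{\rm i}t}:r>1\}\bigr)$ and $R_s := \phi\bigl(\{r e^{2\pi{\rm i}s}:r>1\}\bigr)$ are the two external rays, both landing at $z$. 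Since $R_t, R_s \subset \C\setminus\mj$ and $\infty \notin \C$, the intersection $\Gamma \cap \mj$ equals exactly $\{z\}$.

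The Jordan curve $\Gamma$ divides $\widehat{\C}$ into two open Jordan domains $D_1, D_2$. Because the cyclic order on $\s^1$ is $\theta, t, \theta', s$, the angles $\theta$ and $\theta'$ lie in different components of $\s^1\setminus\{t,s\}$, so by conformality of $\phi$ on $\widehat{\C}\setminus\overline\D$ the rays $R_\theta$ and $R_{\theta'}$ lie in different components of $\widehat{\C}\setminus\Gamma$. Their landing points $\varphi(\theta), \varphi(\theta')$ then lie in the corresponding closed components, and since they belong to $\mj\setminus\{z\}$ and hence avoid $\Gamma$, they actually lie in the distinct open components $D_1$ and $D_2$. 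The arc $[\varphi(\theta),\varphi(\theta')]_\mj\subset \mj$ is then a connected set joining points across $\Gamma$, so it must meet $\Gamma$; since $\Gamma\cap\mj=\{z\}$, the meeting point is $z$, proving $z\in[\varphi(\theta),\varphi(\theta')]_\mj$. The main obstacle is cleanly justifying that the landing points lie in the open (not merely closed) components of $\widehat{\C}\setminus\Gamma$, which is precisely why the degenerate case $z\in\{\varphi(\theta),\varphi(\theta')\}$ must be excluded at the outset.
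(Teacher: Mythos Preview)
Your proof is correct and follows essentially the same approach as the paper: both inclusions are argued in the same way, with the forward inclusion using that connected subsets of a dendrite containing two points must contain the arc between them, and the reverse inclusion using that two external rays landing at a common point $z$ separate $\varphi(\theta)$ from $\varphi(\theta')$, forcing the arc $[\varphi(\theta),\varphi(\theta')]_\mj$ to pass through $z$. Your version is slightly more careful in explicitly isolating the degenerate case $z\in\{\varphi(\theta),\varphi(\theta')\}$ and in spelling out the Jordan curve argument, whereas the paper's proof handles this more tersely.
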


\begin{proof}
Let ${\cal S}:=\bigl[\varphi(\theta),\varphi(\theta')\bigr]_{\mj}$ be the arc joining $\varphi(\theta)$ and $\varphi(\theta')$ in $\mj$. 
Since $\varphi$ is continuous, $\varphi\bigl([\theta,\theta']_\T\bigr)$ and $\varphi\bigl([\theta',\theta]_\T\bigr)$ are connected. Since they both contain $\varphi(\theta)$ and $\varphi(\theta')$, 
\[ {\cal S}\subseteq \varphi\bigl([\theta,\theta']_\T\bigr) \cap \varphi\bigl([\theta',\theta]_\T\bigr).\]
Now, assume $\xi\in  \varphi\bigl([\theta,\theta']_\T\bigr) \cap \varphi\bigl([\theta',\theta]_\T\bigr) $, i.e., $\xi=\varphi(t) = \varphi(t')$ with $t\in [\theta,\theta']_\T$ and $t'\in [\theta',\theta]_\T$. The rays of angles $t$ and $t'$ separate $\mj$   in two connected components: one contains $\varphi(\theta)$ and the other contains $\varphi(\theta')$. Thus, $\varphi(\theta)$ and $\varphi(\theta')$ are in distinct connected components of $\mj\setminus \{\xi\}$. 
Since ${\cal S}$ is an arc joining $\varphi(\theta)$ and $\varphi(\theta')$ in $\mj$, we necessarily have $\xi\in {\cal S}$. Therefore, 
\[ \varphi\bigl([\theta,\theta']_\T\bigr) \cap \varphi\bigl([\theta',\theta]_\T\bigr)\subseteq {\cal S}.\qedhere\]
\end{proof}

\begin{corollary}\label{coro:nodal}
Let $\mj\subset \C$ be a dendrite with Carathéodory loop $\varphi :\s^1\to \mj$. Let $\theta$, $\theta'$ and $\theta''$ be three distinct points in $\s^1$. Then, the nodal point of $\varphi(\theta)$, $\varphi(\theta')$ and $\varphi(\theta'')$ in $\mj$ is 
\[\varphi\bigl([\theta,\theta']_\T\bigr) \cap \varphi\bigl([\theta',\theta'']_\T\bigr)\cap \varphi\bigl([\theta'',\theta]_\T\bigr) .\]
\end{corollary}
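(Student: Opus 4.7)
The plan is to combine Lemma \ref{lem:segment} (applied three times, to the three pairs among $\{\theta,\theta',\theta''\}$) with the definition of the nodal point as the unique point lying simultaneously on the three arcs $[\varphi(\theta),\varphi(\theta')]_{\mj}$, $[\varphi(\theta'),\varphi(\theta'')]_{\mj}$, $[\varphi(\theta''),\varphi(\theta)]_{\mj}$. After a harmless relabelling, I may assume $\theta$, $\theta'$, $\theta''$ appear in counterclockwise order on $\s^1$, so that
\[[\theta',\theta]_\T = [\theta',\theta'']_\T \cup [\theta'',\theta]_\T,\quad [\theta'',\theta']_\T = [\theta'',\theta]_\T \cup [\theta,\theta']_\T,\quad [\theta,\theta'']_\T = [\theta,\theta']_\T \cup [\theta',\theta'']_\T.\]
Write $A_1:=[\varphi(\theta),\varphi(\theta')]_{\mj}$, $A_2:=[\varphi(\theta'),\varphi(\theta'')]_{\mj}$, $A_3:=[\varphi(\theta''),\varphi(\theta)]_{\mj}$, and denote by $I$ the intersection appearing on the right hand side of the statement. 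By definition, the nodal point $\xi$ is the unique point in $A_1\cap A_2\cap A_3$; the goal is to show $I = A_1\cap A_2\cap A_3$.

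For the inclusion $A_1\cap A_2\cap A_3 \subseteq I$, I use Lemma \ref{lem:segment} in the form $A_1 \subseteq \varphi([\theta,\theta']_\T)$, $A_2 \subseteq \varphi([\theta',\theta'']_\T)$, $A_3 \subseteq \varphi([\theta'',\theta]_\T)$, and take the intersection. For the reverse inclusion, suppose $\eta \in I$, so $\eta = \varphi(t_1) = \varphi(t_2) = \varphi(t_3)$ with $t_1 \in [\theta,\theta']_\T$, $t_2 \in [\theta',\theta'']_\T$, $t_3 \in [\theta'',\theta]_\T$. Using the three decompositions above, I get $t_2 \in [\theta',\theta]_\T$, hence $\eta\in\varphi([\theta,\theta']_\T)\cap \varphi([\theta',\theta]_\T) = A_1$ by Lemma \ref{lem:segment}; symmetrically, $t_3 \in [\theta'',\theta']_\T$ gives $\eta \in A_2$, and $t_1 \in [\theta,\theta'']_\T$ gives $\eta \in A_3$.

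Therefore $I = A_1\cap A_2\cap A_3 = \{\xi\}$, which is the desired identification. There is essentially no obstacle here beyond book-keeping the cyclic order of $\theta$, $\theta'$, $\theta''$ on $\s^1$ in order to apply the three decompositions correctly; everything else is an immediate consequence of Lemma \ref{lem:segment} and the definition of a nodal point.
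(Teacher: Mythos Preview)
Your proof is correct and is exactly the argument the paper intends: the corollary is stated without proof, as an immediate consequence of Lemma~\ref{lem:segment}, and your application of that lemma to the three pairs together with the cyclic decompositions of the complementary arcs is the natural way to spell it out.
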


If the three points $\beta_n$, $\beta'_n$ and $\beta''_n$ are not distinct for infinitely many $n$, then the nodal point in $\mj_n$ coincides with the corresponding multiple point, and Lemma \ref{lem:convnodal} follows easily. So, without loss of generality, assume the points are distinct and let $\theta_n$, $\theta'_n$ and $\theta''_n$ be three distinct points in $\s^1$ with $\varphi_n(\theta_n) = \beta_n$, $\varphi_n(\theta'_n) = \beta'_n$ and $\varphi_n(\theta''_n) = \beta''_n$. 
Let $\xi$ be a limit value of the sequence $\{\xi_n\}$. We must show that $\xi$ is the nodal point of $\beta$, $\beta'$ and $\beta''$ in $\mj$. 

Extracting a subsequence and reordering the points if necessary, we may assume that $\theta_n$, $\theta'_n$ and $\theta''_n$ are in counterclockwise order on $\s^1$. 
Let $\xi_n$ be the nodal points of $\beta_n$, $\beta'_n$ and $\beta''_n$ in $\mj_n$. According to Lemma \ref{lem:segment}, there are points 
\[t_n\in [\theta_n,\theta'_n]_\T,\quad t'_n\in [\theta'_n,\theta''_n]_\T\quad\text{and}\quad  t''_n\in [\theta''_n,\theta_n]_\T\]
with 
\[\xi_n = \varphi_n(t_n) = \varphi_n(t'_n) = \varphi_n(t''_n).\]
Extracting a further subsequence if necessary, we may assume that $\{\xi_n\}$, $\{\theta_n\}$, $\{\theta'_n\}$, $\{\theta''_n\}$, $\{t_n\}$, $\{t'_n\}$ and $\{t''_n\}$ converge to $\xi$, $\theta$, $\theta'$, $\theta''$, $t$, $t'$ and $t''$. Since $\{\varphi_n\}$ converges uniformly to $\varphi$, we have that 
\[\varphi(\theta) = \beta,\quad \varphi(\theta') = \beta',\quad \varphi(\theta'') = \beta''\quad\text{and}\quad \varphi(t) = \varphi(t') = \varphi(t'') = \xi.\]
 If $\theta$, $\theta'$ and $\theta''$ are not distinct, let us say $\theta = \theta'$, then $t = \theta = \theta'$ and $\xi = \beta = \beta'$ is the nodal point of $\beta$, $\beta'$ and $\beta''$ in $\mj$.
 Otherwise, if $\theta$, $\theta'$ and $\theta''$ are distinct, $t\in [\theta,\theta']_\T$, $t'\in [\theta',\theta'']_\T$ and $t''\in [\theta'',\theta]_\T$; the proof follows from  Corollary \ref{coro:nodal}.

\section{Convergence of Carathéodory loops\label{sec:convcar}}

In this section, we prove Lemma \ref{lem:convcaratheodory}. Our proof relies on puzzle techniques. 

\subsection{The puzzle of an admissible polynomial}

In the whole section, $f\in \VV$ is an admissible polynomial. We denote by $\phi_f:\C\setminus \overline\D\to \C\setminus \mj_f$ the B\"ottcher coordinate conjugating $z\mapsto z^3$ to $f$, with $\phi_f(z)/z\to 1$ as $z\to\infty$. This B\"ottcher coordinate extends as a continuous map $\phi_f:\C\setminus \D\to \C$ and the restriction to $\s^1$ is the Carathéodory loop $\varphi_f:\s^1\to \mj_f$. 

Let $\Gamma_f^0$ be the union of the equipotential $\bigl\{\phi_f(2{\rm e}^{2\pi {\rm i}\theta})~;~\theta\in \T\bigr\}$, the external rays of angles $1/4$ and $-1/4$ and their landing point $\gamma$. For $m\geq 0$, set 
\[\Gamma^m_f := f^{-m}(\Gamma_f^0).\]

\begin{definition}
The puzzle pieces of depth $m\geq 0$ are the bounded connected components of $\C\setminus \Gamma_f^m$. If $z\in \mj_f$ is not an iterated preimage of $\gamma$, we denote by $\p_m(z)$ the puzzle piece of depth $m$ which contains $z$. 
\end{definition}

\begin{figure}[htb]
\centerline{
\setlength{\unitlength}{1cm}
\begin{picture}(10,6)(0,0)
\put(0,0){{\includegraphics[width=10cm]{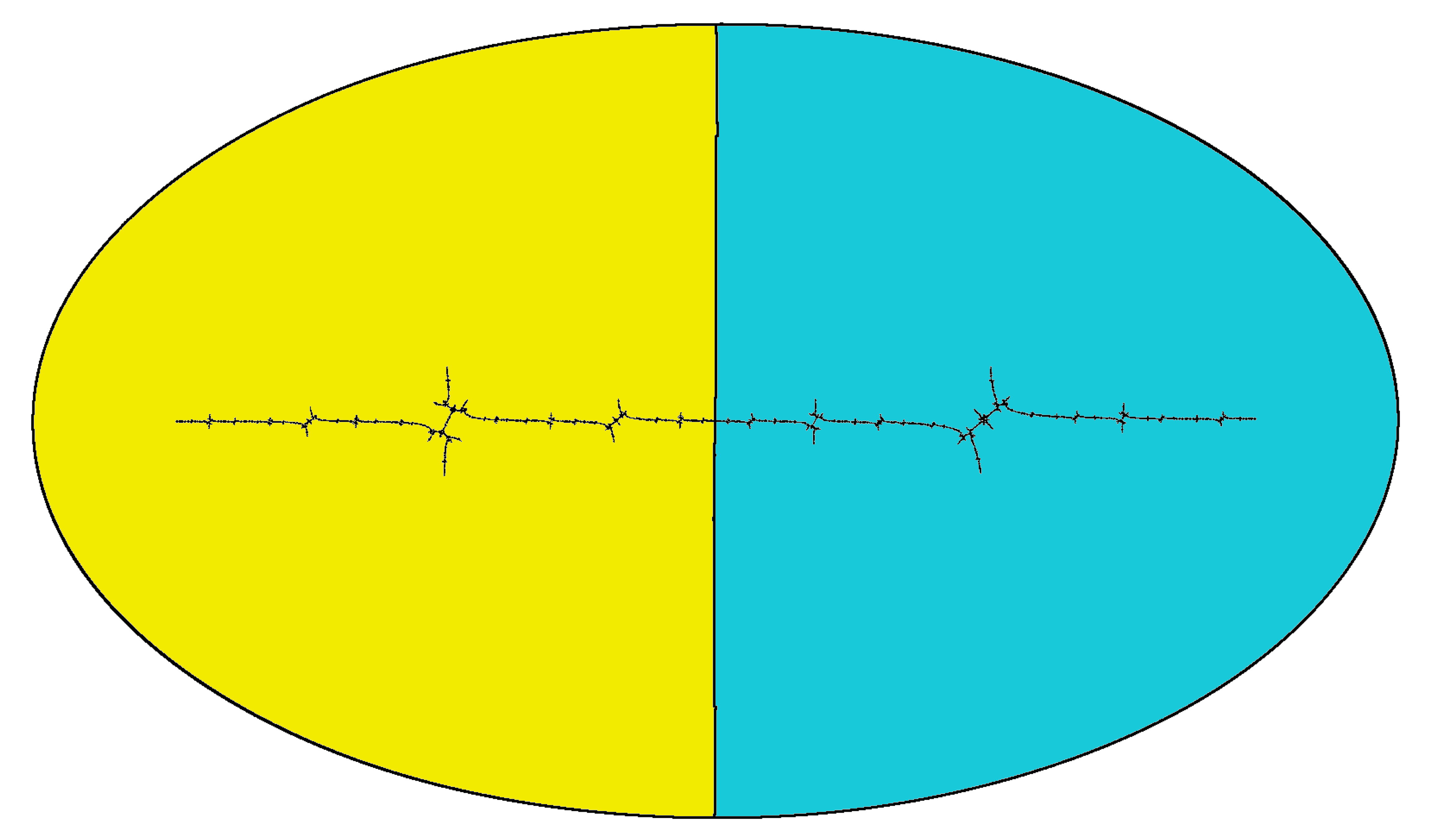}}}
\put(2,2){$\p_0(\beta)$}
\put(8,2){$\p_0(\alpha)$}
\end{picture}}
\bigskip
\centerline{
\includegraphics[width=10cm]{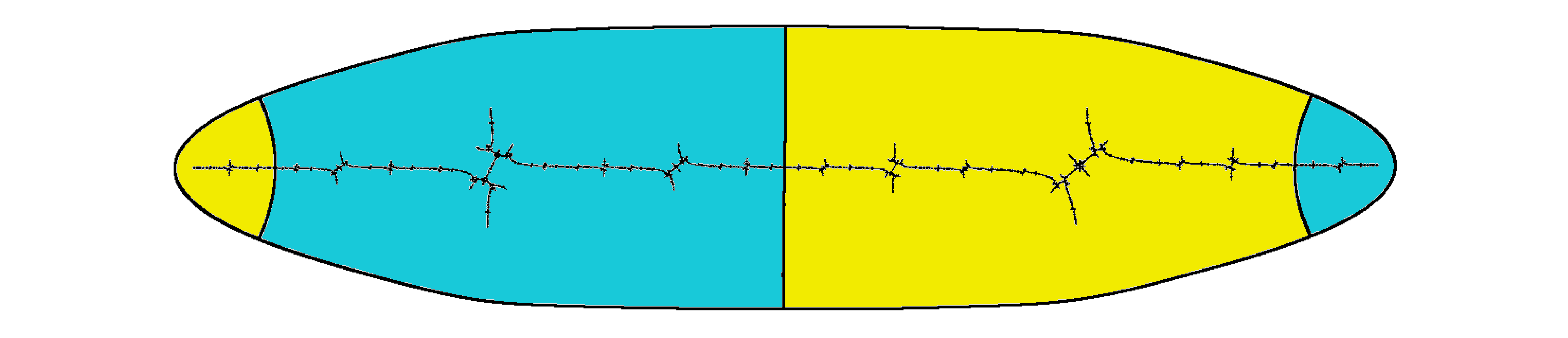}}
\bigskip
\centerline{
\includegraphics[width=10cm]{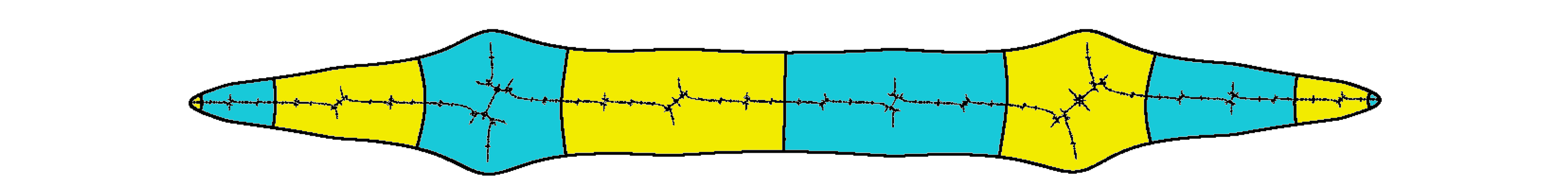}}
\bigskip
\centerline{
\includegraphics[width=10cm]{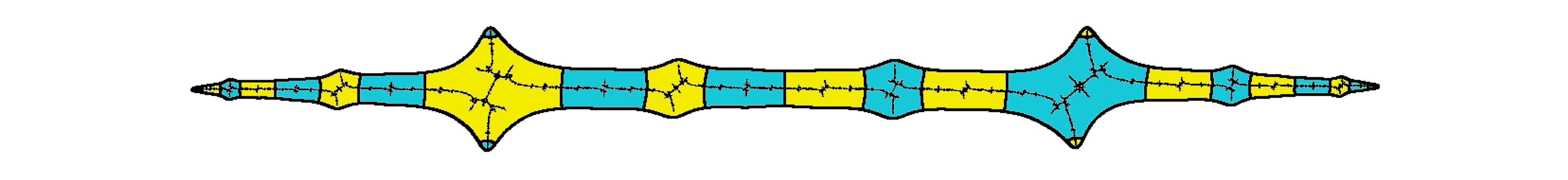}
}
\caption{The puzzle pieces of depth  $m\in \{0,1,2,3\}$, for some admissible polynomial $f\in \VV$.\label{fig:puzzle}}
\end{figure}

There are two puzzle pieces of depth $0$: one contains $\alpha$, $\omega_\alpha$ and $f(\omega_\beta)$;  the other contains $\beta$, $\omega_\beta$, $f(\omega_\alpha)$ and $f^{\circ 2}(\omega_\alpha)$. By construction, the puzzle pieces of depth $m\geq 1$ are the connected components of the preimages of the puzzle pieces of depth $m-1$. Therefore, if $\p$ is a puzzle piece of depth $m \geq 1$, then $f(\p)$ is a puzzle piece of depth $m-1$ and $f:\p\to f(\p)$ is a ramified covering. 

The union of the rays of angles $1/4$ and $-1/4$  is invariant by $f$. It follows that the puzzle pieces of depth $1$ do not intersect those rays; therefore, each puzzle piece of depth $1$ is entirely contained in a puzzle piece of depth $0$. By induction on the depth, each puzzle piece of depth $m$ is entirely contained in a puzzle piece of depth $m-1$. 

In particular, a puzzle piece contains at most one critical point. So, if $\p$ is a puzzle piece of depth $m
\geq 1$, then $f:\p\to f(\p)$ is an isomorphism if $\p$ does not contain a critical point, and a ramified covering of degree $2$ otherwise. 

%

Figure \ref{fig:puzzle} shows the puzzle pieces of depth $m\in \{0,1,2,3\}$ for some admissible polynomial $f\in \VV$. We use one color for the puzzle pieces which are iterated preimages of $\p_0(\alpha)$ and another one for the puzzle pieces which are iterated preimages of $\p_0(\beta)$.

The main result from which we deduce Lemma \ref{lem:convcaratheodory} is the following. 

\begin{proposition}\label{prop:puzzle}
The maximum diameter of a puzzle piece of depth $m$ tends to $0$ as $m$ tends to $+\infty$. 
\end{proposition}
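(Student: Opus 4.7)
The plan is to exploit the strict postcritical finiteness of $f$: since $f$ is admissible, both critical orbits terminate at the repelling fixed point $\alpha$, so the postcritical set $P_f := \bigcup_{n \geq 1} f^{\circ n}(\{\omega_\alpha,\omega_\beta\})$ is a finite subset of $\mj_f$. Such an $f$ is subhyperbolic, and I would get Proposition \ref{prop:puzzle} by combining uniform expansion near $\mj_f$ in a Thurston orbifold metric with a standard pull-back argument.

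\textbf{Step 1 (orbifold metric).} Following Thurston and Douady--Hubbard, I would associate to $f$ a hyperbolic orbifold $\mathcal{O}$ on $\wcom$ with cone points at $P_f\cup\{\infty\}$, with weights determined by the local degrees of $f$ at preimages. Postcritical finiteness turns $f : (\wcom, \mathcal{O}') \to (\wcom, \mathcal{O})$ into an orbifold covering, where $\mathcal{O}'$ strictly refines $\mathcal{O}$. Schwarz--Pick for orbifold coverings then yields, for the orbifold Poincaré metric $\rho$ of $\mathcal{O}$, an inequality $f^*\rho \geq \lambda \rho$ with some uniform constant $\lambda > 1$, valid on a neighborhood $W$ of $\mj_f$ where $\rho$ is smooth away from the finite set $P_f$.

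\textbf{Step 2 ($\rho$-shrinkage.)} Each puzzle piece $\p_m$ of depth $m \geq 1$ maps under $f^{\circ m}$ as a ramified covering onto one of the two depth-$0$ pieces $\p_0(\alpha)$ or $\p_0(\beta)$. Given $z, z' \in \p_m$, I would connect $f^{\circ m}(z)$ and $f^{\circ m}(z')$ by a curve inside $f^{\circ m}(\p_m)$ of $\rho$-length at most $D := \max\bigl(\mathrm{diam}_\rho\,\p_0(\alpha),\, \mathrm{diam}_\rho\,\p_0(\beta)\bigr)$, then lift it back to $\p_m$ by path-lifting for the ramified covering (allowed here because $f$ is an orbifold local isometry, including at critical fibers). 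Uniform expansion gives a lifted curve of $\rho$-length $\leq D \lambda^{-m}$, so $\mathrm{diam}_\rho\,\p_m \leq D \lambda^{-m}$, with the bound uniform over all puzzle pieces of depth $m$.

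\textbf{Step 3 (Euclidean shrinkage and main obstacle).} Outside a fixed Euclidean neighborhood $U$ of $P_f$, $\rho$ is comparable to the Euclidean metric on compact subsets of $W \setminus U$, so $\rho$-shrinkage gives Euclidean shrinkage for pieces at positive Euclidean distance from $P_f$. The finitely many puzzle pieces whose closure meets $P_f$ must be handled separately: I would use Koenigs linearization of $f$ at the repelling fixed point $\alpha$, and for each of the other points of $P_f$ invoke that after a uniformly bounded number of $f$-iterates one lands in the linearization neighborhood of $\alpha$. The main technical obstacle is precisely this local analysis at $P_f$, where $\rho$ has cone-type singularities and the comparison with the Euclidean metric breaks down; the very simple structure of $P_f$ here (all orbits absorbed at the single repelling fixed point $\alpha$) should make the linearization argument essentially self-contained.
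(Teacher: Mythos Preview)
Your approach via the Thurston orbifold metric is exactly the one the paper takes, but Step~2 has a real gap and Step~3 is a red herring.

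\textbf{The gap in Step 2.} The orbifold-covering property makes $f$ a local isometry from $(\C,\nu')$ to $(\C,\nu)$, but it does \emph{not} make path-lifting unique on a puzzle piece that contains a critical point. If $\p_m$ contains a critical point of $f^{\circ m}$, then $f^{\circ m}:\p_m\to \p_0$ has degree $\geq 2$; the lift of a curve from $f^{\circ m}(z)$ to $f^{\circ m}(z')$ starting at $z$ ends at \emph{some} preimage of $f^{\circ m}(z')$ in $\p_m$, not necessarily at $z'$. So you only get $\diam_\rho(\p_m)\leq C\,\deg\bigl(f^{\circ m}|_{\p_m}\bigr)\,\lambda^{-m}D$, and you must bound that degree. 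The paper does this explicitly: it shows $f:\p_1(\alpha)\to\p_0(\alpha)$ is an isomorphism (Lemma~\ref{lem:piece1}), hence the nest around $\alpha$ never meets a critical point; then, using the specific postcritical pattern $\omega\mapsto\cdots\mapsto\omega'\mapsto\cdots\mapsto\alpha$, it proves that the orbit of any depth-$m$ piece crosses a critical point at most twice before falling into the $\alpha$-nest, picking up a bounded factor (the paper writes $\diam_\mu(\p)\leq 2\kappa\,\diam_\mu(f(\p))$ at a critical crossing). Without this, your bound could a priori be $2^{c_m}\lambda^{-m}D$ with $c_m$ unbounded.

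\textbf{Step 3 is unnecessary.} At a cone point of weight $n$ the orbifold density behaves like $|z|^{-(1-1/n)}$, which \emph{blows up}; hence on the bounded region containing all puzzle pieces one has $\rho\geq c\,|dz|$ for some $c>0$, so $\diam_{\mathrm{Eucl}}(\p)\leq c^{-1}\diam_\rho(\p)$ outright. The comparison goes the right way automatically, and no separate Koenigs linearization near $\alpha$ is needed (the paper simply records $\diam(\p)\leq K_1\diam_\mu(\p)$ and moves on). Your ``main technical obstacle'' is not one.
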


\begin{proof}[Proof of Lemma \ref{lem:convcaratheodory} assuming Proposition \ref{prop:puzzle}]
First, if $g\in \VV$ is sufficiently close to $f$, the B\"ottcher coordinate $\phi_g$ tangent to identity at infinity is defined on the circle of radius $2$, and we may still define $\Gamma_g^0$ as the union of the equipotential $\bigl\{\phi_g(2{\rm e}^{2\pi {\rm i}\theta})~;~\theta\in \T\bigr\}$, the external rays of angles $1/4$ and $-1/4$ and their landing point $\gamma_g$. And for $m\geq 0$, we may define
\[\Gamma^m_g := g^{-m}(\Gamma_g^0).\]
Then, for each fixed $m$, $\Gamma_g^m$ depends holomorphically on $g$ in some neighborhood of $f$. 

Second, according to Proposition \ref{prop:puzzle}, given $\eps>0$, we may choose $m\geq 0$ sufficiently large so that the puzzle pieces of $f$ of depth $m$ have diameters at most $\eps/2$. Let $\UU$ be a sufficiently small neighborhood of $f$ in $\VV$ so that the puzzle of depth $m$ depends holomorphically on $g$ in $\UU$: for $g\in \UU$, there is a continuous map 
\[\psi:\UU\times \C\ni (g,z)\mapsto \psi_g(z)\in \C\] 
such that for each $g\in \UU$, $\psi_g:\C\to \C$ is a homeomorphism sending $\Gamma_f^m$ to $\Gamma_g^m$. 
Shrinking $\UU$ if necessary, we may assume that for $g\in \UU$, $d_\C(\psi_g,{\rm id})<\eps/2$. 

Third, assume $\p$ is a puzzle piece of $f$ of depth $m$, $\theta\in \T$ and $g\in \UU$ has a locally connected Julia set. Let 
$\varphi_g:\s^1\to \mj_g$ be the corresponding  Carathéodory loop. Then, 
$\varphi_f({\rm e}^{2\pi {\rm i}\theta})\in \overline \p$ if and only if  $\varphi_g({\rm e}^{2\pi {\rm i}\theta})\in \psi_g( \overline \p)$. Thus, $d_{\s^1}(\varphi_f,\varphi_g)<\eps$. 
\end{proof}

\begin{proof}[Proof of Proposition \ref{prop:puzzle}]
We will use the fact that $f$ is expanding for a suitable orbifold metric (see \cite[Th. 19.6]{Mi1}). 
Assume $k\geq 1$ and $\ell\geq 1$ are integers such that $f^{\circ k}(\omega)=\omega'$ and $f^{\circ \ell}(\omega') = \alpha$. 
Consider the function $\nu:\C\to \{1,2,4\}$ defined by
\[\nu(z):=\begin{cases} 
1&\text{if }z\notin\pf,\\
2&\text{if } z = f^{\circ n}(\omega)\text{ with } 1\leq n\leq k \text{ and }\\
4&\text{if } z = f^{\circ n}(\omega')\text{ with } 1\leq n\leq \ell.
\end{cases}
\]
The orbifold $(\C,\nu)$ has Euler characteristic
\[\chi(\C,\nu) = 1-\frac{1}{2}k- \frac{3}{4}\ell<0,\]
so that there is a universal covering of orbifold $\pi:\D\to \C$ which ramifies precisely with local degree $\nu(z)$ above $z\in \C$. 
Then, there is a metric of orbifold $\mu$ which is smooth outside $\pf$ and blows up at points in $\pf$, such that $\pi^*\mu$ is the hyperbolic metric on $\D$. There are constants $K_1$ and $K_2$ such that for all puzzle piece $\p$, 
\[\diam(\p)\leq K_1\diam_\mu(\p)\quad \text{and}\quad \diam_\mu(\p)\leq K_2,\]
where $\diam_\mu(\p)$ stands for the diameter of $\p$ for the metric $\mu$. 

In addition, there is a holomorphic map $F:\D\to \D$ such that the following diagram commutes: 
\[\diagram
\D\dto_\pi & \D\lto_F \dto^\pi\\
\C \rto^f & \C. 
\enddiagram\]
The map $F$ is contracting for the hyperbolic metric on $\D$. It follows that there is a constant $\kappa<1$ such that 
\[\forall z\in \overline{\p_0(\alpha)\cup \p_0(\beta)},\quad \|{\rm D}_z f\|_\mu >\frac{1}{\kappa}\]
(this norm may blow up at points in $f^{-1}(\pf)$). In particular, if $\p$ is a puzzle piece, then 
\[\diam_\mu(\p)\leq \begin{cases} \kappa \diam_\mu\bigl(f(\p)\bigr)&\text{if }\p\cap \cf = \emptyset\text{ and}\\
2\kappa \diam_\mu\bigl(f(\p)\bigr)&\text{if }\p\cap \cf \neq \emptyset.
\end{cases}
\]

\begin{lemma}\label{lem:piece1}
The piece $\p_1(\alpha)$ maps isomorphically to $\p_0(\alpha)$. 
\end{lemma}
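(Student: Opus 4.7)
The plan is to reduce the claim to checking that $\p_1(\alpha)$ contains neither critical point of $f$, from which the paragraph preceding the lemma immediately yields that $f$ restricted to $\p_1(\alpha)$ is a degree-one ramified cover, i.e., an isomorphism onto its image. The two ingredients are the nesting of puzzle pieces and the separation properties built into the definition of $\VV$.

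First I would pin down the image. Since every puzzle piece of depth $1$ is contained in a unique puzzle piece of depth $0$, and $\alpha \in \p_1(\alpha)$, we must have $\p_1(\alpha) \subseteq \p_0(\alpha)$. Moreover, by the discussion preceding the lemma, $f(\p_1(\alpha))$ is a puzzle piece of depth $0$, and because $\alpha$ is fixed we have $\alpha = f(\alpha) \in f(\p_1(\alpha))$; therefore $f(\p_1(\alpha)) = \p_0(\alpha)$.

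The main step is to rule out both critical points from $\p_1(\alpha)$. For $\omega_\beta$: condition \eqref{item:V5} in the definition of $\VV$ places $\omega_\beta$ in $U_\beta$, whereas $\p_0(\alpha) \subseteq U_\alpha$; since $U_\alpha \cap U_\beta = \emptyset$, we get $\omega_\beta \notin \p_0(\alpha) \supseteq \p_1(\alpha)$. For $\omega_\alpha$: condition \eqref{item:V5} also tells us that $f(\omega_\alpha) \in U_\beta$, so $f(\omega_\alpha) \notin \p_0(\alpha)$; since $\p_1(\alpha) \subseteq f^{-1}(\p_0(\alpha))$, this forces $\omega_\alpha \notin \p_1(\alpha)$.

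Combining these two observations, $\p_1(\alpha)$ contains no critical point, and the paragraph preceding the lemma gives directly that $f : \p_1(\alpha) \to \p_0(\alpha)$ is an isomorphism rather than a degree-$2$ branched cover. There is no real obstacle here; the content of the lemma is a bookkeeping check that the axioms defining $\VV$ push both critical points out of $\p_1(\alpha)$, the second one only after noting that $\omega_\alpha$ itself lies in $\p_0(\alpha)$ but its image does not.
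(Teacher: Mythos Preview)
Your proof is correct. You lean on the puzzle-piece machinery already set up before the lemma (nesting of pieces, $f(\p)$ a piece of lower depth, degree of $f|_\p$ determined by presence of a critical point) and then simply check, using condition~\eqref{item:V5}, that neither $\omega_\beta$ (which lies in $U_\beta$) nor $\omega_\alpha$ (whose image lies in $U_\beta$) can be in $\p_1(\alpha)$.

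The paper takes a slightly different tack: rather than invoking the piece machinery, it analyzes the full preimage $f^{-1}(U_\alpha)$ directly via the ternary angle arithmetic of the rays, showing that the component of $f^{-1}(U_\alpha)$ containing $\alpha$ sits inside $U_\alpha$ and maps isomorphically to $U_\alpha$, while the other component sits in $U_\beta$ and maps with degree~$2$. Your argument is more economical for the statement at hand. The paper's argument, however, yields as a byproduct the global structure of $f^{-1}(U_\alpha)$, and this is explicitly reused later (see the proof of Lemma~\ref{lem:angles}, which begins ``As in the proof of Lemma~\ref{lem:piece1}\ldots'').
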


\begin{proof}
Recall that the external rays of angles $1/4$ and $-1/4$ separate the plane in two connected components (see Figure \ref{fig:VV}): $U_\alpha$ containing $\alpha$ and $U_\beta$ containing $\beta$. Each ray in $U_\alpha$ has angle in $(-1/4,1/4)$. It has two preimages in $U_\beta$, one with angle in $(1/4,5/12)$ and one with angle in $(-5/12,-1/4)$, and one preimage in $U_\alpha$ with angle in $(-1/12,1/12)$. As a consequence, the component of $f^{-1}(U_\alpha)$ containing $\alpha$ is contained in $U_\alpha$ and maps isomorphically to $U_\alpha$. The other component is contained in $U_\beta$ and maps to $U_\alpha$ with  degree $2$. 

Since $\p_0(\alpha)\subset U_\alpha$, it follows that the component $\p_1(\alpha)$ of $f^{-1}\bigl(\p_0(\alpha)\bigr)$ containing $\alpha$ maps isomorphically to $\p_0(\alpha)$. 
\end{proof}

It follows by induction on $m\geq 1$ that $\p_m(\alpha)$ is the image of $\p_{m-1}(\alpha)$ by the inverse branch $f^{-1}: \p_0(\alpha)\to \p_1(\alpha)$ and that $f^{\circ m}: \p_m(\alpha)\to \p_0(\alpha)$ is an isomorphism.  As a consequence, 
\[\diam_\mu\bigl(\p_m(\alpha)\bigr)\leq K_2\cdot \kappa^m.\]
If $m$ is large enough so that $\p_m(\alpha)\cap \pf=\{\alpha\}$, then $f^{\circ \ell}:\p_{m+\ell}(\omega')\to \p_m(\alpha)$ is a ramified cover of degree $2$ and
\[\diam_\mu\bigl(\p_{m+\ell}(\omega')\bigr)\leq 2\kappa^\ell\diam_\mu\bigl(\p_{m}(\alpha)\bigr).\] It follows that there is a constant $K_3$ such that 
\[\diam_\mu\bigl(\p_m(\omega')\bigr)\leq K_3\cdot \kappa^m.\]
If $m$ is large enough so that $\p_m(\omega')\cap \pf =\{\omega'\}$, then $f^{\circ k}:\p_{m+k}(\omega)\to \p_m(\omega')$ is a ramified cover of degree $2$ and 
\[\diam_\mu\bigl(\p_{m+k}(\omega)\bigr)\leq 2\kappa^k\diam_\mu\bigl(\p_{m}(\omega)\bigr).\] 
It follows that there is a constant $K_4\geq K_3$ such that 
\[\diam_\mu\bigl(\p_m(\omega)\bigr)\leq K_4\cdot \kappa^m.\]
Finally, assume $\p$ is a piece of depth $m$ and let $n\in [0,m]$ be the least integer such that $\q:=f^{\circ n}(\p)$ contains a critical point. Then, $\q$ is a piece of depth $m-n$ and $f^{\circ n}:\p\to  \q$ is an isomorphism, so that 
\[\diam_\mu(\p)\leq \kappa^n \diam_\mu(\q) \leq \kappa^n\cdot K_4\cdot \kappa^{m-n} = K_4\cdot \kappa^m.\qedhere\]
\end{proof}

\section{The key proposition\label{sec:keyprop}}

The goal of this section is to prove the Key Proposition (Proposition~\ref{prop:key}). 

\subsection{Ray configuration}

We shall first describe some general configuration of external rays for polynomials $f\in \VV$.

\begin{lemma}\label{lem:critsep}
Assume $f\in \VV$ and assume the ray of angle $\theta$ lands on $f(\omega)$ for some critical point $\omega$. Then, the two preimage rays landing at $\omega$ separate $\alpha$ and $\beta$. One has angle in $(0,5/12)_\T$ and one has angle in $(-5/12,0)_\T$
\end{lemma}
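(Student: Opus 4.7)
My plan is to use the no-crossing property of external rays combined with the angular region structure imposed by the defining conditions of $\VV$. Two external rays at distinct angles cannot meet except at a common landing point, so the rays of angles $\pm 1/4$ (landing at $\gamma$) confine any external ray at angle $\phi\in(-1/4,1/4)$ to $\overline{U_\alpha}$ and any ray at angle $\phi\in(1/4,3/4)$ to $\overline{U_\beta}$; in particular, landing points lie in the corresponding closed half-plane. Because $f$ triples angles, the three preimages of the ray at angle $\theta$ are the rays at angles $\theta/3$, $(\theta+1)/3$, $(\theta+2)/3$, which land at the three $f$-preimages of $f(\omega)$ counted with multiplicity. As $\omega$ is one of the two distinct critical points prescribed by \eqref{item:V2}, these preimages consist of $\omega$ with multiplicity $2$ and one further simple preimage, so exactly two of the three preimage rays land at $\omega$.

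Next, I would split into the two cases $\omega=\omega_\alpha$ and $\omega=\omega_\beta$. Condition \eqref{item:V5} forces $f(\omega)$ to lie in the region opposite to $\omega$: if $\omega=\omega_\alpha$ then $f(\omega_\alpha)\in U_\beta$ and so $\theta\in(1/4,3/4)$, while if $\omega=\omega_\beta$ then $f(\omega_\beta)\in U_\alpha$ and so $\theta\in(-1/4,1/4)$. In each case a direct computation of the three preimage angles shows that two of them fall in the angular range of the region containing $\omega$ and the third in the opposite range. By the confinement step, the first two rays land in the closed half containing $\omega$ and the third lands in the opposite closed half, which forces the two rays landing at $\omega$ to be exactly the two with angles in $\omega$'s range. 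Reading off the intervals then gives one angle in $(1/12,1/4)_\T$ or $(1/4,5/12)_\T$ and one in $(-1/4,-1/12)_\T$ or $(-5/12,-1/4)_\T$, in every instance contained in $(0,5/12)_\T$ and $(-5/12,0)_\T$ respectively.

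For the separation claim, the two rays landing at $\omega$ together with $\omega$ form an arc in $\C$ whose closure in $\widehat{\C}$ is a Jordan curve. Since the two angles lie in $(0,5/12)_\T$ and $(-5/12,0)_\T$, the two arcs of $\T$ that they bound separate into one containing the angle $0$ and one containing the angle $1/2$. Hence the ray of angle $0$ (landing at $\alpha$) and the ray of angle $1/2$ (landing at $\beta$) lie in distinct components of the complement of the Jordan curve, so $\alpha$ and $\beta$ are separated by the two rays at $\omega$. The step I expect to be the main obstacle is the confinement argument: justifying rigorously that the preimage rays land at the preimages of $f(\omega)$ and that no-crossing with the non-bifurcating fixed rays at $\pm 1/4$ indeed traps each preimage ray in the half-plane dictated by its angle; once this is in place, the case analysis and the separation both reduce to simple combinatorics on $\T$.
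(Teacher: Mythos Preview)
Your proposal is correct and follows essentially the same approach as the paper's proof: both split into the cases $\omega=\omega_\alpha$ and $\omega=\omega_\beta$, use the confinement of rays to $U_\alpha$ or $U_\beta$ according to whether their angle lies in $(-1/4,1/4)_\T$ or $(1/4,3/4)_\T$, compute the three preimage angles of $\theta$, and observe that exactly two of them fall in the region containing $\omega$, yielding the stated subintervals of $(0,5/12)_\T$ and $(-5/12,0)_\T$. Your treatment is somewhat more explicit than the paper's (you spell out the Jordan-curve argument for separation and flag the landing/confinement step as the point requiring care, whereas the paper simply asserts ``those rays separate $\alpha$ and $\beta$''), but the underlying ideas are identical.
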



\begin{proof}
Each ray contained in $U_\alpha$ (with angle in $(-1/4,1/4)_\T$) has a single preimage in $U_\alpha$ with angle in $(-1/12,1/12)_\T$, and two preimages in $U_\beta$, one with angle in $(1/4,5/12)_\T$ and one with angle in $(-5/12,-1/4)_\T$. So, if a ray lands at $f(\omega_\beta)\in U_\alpha$, then one preimage ray with angle in $(1/4,5/12)_\T$ and one preimage ray with angle in $(-5/12,-1/4)_\T$ land at $\omega_\beta\in U_\beta$. Those rays separate $\alpha$ and $\beta$ (see Figure~\ref{fig:critseparate}).

Similarly, if a ray lands at $f(\omega_\alpha)\in U_\beta$, then one preimage ray with angle in $(1/12,1/4)_\T$ and one preimage ray with angle in $(-1/4,-1/12)_\T$ land at $\omega_\alpha\in U_\alpha$. Those rays separate $\alpha$ and $\beta$ (see Figure~\ref{fig:critseparate}).
\end{proof}

\begin{figure}[hbt!]
  \centerline{
\setlength{\unitlength}{1cm}
\begin{picture}(11,11.5)(0,0)
\put(0,0){\fbox{\includegraphics[width=11cm]{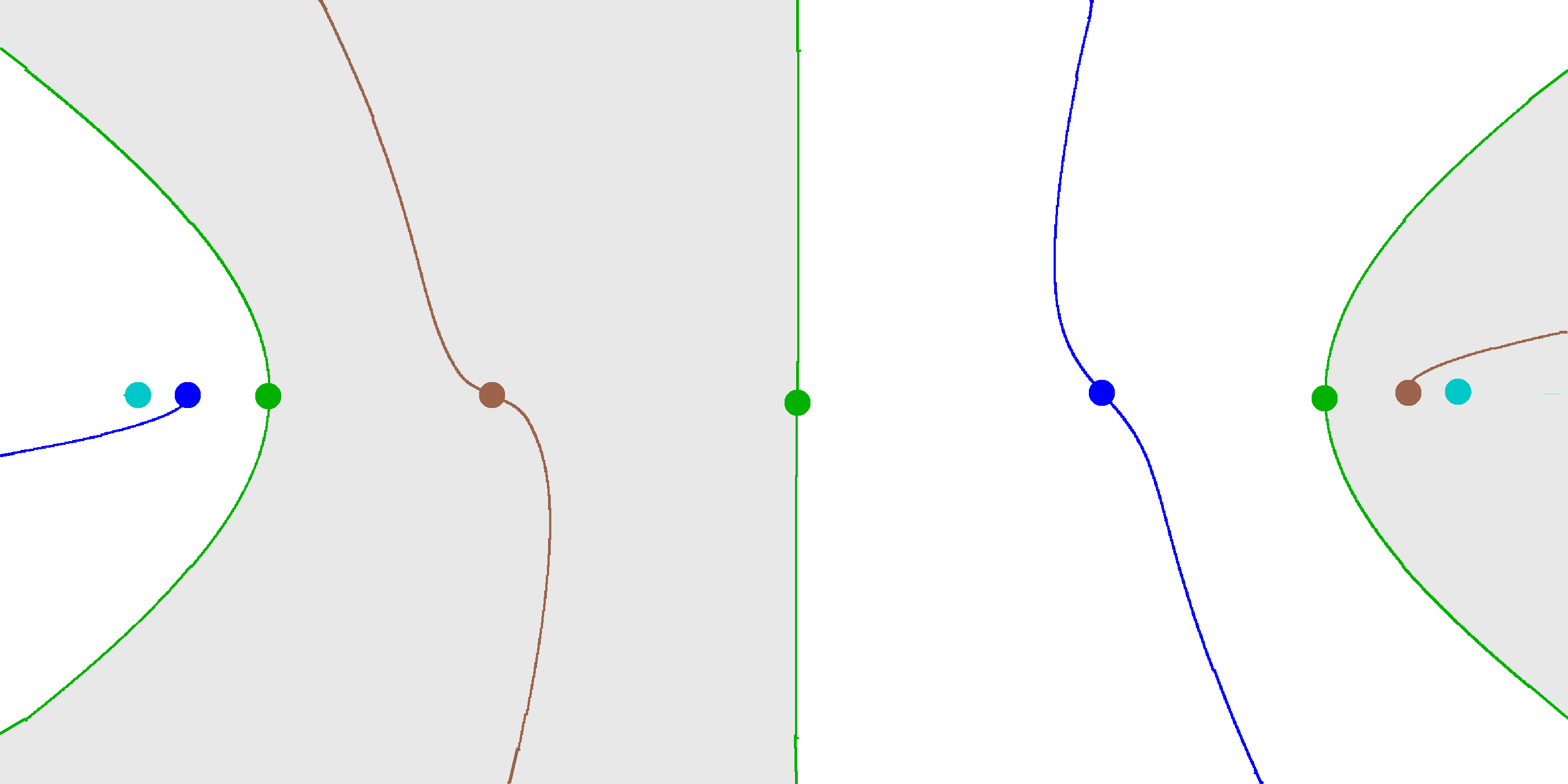}}}
\put(0,6){\fbox{\includegraphics[width=11cm]{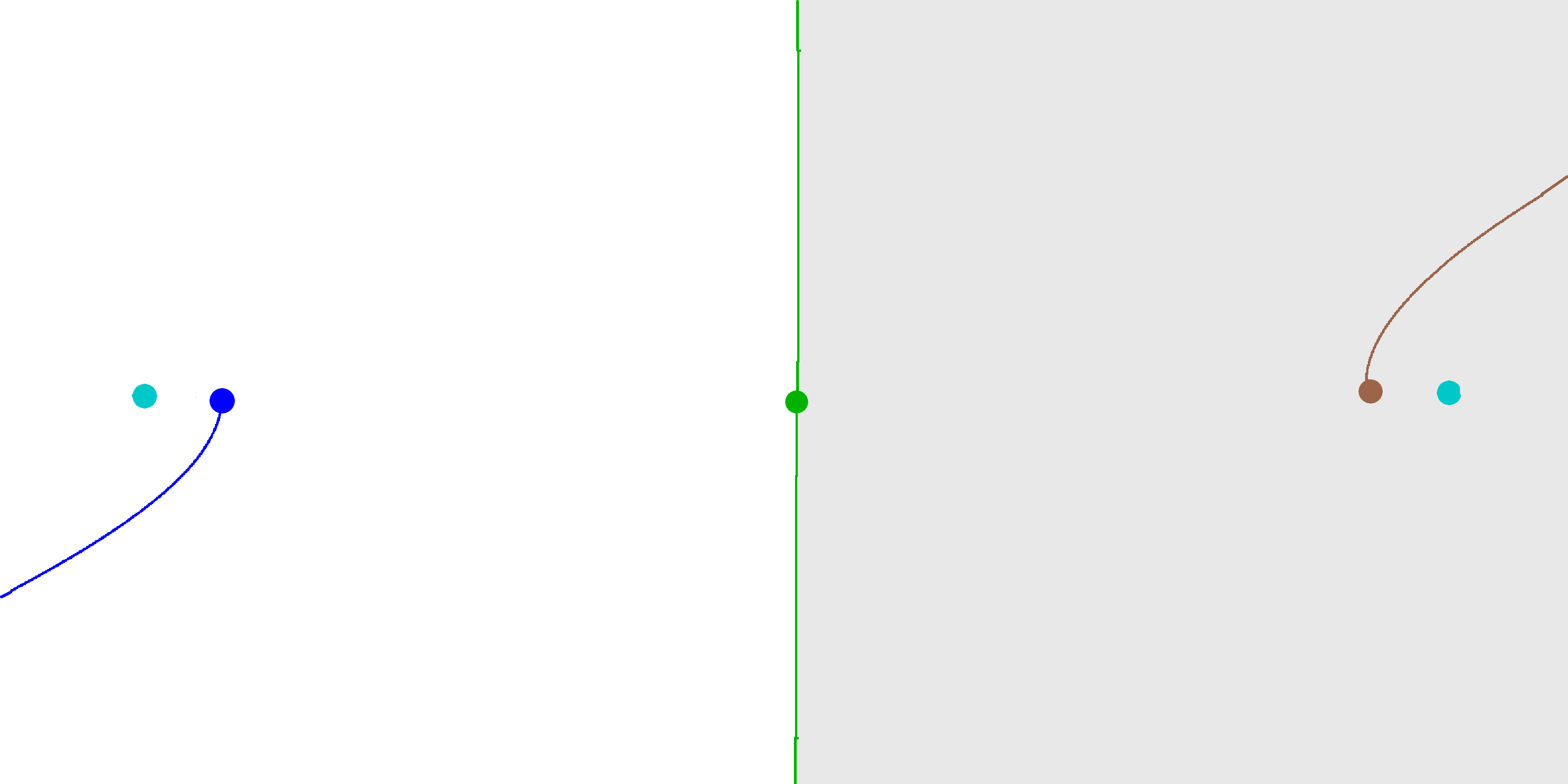}}}
\put(3.7,5.2){$f$}
\put(8.5,7){$U_\alpha$}
\put(2.5,7){$U_\beta$}
\put(5.3,2.7){$\gamma$}
\put(5.3,8.7){$\gamma$}
\put(5.9,10.6){$\frac{1}{4}$}
\put(5.9,6.7){$-\frac{1}{4}$}
\put(5.9,4.6){$\frac{1}{4}$}
\put(5.9,0.7){$-\frac{1}{4}$}
\put(10.5,2.7){$\alpha$}
\put(10.5,8.7){$\alpha$}
\put(0.7,2.7){$\beta$}
\put(0.7,8.7){$\beta$}
\put(8.7,8.7){$f(\omega_\beta)$}
\put(1.9,8.7){$f(\omega_\alpha)$}
\put(3.8,2.7){$\omega_\beta$}
\put(7.3,2.7){$\omega_\alpha$}
\put(0.795,0.5){$-\frac{5}{12}$}
\put(0.85,4.7){$\frac{5}{12}$}
\put(9.9,0.5){$-\frac{5}{36}$}
\put(9.9,4.7){$\frac{5}{36}$}
\thicklines
\put(4,5){\vector(0,1){1.5}}
\end{picture}
}
    \caption{Sketch of the configuration of external rays described in the proof of Lemma~\ref{lem:critsep}. \label{fig:critseparate}}
    
\end{figure}

\begin{lemma}\label{lem:angles}
For $f\in \VV$, the rays of angles $5/12$ and $-5/12$ land at a common point and bound an open set $W_f$ containing $\alpha$; the rays of angles $5/36$ and $-5/36$ land at a common point and bound an open set $V_f$ containing $\alpha$. Moreover, $f(V_f) = W_f$ and  $f:V_f\to W_f$ is an isomorphism. 
\end{lemma}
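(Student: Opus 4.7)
The plan is to identify $W_f$ and $V_f$ explicitly in terms of configurations of external rays, and then verify the isomorphism via the inverse branch of $f$ fixing $\alpha$.

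First, I would locate the landing points of $R(5/12)$ and $R(-5/12)$. Under $f$, these rays map to $R(1/4)$ and $R(-1/4)$ respectively (since $3\cdot 5/12 \equiv 1/4$ and $3\cdot(-5/12) \equiv -1/4 \pmod 1$), hence they land at preimages of $\gamma$. The three preimages of $\gamma$ are $\gamma$ itself, a point $\gamma' \in U_\alpha$ (the image of $\gamma$ under the inverse branch of $f$ fixing $\alpha$ inside $U_\alpha$), and a third point $\gamma''$. At $\gamma$ land $R(\pm 1/4)$, and at $\gamma'$ land $R(\pm 1/12)$ (since the inverse branch in $U_\alpha$ divides angles by $3$). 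The full set of preimage rays of $R(\pm 1/4)$ is $\{R(\pm 1/12),\, R(\pm 5/12),\, R(\pm 1/4)\}$, so $R(5/12)$ and $R(-5/12)$ must both land at the remaining preimage $\gamma''$. Since $R(\pm 5/12) \subset U_\beta$, we get $\gamma'' \in U_\beta$, and $W_f$ is the component of $\C \setminus (R(5/12) \cup R(-5/12) \cup \{\gamma''\})$ containing $\alpha$.

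Second, to pass to $V_f$, I would count critical values of $f$ in $W_f$. Clearly $f(\omega_\beta) \in U_\alpha \subset W_f$. I claim $f(\omega_\alpha) \notin W_f$: indeed, $W_f \cap U_\beta$ is precisely the component of $f^{-1}(U_\alpha)$ contained in $U_\beta$ (bounded by $R(\pm 1/4)$ and $R(\pm 5/12)$ and landing at $\gamma, \gamma''$), and if $f(\omega_\alpha)$ were in this component, then $f^{\circ 2}(\omega_\alpha) \in U_\alpha$, contradicting condition \eqref{item:V5}. Hence $W_f$ contains exactly one critical value, so $f^{-1}(W_f)$ has exactly two components: an isomorphic preimage and a degree-$2$ preimage containing $\omega_\beta$. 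Since $f(\alpha)=\alpha \in W_f$ and $\alpha$ is not critical, $\alpha$ lies in the isomorphic preimage, which I take as $V_f$.

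Third, to identify the boundary of $V_f$, I would use the inverse branch $h := (f|_{V_f})^{-1}\colon W_f \to V_f$. Since $R(0) \subset W_f$ is $f$-invariant and accumulates at $\alpha$, it is contained in $V_f$, so $h$ fixes $R(0)$ pointwise; in B\"ottcher coordinates near infinity $h$ then conjugates to the principal cube root, so it sends a ray of angle $\theta\in(-5/12,5/12)$ to the ray of angle $\theta/3$. In particular $h\bigl(R(\pm 5/12)\bigr) = R(\pm 5/36)$, and $h(\gamma'')$ is a preimage of $\gamma''$. Thus $V_f$ is bounded by $R(5/36)$, $R(-5/36)$ and their common landing point $h(\gamma'')$, contains $\alpha$, and $f$ maps it isomorphically onto $W_f$. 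The main obstacle is the combinatorial identification that $R(5/12)$ and $R(-5/12)$ land at the \emph{same} preimage of $\gamma$, combined with the slightly delicate use of condition \eqref{item:V5} to exclude $f(\omega_\alpha)$ from $W_f$; both hinge on carefully tracking which subregion of $U_\beta$ is cut off by the rays of angle $\pm 5/12$.
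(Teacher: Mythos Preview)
Your proof is correct and follows essentially the same route as the paper: both locate the common landing point of $R(\pm 5/12)$ among the preimages of $\gamma$, then use condition~\eqref{item:V5} (specifically $f^{\circ 2}(\omega_\alpha)\in U_\beta$) to show $f(\omega_\alpha)\notin W_f$, so that $W_f$ contains a single critical value and the component of $f^{-1}(W_f)$ containing $\alpha$ is an isomorphic preimage bounded by $R(\pm 5/36)$. The only cosmetic difference is that the paper phrases the exclusion of $f(\omega_\alpha)$ via the component $U'_\beta$ of $f^{-1}(U_\beta)$ inside $U_\beta$, whereas you use the complementary region $W_f\cap U_\beta\subset f^{-1}(U_\alpha)$; these are the same observation.
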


The result is illustrated on Figure \ref{fig:ugvg}. 
\begin{figure}[htbp]
\centerline{
\setlength{\unitlength}{1cm}
\begin{picture}(11,11.5)(0,0)
\put(0,0){\fbox{\includegraphics[width=11cm]{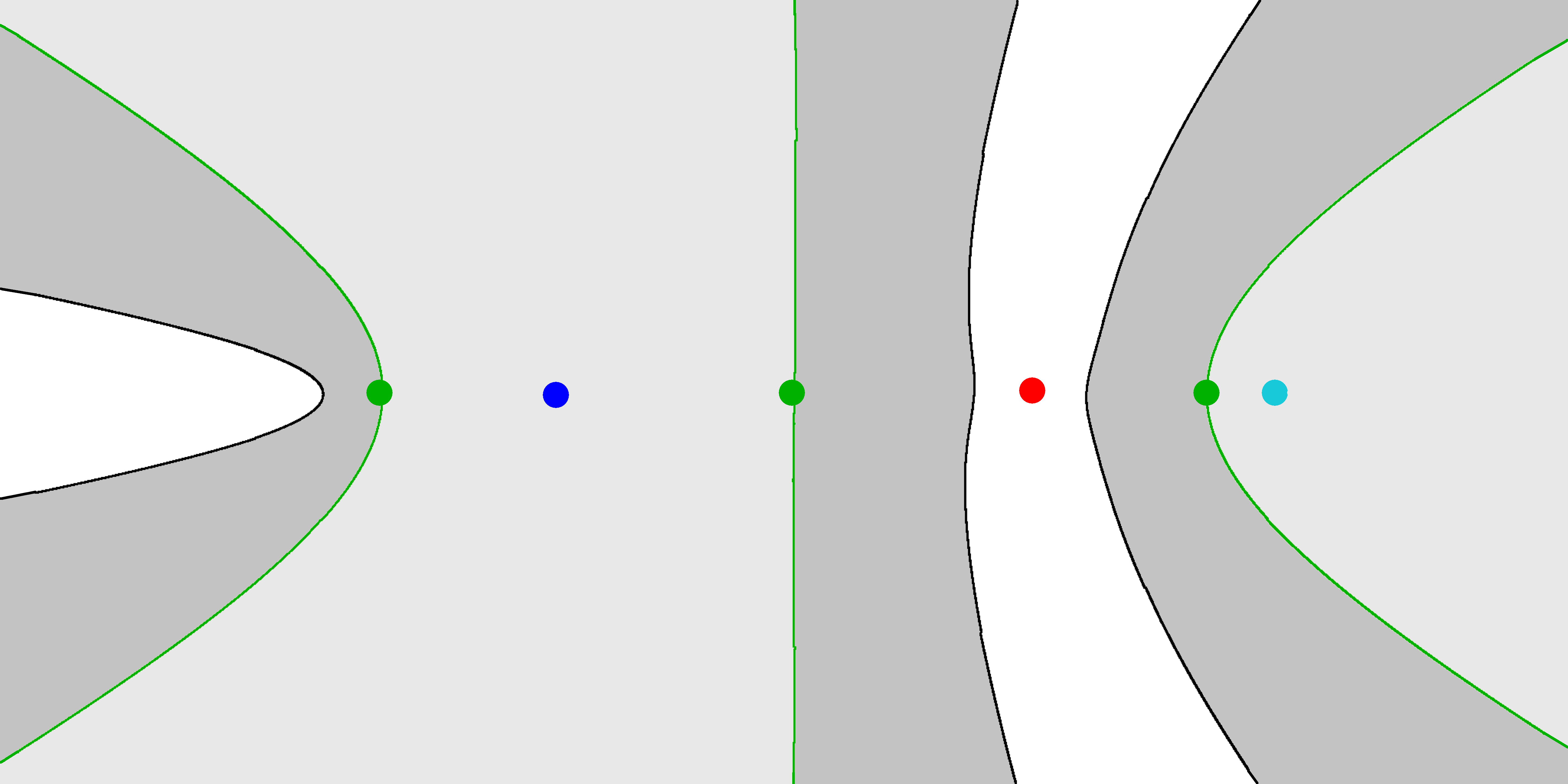}}}
\put(0,6){\fbox{\includegraphics[width=11cm]{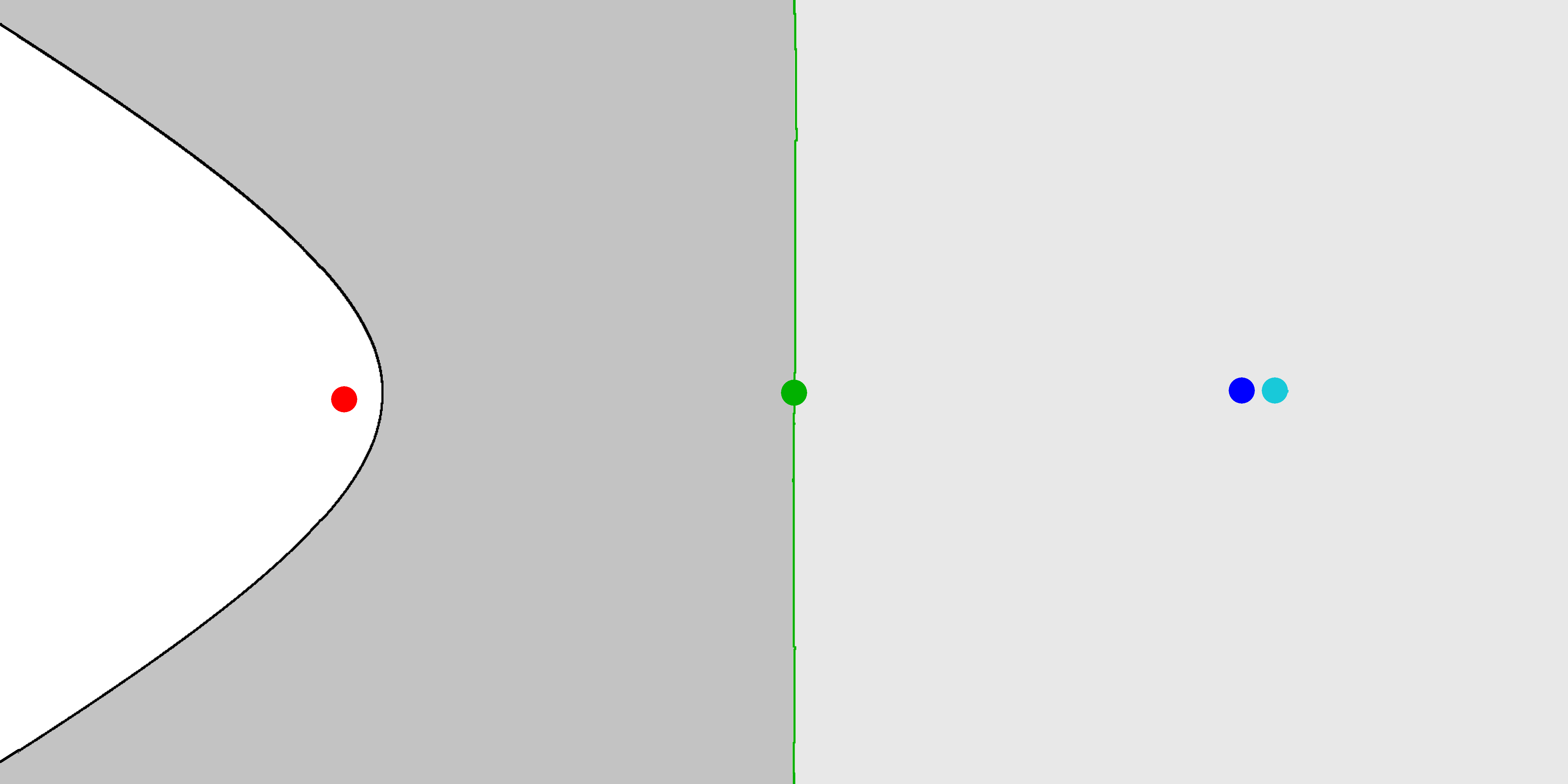}}}
\put(3.7,5.2){$f$}
\put(8.5,7){$U_\alpha$}
\put(5.5,7){$W_f$}
\put(9.7,1){$V_f$}
\put(10,2){$U'_\alpha$}
\put(.5,7.5){$U'_\beta$}
\put(5.3,2.7){$\gamma$}
\put(5.3,8.7){$\gamma$}
\put(9.3,2.7){$\alpha$}
\put(9.3,8.7){$\alpha$}
\put(7.8,8.7){$f(\omega_\beta)$}
\put(1.5,8.7){$f(\omega_\alpha)$}
\put(3.8,3){$\omega_\beta$}
\put(7.2,3){$\omega_\alpha$}
\thicklines
\put(4,5){\vector(0,1){1.5}}
\put(0.9,6.4){$-\frac{5}{12}$}
\put(1.1,10.9){$\frac{5}{12}$}
\put(8.8,0.4){$-\frac{5}{36}$}
\put(8.9,4.9){$\frac{5}{36}$}
\end{picture}
}
\caption{The open set $V_f$ containing $\alpha$ and bounded by the rays of angles $5/36$ and $-5/36$ maps isomorphically to the open set $W_f$ containing $\alpha$ and bounded by the rays of angles $5/12$ and $-5/12$. \label{fig:ugvg}}
\end{figure}

\begin{proof}
The rays of angles $1/4$ and $-1/4$ land at a common fixed point $\gamma$. This point has three distinct preimages, including itself. The rays of angles $5/12$ and $-5/12$ are contained in $U_\beta$ and have to land at common preimage of $\gamma$ contained in $U_\beta$, while the rays of angles $1/12$ and $-1/12$ are contained in $U_\alpha$ and have to land at the other preimage of $\gamma$ contained in $U_\alpha$. 

As in the proof of Lemma \ref{lem:piece1}, the component $U'_\alpha$ of $f^{-1}(U_\alpha)$ contained in $U_\alpha$ maps isomorphically to $U_\alpha$. Similarly, the component $U'_\beta$ of $f^{-1}(U_\beta)$ contained in $U_\beta$ maps isomorphically to $U_\beta$. The first is bounded by the rays of angles $1/12$ and $-1/12$ and the second is bounded by the rays of angles $5/12$ and $-5/12$. 

By assumption, $f(\omega_\alpha)\in U_\beta$ and $f^{\circ 2}(\omega_\alpha)\in U_\beta$, so that $f(\omega_\alpha)\in U'_\beta$. It follows that the region $W_f$ bounded by the rays of angle $5/12$ and $-5/12$ and containing $\alpha$ contains a single critical value $f(\omega_\beta)$. Note that $f(\omega_\beta)\in U_\alpha\subset W_f$. As a consequence, $f^{-1}(W_f)$ has two connected components. One contains $\omega_\beta$ and maps with degree $2$ to $W_f$. The other, $V_f$,  contains $U'_\alpha$ and maps isomorphically to $W_f$. This last component $V_f$ contains the ray of angle $0$ and so, is bounded by the preimages of the rays of angles $5/12$ and $-5/12$ whose angles are closest to $0$, i.e., the rays of angles $5/36$ and $-5/36$. 
\end{proof}

\subsection{Polynomials with $(k,\ell)$-configuration}\label{sec:klconfig}

Here, we assume $f\in \VV$ has $(k,\ell)$-configuration, i.e.,  
\[f^{\circ k}(\omega) = \omega'\quad\text{and}\quad f^{\circ \ell}(\omega') = \alpha\quad \text{with}\quad k\geq 1\text{ and } \ell\geq 1.\]
As $g$ varies in $\VV$ the two critical points of $g$ depend holomorphically on $g$ and $0$ remains the landing point of the ray of angle $0$. Denote by $\bo:\VV\to \C$ and $\bo':\VV\to \C$ holomorphic maps following the critical points of $g\in \VV$ with $\bo(f) = \omega$ and $\bo'(f) =\omega'$. In addition, for $m\geq 0$, let $\{\bo_{-m}:\VV\to \C\}_{m\geq 0}$ be defined recursively by 
\[\bo_0:= \bo \quad \text{and}\quad \bo_{-m-1}(g)\in V_g \quad \text{with}\quad g\bigl(\bo_{-m-1}(g)\bigr) = \bo_{-m}(g).\]
The sequence is well defined since $g:V_g\to W_g$ is an isomorphism and $V_g\subset W_g$. As $m\to +\infty$, the sequence $\{\bo_{-m}\}$ converges locally uniformly to $0$ on $\VV$.

We now exhibit a particular sequence $\{g_m\}_{m\geq m_0}$ of polynomials converging to $f$ in $\VV$. 

\begin{lemma}\label{lem:pm}
Assume $f\in \VV$ has $(k,\ell)$-configuration. Then, there is a sequence $\{g_m\}_{m\geq m_0}$ converging to $f$ in $\VV$ such that $g_m$ has $(m+\ell,k+\ell)$-configuration with critical points $\omega_m$ and $\omega'_m$ satisfying:
\[ \omega_m=\bo'(g_m),\quad \omega'_m:=\bo(g_m)\quad \text{and}\quad g_m^{\circ \ell}(\omega_m) =\bo_{-m}(g_m).\] 
\end{lemma}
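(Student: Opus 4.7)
I would exhibit $g_m$ as the unique solution, near $f$, of a system of two holomorphic equations on the two-dimensional parameter space $\VV$. Introduce the holomorphic map
\[\Phi_m\colon \VV \to \C^2,\qquad \Phi_m(g):=\bigl(g^{\circ(k+\ell)}(\bo(g)),\; g^{\circ \ell}(\bo'(g))-\bo_{-m}(g)\bigr).\]
A zero $g_m$ of $\Phi_m$ near $f$ yields $g_m^{\circ(k+\ell)}(\bo(g_m))=0=\alpha$ and $g_m^{\circ \ell}(\bo'(g_m))=\bo_{-m}(g_m)$. Iterating the latter identity $m$ further times, using the defining relation $g\circ \bo_{-j-1}=\bo_{-j}$ of the sequence $\{\bo_{-m}\}$, gives $g_m^{\circ(m+\ell)}(\bo'(g_m))=\bo(g_m)$. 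Setting $\omega_m:=\bo'(g_m)$ and $\omega'_m:=\bo(g_m)$ then produces exactly the $(m+\ell,k+\ell)$-configuration demanded, together with the required identity $g_m^{\circ \ell}(\omega_m)=\bo_{-m}(g_m)$.

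The existence of $g_m$ and the convergence $g_m\to f$ should follow from the inverse function theorem. At $g=f$ one has
\[\Phi_m(f) = \bigl(f^{\circ(k+\ell)}(\omega),\; f^{\circ \ell}(\omega')-\bo_{-m}(f)\bigr) = \bigl(0,\; -\bo_{-m}(f)\bigr),\]
which tends to $(0,0)$ as $m\to\infty$, since the inverse branch $f^{-1}\colon W_f\to V_f\subset W_f$ is a uniform contraction whose attracting fixed point is $\alpha=0$. Moreover $D_f\Phi_m$ differs from $D_f\Phi_\infty$, where
\[\Phi_\infty(g) := \bigl(g^{\circ(k+\ell)}(\bo(g)),\; g^{\circ \ell}(\bo'(g))\bigr),\]
by the derivative of $g\mapsto \bo_{-m}(g)$ at $f$, which tends to $0$ locally uniformly in $m$ for the same contraction reason. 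So, assuming $D_f\Phi_\infty$ is invertible, the inverse function theorem supplies, for $m$ large, a unique $g_m$ close to $f$ with $\Phi_m(g_m)=0$, and $g_m\to f$.

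The hard step is the non-degeneracy of $D_f\Phi_\infty$, i.e.\ the assertion that the two analytic curves $\{g^{\circ(k+\ell)}(\bo(g))=0\}$ and $\{g^{\circ \ell}(\bo'(g))=0\}$ in $\VV$ meet transversely at $f$. Since $f$ is strictly post-critically finite with both critical orbits terminating at the repelling fixed point $\alpha$, a non-trivial holomorphic deformation of $f$ preserving both critical-orbit relations would place $f$ in a positive-dimensional family of strictly post-critically finite cubics, contradicting Thurston rigidity. Upgrading this isolation to a transverse intersection at the derivative level can be done either via the standard transversality theorem for critical-orbit relations at post-critically finite parameters, or directly by exploiting the expansion of $f$ along the two finite critical orbits (using the orbifold metric already introduced in Section~\ref{sec:convcar}, in the spirit of the proof of Proposition~\ref{prop:puzzle}). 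Once this transversality is in hand, openness of $\VV$ guarantees $g_m\in\VV$ for $m$ large, distinctness of $\omega_m$ and $\omega'_m$ follows from continuity of the critical points, and all remaining claims are immediate from the construction of $\Phi_m$.
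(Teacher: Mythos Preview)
Your approach is correct in outline, but it leans on a harder result than the paper needs. The paper avoids the two-variable inverse function theorem entirely: instead of asking for transversality of the curves $\Sigma=\{g^{\circ(k+\ell)}(\bo(g))=0\}$ and $\Sigma'=\{g^{\circ \ell}(\bo'(g))=0\}$, it parametrizes a local branch of $\Sigma$ through $f$ by a disk $t\mapsto F_t$, and then considers the single holomorphic function $\sigma_m(t)=F_t^{\circ \ell}(\bo'(F_t))-\bo_{-m}(F_t)$. As $m\to\infty$ this converges to $\sigma(t)=F_t^{\circ \ell}(\bo'(F_t))$, which vanishes at $t=0$ but is not identically zero because $(\omega,\omega')$ is an \emph{isolated} point of $\Sigma\cap\Sigma'$. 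Rouch\'e's theorem then produces $t_m\to 0$ with $\sigma_m(t_m)=0$, and $g_m:=F_{t_m}$ works. The key gain is that the paper only needs \emph{isolation} of the PCF parameter---which follows from the soft observation that the PCF locus in $\C^2$ is bounded, hence $\Sigma\cap\Sigma'$ is a bounded analytic set, hence finite---whereas your argument needs $D_f\Phi_\infty$ to be invertible, i.e.\ genuine transversality.

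You correctly flag that Thurston rigidity alone yields isolation, not transversality, and that an extra argument is required. The transversality theorem you cite is indeed available in the literature, so your proof can be completed; but it is a substantially deeper input than anything the paper uses. Your suggested alternative route via the orbifold metric is not standard and would itself need a careful argument. In short: your strategy works, but the paper's one-variable Rouch\'e trick on the curve $\Sigma$ is both more elementary and more self-contained.
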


\begin{remark}
The roles of the two critical points are exchanged: the sequence $\{\omega_m\}$ converges to $\omega'$ and the sequence $\{\omega'_m\}$ converges to $\omega_m$ (see Figure~\ref{fig:perturbadmiss}).  
\end{remark}

    \begin{figure}[hbt!]
    \centering
    \subfigure[\scriptsize{$f$
    }  ]{
    \hspace{-1.1cm}
	\setlength{\unitlength}{300pt}
	\begin{picture}(1,0.14620103)%
    \put(0,0){\includegraphics[width=\unitlength]{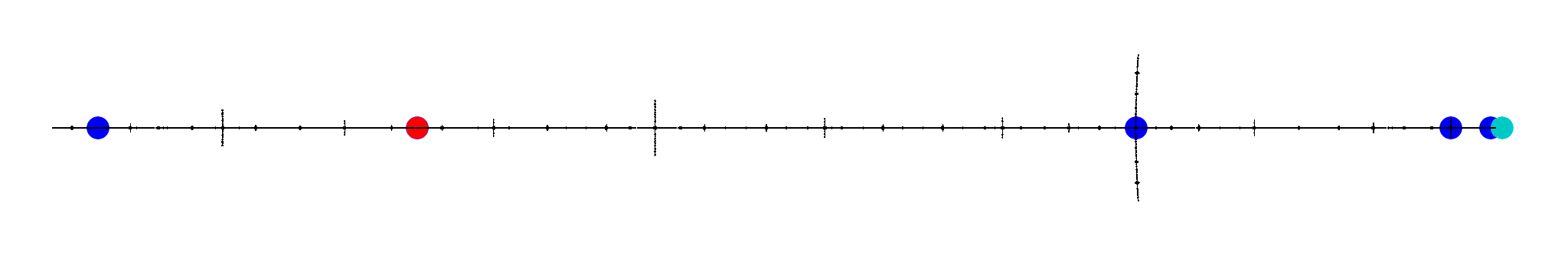}}%
    \put(0.87298389,0.09266568){\color[rgb]{0,0,0}\makebox(0,0)[lb]{\smash{\textcolor{blue}{$\omega_{-1}$}}}}%
    \put(0.95451379,0.09247511){\color[rgb]{0,0,0}\makebox(0,0)[lb]{\smash{$\alpha=f(\omega')$}}}%
    \put(0.73439765,0.0869192){\color[rgb]{0,0,0}\makebox(0,0)[lb]{\smash{\textcolor{blue}{$\omega$}}}}%
    \put(0.23669652,0.09217759){\color[rgb]{0,0,0}\makebox(0,0)[lb]{\smash{\textcolor{red}{$\omega'=f^{\circ 2}(\omega)$}}}}%
    \put(0.05102795,0.09317759){\color[rgb]{0,0,0}\makebox(0,0)[lb]{\smash{\textcolor{blue}{$f(\omega')$}}}}%
    \put(0.93872724,0.05170341){\color[rgb]{0,0,0}\makebox(0,0)[lb]{\smash{\textcolor{blue}{$\omega_{-2}$}}}}
     \end{picture}     
   }
    \hspace{0.1in}
\subfigure[\scriptsize{$g_1$}  ]{
\hspace{-1.1cm}
\setlength{\unitlength}{300pt}
\begin{picture}(1,0.14696772)%
    \put(0,0){\includegraphics[width=\unitlength]{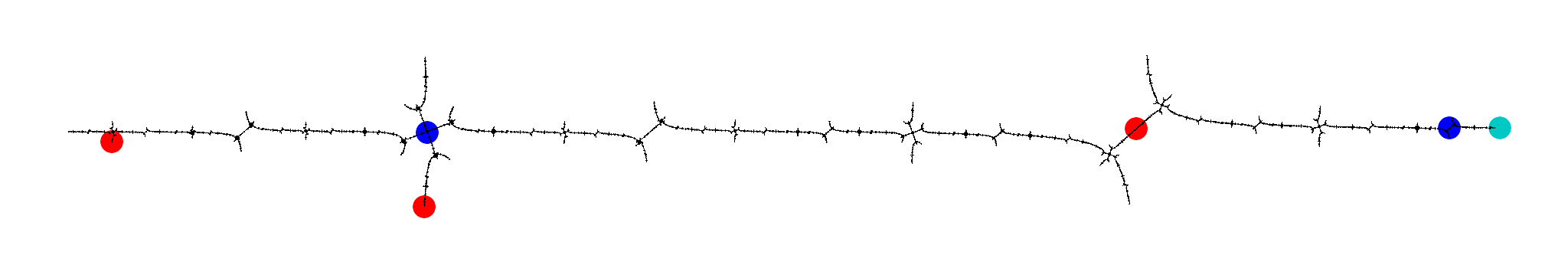}}%
    \put(0.05917082,0.0392453){\color[rgb]{0,0,0}\makebox(0,0)[lb]{\smash{\textcolor{red}{$g_1(\omega'_1)$}}}}%
    \put(0.22483959,0.08445751){\color[rgb]{0,0,0}\makebox(0,0)[lb]{\smash{\textcolor{blue}{$\omega_1$}}}}%
    \put(0.28359097,0.01313623){\color[rgb]{0,0,0}\makebox(0,0)[lb]{\smash{\textcolor{red}{$g_1^{\circ 2}(\omega'_1)$}}}}%
    \put(0.54180999,0.09699652){\color[rgb]{0,0,0}\makebox(0,0)[lb]{\smash{\textcolor{red}{$g_1^{\circ 2}(\omega_1)=\omega'_1$}}}}%
    \put(0.95117511,0.05285753){\color[rgb]{0,0,0}\makebox(0,0)[lb]{\smash{$\alpha=g_1^{\circ 3}(\omega'_1)$}}}%
    \put(0.85607436,0.0950744){\color[rgb]{0,0,0}\makebox(0,0)[lb]{\smash{\textcolor{blue}{$g_1(\omega_1)$}}}}%
  \end{picture}
	}   
   
    \hspace{0.1in}
    \subfigure[\scriptsize{$g_2$
     }  ]{
     \hspace{-1.1cm}
    \setlength{\unitlength}{300pt}
    \begin{picture}(1,0.14526769)%
    \put(0,0){\includegraphics[width=\unitlength]{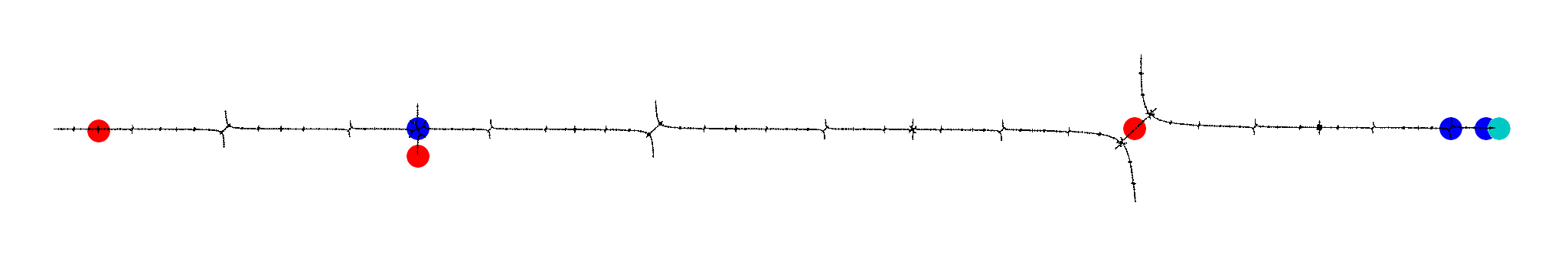}}%
    \put(0.93745532,0.04578948){\color[rgb]{0,0,0}\makebox(0,0)[lb]{\smash{\textcolor{blue}{$g_2(\omega_2)$}}}}%
    \put(0.82593213,0.09534626){\color[rgb]{0,0,0}\makebox(0,0)[lb]{\smash{\textcolor{blue}{$g_2^{\circ 2}(\omega_2)$}}}}%
    \put(0.94940698,0.09252583){\color[rgb]{0,0,0}\makebox(0,0)[lb]{\smash{$\alpha=g_2^{\circ 3}(\omega'_2)$}}}%
    \put(0.53001382,0.09508953){\color[rgb]{0,0,0}\makebox(0,0)[lb]{\smash{\textcolor{red}{$g_2^{\circ 3}(\omega_2)=\omega'_2$}}}}%
    \put(0.05507775,0.04020561){\color[rgb]{0,0,0}\makebox(0,0)[lb]{\smash{\textcolor{red}{$g_2(\omega'_2)$}}}}%
    \put(0.25385235,0.0283399){\color[rgb]{0,0,0}\makebox(0,0)[lb]{\smash{\textcolor{red}{$g_2^{\circ 2}(\omega'_2)$}}}}%
    \put(0.22785231,0.09115836){\color[rgb]{0,0,0}\makebox(0,0)[lb]{\smash{\textcolor{blue}{$\omega_2$}}}}%
  \end{picture}  
    }

    \caption{An admissible polynomial $f$ with a $(2,1)$-configur\-ation together with two perturbations $g_1$ and $g_2$.}
    \label{fig:perturbadmiss}
    \end{figure}

\begin{proof}
For $(a,b)\in \C^2$, let $f_{a,b}\in \PP$ be the cubic polynomial defined by 
\[f_{a,b}(z) = z^3 - \frac{3}{2} (a+b) z^2 + 3ab z.\]
The critical points of $f_{a,b}$ are $a$ and $b$, so that $f = f_{\omega,\omega'}$. 

Consider the analytic sets 
\[\Sigma:=\left\{(a,b)\in \C^2~;~ f_{a,b}^{\circ (k+\ell)}(a) = 0\right\}\quad \text{and}\quad
\Sigma':=\left\{(a,b)\in \C^2~;~ f_{a,b}^{\circ \ell}(b) = 0\right\}.\] 
There are parameters $(a,b)$ which belong neither to $\Sigma$ nor to $\Sigma'$ (for example, when $a := {\rm i}\sqrt{2}/2$ and $b = -{\rm i}\sqrt{2}/2$, then  $a$ is a fixed point of $f_{a,b}$). As a consequence, $\Sigma$ and $\Sigma'$ are $1$-dimensional complex curves. 

By assumption, $(\omega,\omega')\in \Sigma\cap \Sigma'$. The intersection $\Sigma\cap \Sigma'$ consists of postcritically finite polynomials, thus is bounded in $\C^2$. It follows that $(\omega,\omega')$ is an isolated point of $\Sigma\cap \Sigma'$. 
Let $\a:(\D,0)\to (\C,\omega)$ and $\b:(\D,0)\to (\C,\omega')$ be non constant analytic germs so that $\bigl(\a(t),\b(t)\bigr)\in \Sigma\cap \VV$ for $t\in \D$. Set $F_t:=f_{\a(t),\b(t)}\in \VV$ and consider the sequence of functions $\{\sigma_m:\D\to \C\}_{m\geq 0}$ defined by 
\[\sigma_m(t) := F_t^{\circ \ell}\bigl(\b(t)\bigr) - \bo_{-m}(F_t).\]
As $m$ tends to $+\infty$, the sequence $\{\sigma_m\}$ converges to $\sigma:\D\ni t\mapsto F_t^{\circ \ell}\bigl(\b(t)\bigr)\in \C$. Note that $\sigma$ vanishes at $0$ but does not identically vanish since otherwise, the curve $t\mapsto F_t$ would take its values in $\Sigma\cap \Sigma'$, contradicting the previous observation that $(\omega,\omega')$ is an isolated point of $\Sigma\cap \Sigma'$. It follows from the Rouch\'e Theorem that for $m$ large enough, $\sigma_m$ vanishes at some point $t_m\in \D$ with $t_m\to 0$ as $m\to +\infty$. The result follows with $g_m:=F_{t_m}$, $\omega_m:=\b(t_m)$ and $\omega'_m:=\a(t_m)$. 
\end{proof}

\subsection{Admissible perturbations}

Here, we prove the Key Proposition (Proposition~\ref{prop:key}). Its proof follows directly from Lemma~\ref{lem:profkeyprop}. 

We assume $f\in \VV$ is admissible, $\beta$ is the landing point of the ray of angle $1/2$ for $f$, $\xi$ is the nodal point of $\beta$, $\beta'$ and $\beta''$ in $\mj_f$ with $f^{-1}(\beta) = \{\beta, \beta',\beta''\}$, and  
\[f^{\circ j}(\xi) = \omega, \quad f^{\circ k}(\omega) = \omega'\quad \text{and}\quad f^{\circ \ell}(\omega') = \alpha.\]
Let $\{g_m\}_{m\geq m_0}$ be a sequence of cubic polynomials provided by Lemma \ref{lem:pm}. For $m\geq m_0$, let $\beta_m$ be the landing point of the ray of angle $1/2$ for $g_m$, and let $\xi_m$ be the nodal point of $\beta_m$, $\beta'_m$ and $\beta''_m$ in $\mj_{g_m}$ with $g_m^{-1}(\beta_m) = \{\beta_m, \beta'_m,\beta''_m\}$.

\begin{lemma}\label{lem:profkeyprop}
If $m$ is large enough, then $g_m^{\circ (j+k)}(\xi_m)=\omega_m$. 
\end{lemma}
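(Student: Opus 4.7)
The plan is to identify $\xi_m$ combinatorially via the external angles landing at it and then propagate this description under $g_m^{\circ (j+k)}$.

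First I would apply the convergence results to the sequence $\{g_m\}$: since each $g_m$ is admissible (hence dendritic) and converges to $f$, Lemma~\ref{lem:convcaratheodory} gives uniform convergence $\varphi_{g_m}\to\varphi_f$ of Carathéodory loops, and in particular the triples $(\beta_m,\beta'_m,\beta''_m)$ (the landings of the fixed angles $1/2$, $1/6$, $-1/6$) converge to $(\beta,\beta',\beta'')$. Lemma~\ref{lem:convnodal} then gives $\xi_m\to\xi$. Since $\xi\notin\{\beta,\beta',\beta''\}$, the same holds for $\xi_m$ when $m$ is large, so $\xi_m$ is a genuine branching point of $\mj_{g_m}$.

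Second I would identify $\xi_m$ as the common landing point of specific rays. Because $\xi$ is a branching point of $\mj_f$ separating $\beta,\beta',\beta''$, I can choose three angles $\theta_1,\theta_2,\theta_3$, one in each of the three arcs $[1/2,1/6]_\T$, $[1/6,-1/6]_\T$, $[-1/6,1/2]_\T$, whose rays land at $\xi$ for $f$. Corollary~\ref{coro:nodal} expresses the nodal point as the triple intersection of the three image arcs $\varphi_f\bigl([\theta_i,\theta_{i+1}]_\T\bigr)$. Passing to $g_m$ and using the uniform convergence of the Carathéodory loops, the corresponding triple intersection for $g_m$ equals $\xi_m$; the angles $\theta_i$ (whose $\varphi_{g_m}$-images converge to $\xi$) therefore satisfy $\varphi_{g_m}(\theta_1)=\varphi_{g_m}(\theta_2)=\varphi_{g_m}(\theta_3)=\xi_m$ once $m$ is large. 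Applying $g_m^{\circ (j+k)}$, the image $g_m^{\circ (j+k)}(\xi_m)$ is the common landing of the rays at angles $3^{j+k}\theta_i$ for $g_m$; for $f$, these rays land at $f^{\circ (j+k)}(\xi)=\omega'$.

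Third I would identify this common landing point with $\omega_m$. The critical point $\omega'$ of $f$ continues holomorphically to $\omega_m=\bo'(g_m)$, and the combinatorial ray data at a critical point in a dendritic postcritically finite Julia set is preserved under a perturbation that remains in the postcritically finite locus: if the rays $3^{j+k}\theta_i$ all land at $\omega'$ for $f$, then they all land at $\omega_m$ for $g_m$. This yields $g_m^{\circ (j+k)}(\xi_m)=\omega_m$ for $m$ large, as required.

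The main obstacle is the third step: justifying that the ray angles that collide at the critical point $\omega'$ for $f$ still collide at the critical continuation $\omega_m$ for $g_m$, despite the fact that the critical orbit of $\omega_m$ under $g_m$ takes a long detour through $\bo_{-m}(g_m)$ before meeting $\omega'_m$, a very different combinatorics from that of $\omega'$ under $f$. I expect this to be handled by the puzzle apparatus of \S\ref{sec:convcar}: Proposition~\ref{prop:puzzle} makes the puzzle pieces around $\omega'$ for $f$ shrink to $\{\omega'\}$, and the holomorphic dependence of the fixed-depth puzzle on $g$ in a neighborhood of $f$ (as exploited in the proof of Lemma~\ref{lem:convcaratheodory}) then confines both $\omega_m$ and the common landing of the rays $3^{j+k}\theta_i$ to the same arbitrarily small perturbed puzzle piece, forcing them to coincide.
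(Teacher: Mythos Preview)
Your proposal has a genuine gap in steps two and three, and the suggested puzzle-piece fix does not close it.

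The core error is the repeated assumption that external angles which coland for $f$ continue to coland for $g_m$. This is false here. For $f$ the two angles $\eta_1,\eta_2$ (where $\eta_i=3^{j+k}\theta_i$) land at $\omega'$ precisely because $f^{\circ\ell}(\omega')=\alpha$ and $3^\ell\eta_i=0$. But by construction $g_m^{\circ\ell}(\omega_m)=\bo_{-m}(g_m)\neq\alpha$, so the ray of angle $0$ does \emph{not} land at $g_m^{\circ\ell}(\omega_m)$, and hence $R_m(\eta_1),R_m(\eta_2)$ do \emph{not} land at $\omega_m$. The paper shows they land at two \emph{distinct} nearby points, one of which is $g_m^{\circ k}(\omega'_m)$. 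The angles that do land at $\omega_m$ are perturbed ones $\eta_i+\delta_m^\pm$, obtained by pulling back the angles $\eps_m^\pm$ of the rays landing at $\bo_{-m}(g_m)$ (controlled via Lemmas~\ref{lem:critsep} and~\ref{lem:angles}). The same failure propagates to step two: $R_m(\theta_1),R_m(\theta_3)$ land at a point $\zeta_m$ with $g_m^{\circ j}(\zeta_m)=\omega'_m$, not at $\xi_m$, while $R_m(\theta_2),R_m(\theta_4)$ land at yet other points. Your inference ``$\varphi_{g_m}(\theta_i)\to\xi$, therefore $\varphi_{g_m}(\theta_1)=\varphi_{g_m}(\theta_2)=\varphi_{g_m}(\theta_3)$ for $m$ large'' is simply not valid.

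The puzzle argument cannot rescue this: confining $\omega_m$ and the landing point of $R_m(\eta_i)$ to the same small puzzle piece does not force them to coincide, since a puzzle piece meets the Julia set in many points. What the paper actually does is determine the \emph{new} eight angles $\theta_i+\eta_m^\pm$ whose rays land at the two preimages $\xi_m^\pm$ of $\omega_m$ under $g_m^{\circ(j+k)}$ near $\xi$, and then uses the cyclic positions of $1/2,1/6,-1/6$ among these eight angles to check that one of $\xi_m^\pm$ separates $\beta_m,\beta'_m,\beta''_m$. This separation argument is what identifies $\xi_m$ with a specific $g_m^{\circ(j+k)}$-preimage of $\omega_m$; your proposal lacks any substitute for it. (A minor side issue: you invoke admissibility of $g_m$ at the outset, but that is part of what the Key Proposition is establishing; only the dendritic property, from postcritical finiteness, is available.)
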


The idea of the proof is the following. For any polynomial in $\VV$, the fixed point $\alpha=0$ is the landing point of a unique external ray (of angle  $0$). If the polynomial has $(k,\ell)$-configuration, then $f^{\circ \ell}$ sends the critical point $\omega'$ to $\alpha$ with local degree $2$ and $f^{\circ k}$ sends the critical point $\omega$ to $\omega'$ with local degree $2$. It follows that $\omega'$ is the landing point of exactly two external rays and $\omega$ is the landing point of exactly four external rays. 

If in addition $f$ is admissible, then $f^{\circ j}$ sends $\xi$ to $\omega$ with local degree $1$,  and $\xi$ is the landing point of exactly four external rays separating $\beta$, $\beta'$ and $\beta''$. Note that $f^{\circ (j+k)}$ has a critical point at $\xi$ with critical value $\omega'$. 

For the perturbed map $g_m$,  the critical point $\omega_m$ is the landing of exactly  four external rays. The map $g_m^{\circ (j+k)}$ has a critical point close to $\xi$ with critical value close to $\omega_m$, but different from $\omega_m$. It follows that there are two points $\xi_m^\pm$ close to $\xi$ which are mapped to $\omega_m$ by $g_m^{\circ (j+k)}$. Exactly four rays land at each of these two points. We shall see that those eight rays converge to the four rays landing at $\xi$ for $f$, and that one of the two points $\xi_m^\pm$ separates $\beta_m$, $\beta'_m$ and $\beta''_m$ in $\mj_{g_m}$.

\begin{proof}
Note that $f^{\circ j}:(\C,\xi)\to (\C,\omega)$ has local degree $1$ at $\xi$, $f^{\circ k}:(\C,\omega)\to (\C,\omega')$ has local degree $2$ at $\omega$ and $f^{\circ \ell}:(\C,\omega')\to (\C,\alpha)$ has local degree $2$ at $\omega'$. 
Let $D\Subset \widehat D$ be sufficiently small disks around $\alpha=0$ so that $\pf\cap \widehat D = \{0\}$. Let $\widehat D'$ be the component of $f^{-\ell}(\widehat D)$ which contains $\omega'$, and let $\widehat D''$ be the component of $f^{-j-k}(\widehat D')$ which contains $\xi$. For $m$ large enough, 
\begin{itemize}
\item $g_m^{-\ell}(D)$ has a component $D'_m\Subset \widehat D'$ containing $\omega_m$ and $g_m^{\circ k}(\omega'_m)$,  
\item  $g_m^{-j-k}(D'_m)$ has a component $D''_m\Subset \widehat D''$ containing a point $\zeta_m$ and two points $\xi_m^\pm$ such that 
\[g_m^{\circ j}(\zeta_m) = \omega'_m\quad\text{and}\quad g_m^{\circ (j+k)}(\xi_m^+) = g_m^{\circ (j+k)}(\xi_m^-) = \omega_m.\] 
\end{itemize}
As $m$ tends to $+\infty$, the sequence $\{\zeta_m\}$ and $\{\xi_m^\pm\}$ converge to $\xi$. A summary of the dynamics within the preimages of $D$ before and after perturbation is provided in Figure~\ref{fig:preimalpha}.

\begin{figure}[hbt!]
    \centering
    \subfigure{
     \def\svgwidth{350pt}\subimport{./}{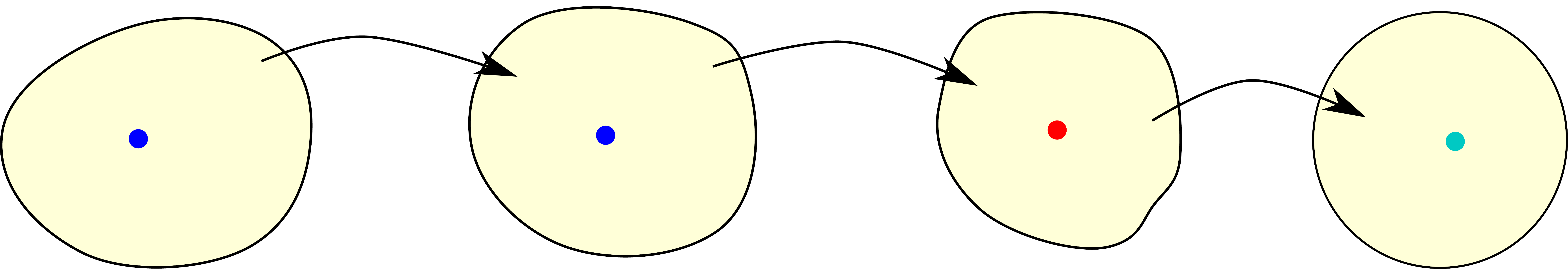_tex} 
   }
    \vspace{0.1in}
    \subfigure{
   \def\svgwidth{350pt}\subimport{./}{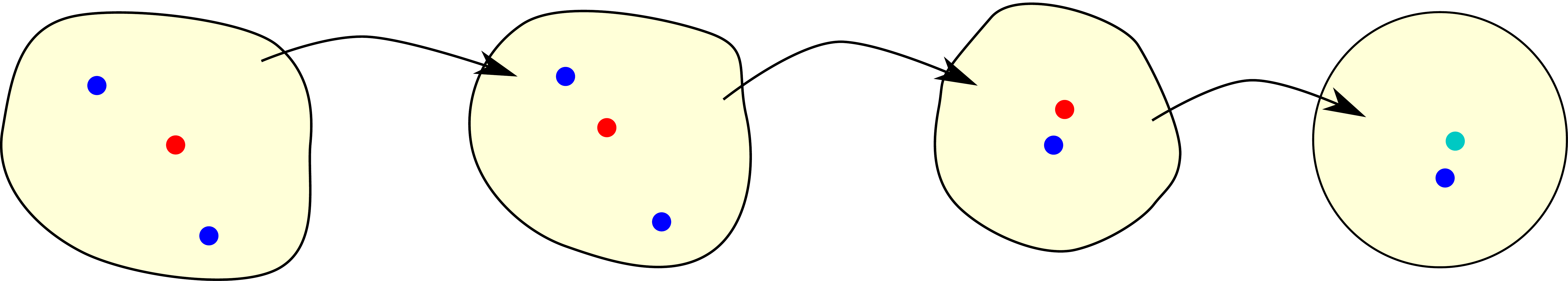_tex} 
    }
    \caption{Dynamics within the preimages of the disk $D$ containing $\alpha$ before and after perturbation. \label{fig:preimalpha}}
    
\end{figure}

Given $\theta\in \T$, denote by $R(\theta)$ the ray of angle $\theta$ for $f$, and by $R_m(\theta)$ the ray of angle $\theta$ for $g_m$. 
There is a single ray landing at $\alpha$: $R(0)$. So, there are two rays landing at $\omega'$, four rays landing at $\omega$ and four rays landing at $\xi$. Let $\theta_1$, $\theta_2$, $\theta_3$ and $\theta_4$ be the angles of the four rays landing at $\xi$, cyclically ordered counterclockwise. Then, modulo $1$, we have that $3^{j+k}\theta_1 = 3^{j+k}\theta_3=:\eta_1$ and $3^{j+k}\theta_2 = 3^{j+k}\theta_4=:\eta_2$, and the rays $R(\eta_1)$ and $R(\eta_2)$ land at $\omega'$. In addition, modulo $1$,  we have that $3^\ell \eta_1 = 3^\ell\eta_2 =  0$ and $R(0)$ lands at $\alpha=0$.

Since $g_m^{\circ \ell}(\omega_m)\neq 0$ and $R_m(0)$ lands at $\alpha=0$, for $m$ large enough, the rays $R_m(\eta_1)$ and $R_m(\eta_2)$ land at two distinct points in $D'_m$. Since $g_m^{\circ (k+\ell)}(\omega'_m) = 0$, one of those rays lands at $g_m^{\circ k}(\omega'_m)$. Without loss of generality, relabelling the rays if necessary, we may assume that this ray is $R_m(\eta_1)$. Then, $R_m(\theta_1)$ and $R_m(\theta_3)$ land at $\zeta_m$, whereas $R_m(\theta_2)$ and $R_m(\theta_4)$ land at two distinct points in $D''_m$. Note that the four rays $R(\theta_1)$, $R(\theta_2)$, $R(\theta_3)$ and $R(\theta_4)$ land at $\xi$ and separate the plane in four connected components. The points $\beta$, $\beta'$ and $\beta''$ are in $3$ distinct connected components. So, $1/2$, $1/6$ and $-1/6$ must belong to distinct components of $\T\setminus \{\theta_1,\theta_2,\theta_3,\theta_4\}$. Without loss of generality, relabelling the rays if necessary, we may assume that one of the angles $1/2$, $1/6$ and $-1/6$ belongs to $(\theta_1,\theta_2)_\T$, one belongs to $(\theta_2,\theta_3)_\T$ and one belongs to $(\theta_3,\theta_1)_\T$  (see Figure~\ref{fig:rayspert}). 

According to Lemma \ref{lem:critsep}, the two rays landing at $\omega'_m$ separate $\alpha=0$ and $\beta_m$. One has angle in $(0,5/12)_\T$ and the other has angle in $(-5/12,0)_\T$ (see Lemma \ref{lem:critsep}). Let us recall that  $g_m^{\circ \ell}(\omega_m)=\bo_{-m}(g_m)$ is the $m$-th iterated preimage of $\omega'_m$ by the univalent branch $g_m:V_{g_m}\to W_{g_m}$. 
It follows from Lemma~\ref{lem:angles} that, if a ray in $W_{g_m}$ has angle in $(0,5/12)_\T$ (respectively $(-5/12,0)_\T$), then the preimage ray in $V_{g_m}$ has angle in $(0,5/36)_\T$ (respectively $(-5/36,0)_\T$). Consequently, the rays landing at $g_m^{\circ \ell}(\omega_m)$ have angles with representatives
\[\eps_m^+\in \left(0,\frac{5}{12\cdot 3^m}\right)\quad\text{and}\quad \eps_m^-\in \left(-\frac{5}{12\cdot 3^m},0\right).\]
 They separate $\alpha=0$ and $\beta_m$.

\begin{figure}[hbt!]
  \centerline{
\setlength{\unitlength}{1cm}
\begin{picture}(13,10)(0,0)
\put(0,5){\fbox{\includegraphics[width=4.5cm]{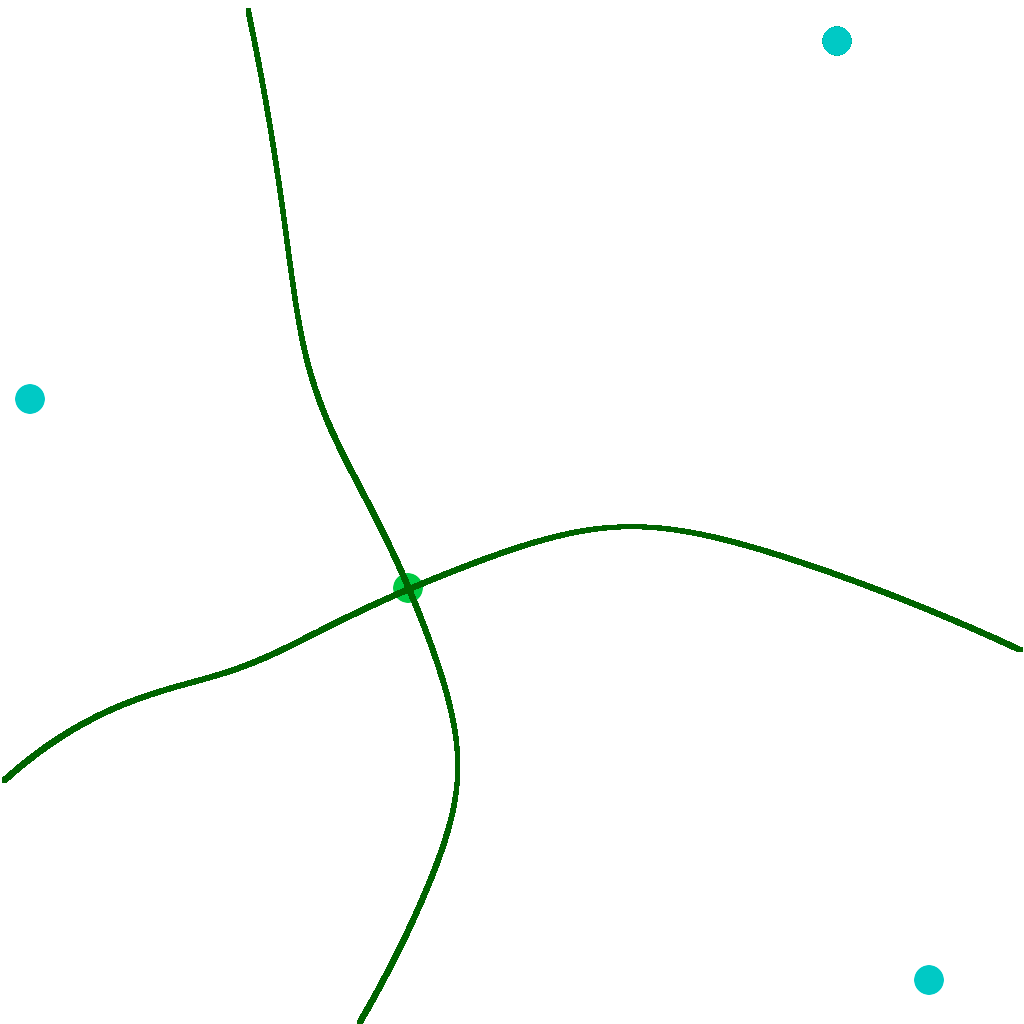}}}
\put(4.95,5){\fbox{\includegraphics[width=3.6cm]{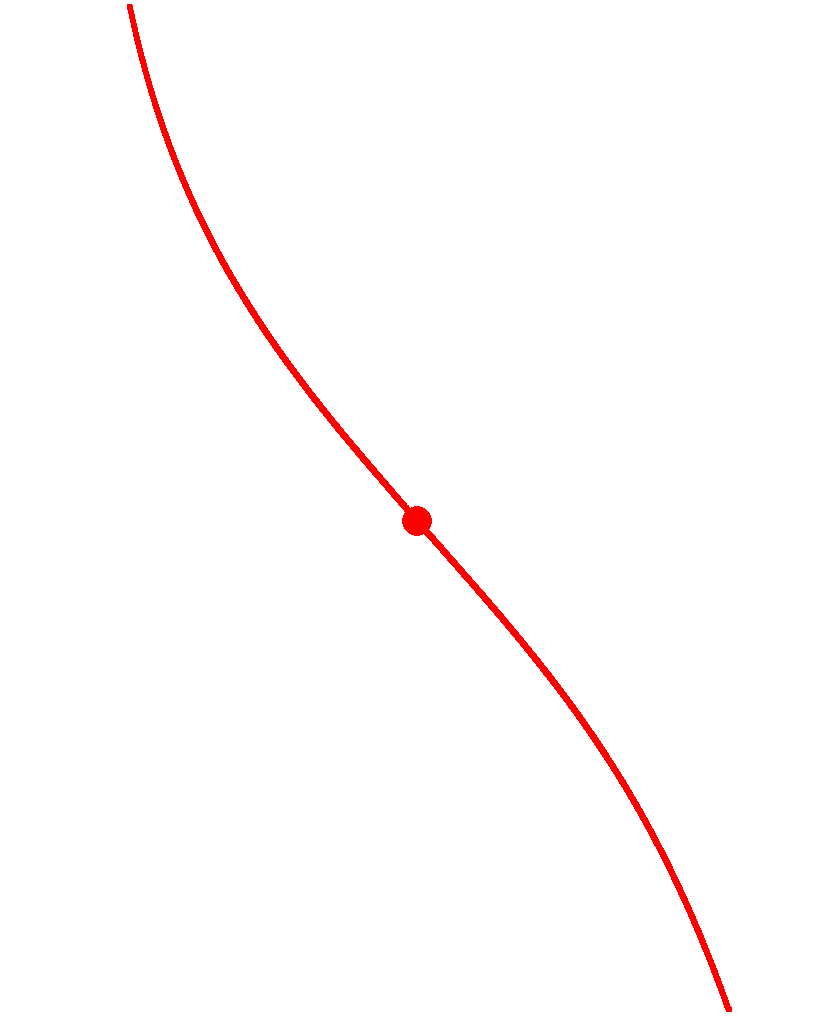}}}
\put(9,5){\fbox{\includegraphics[width=3.6cm]{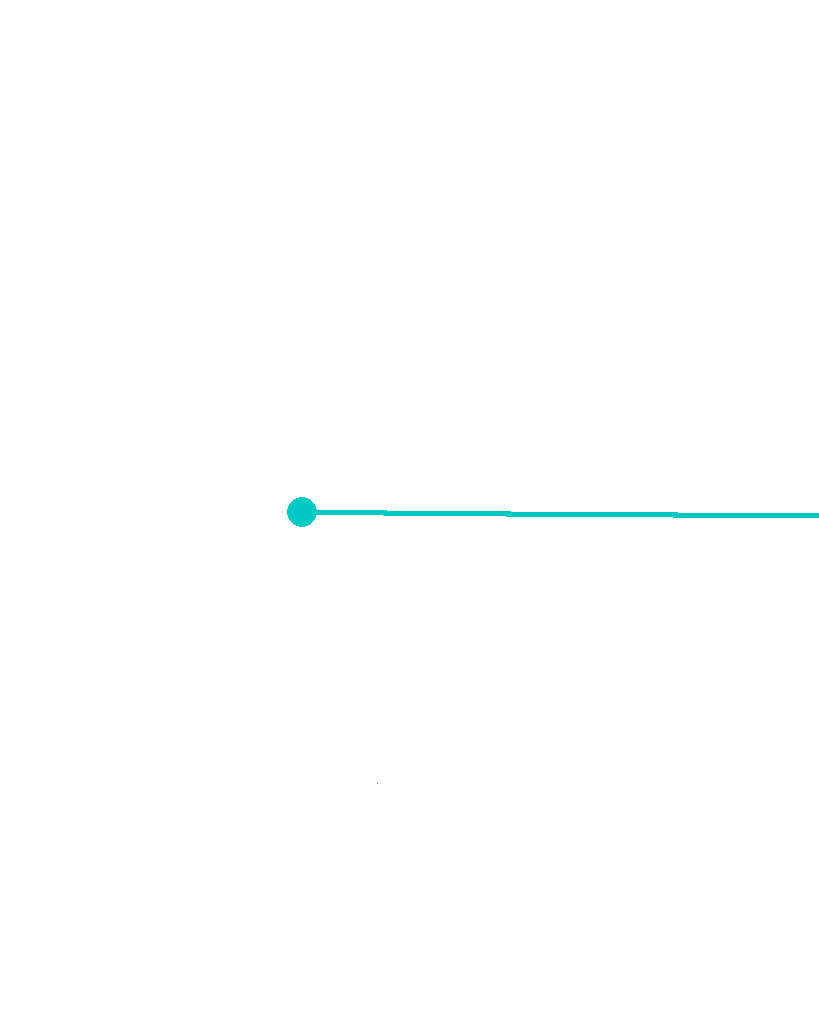}}}
\put(0,0){\fbox{\includegraphics[width=4.5cm]{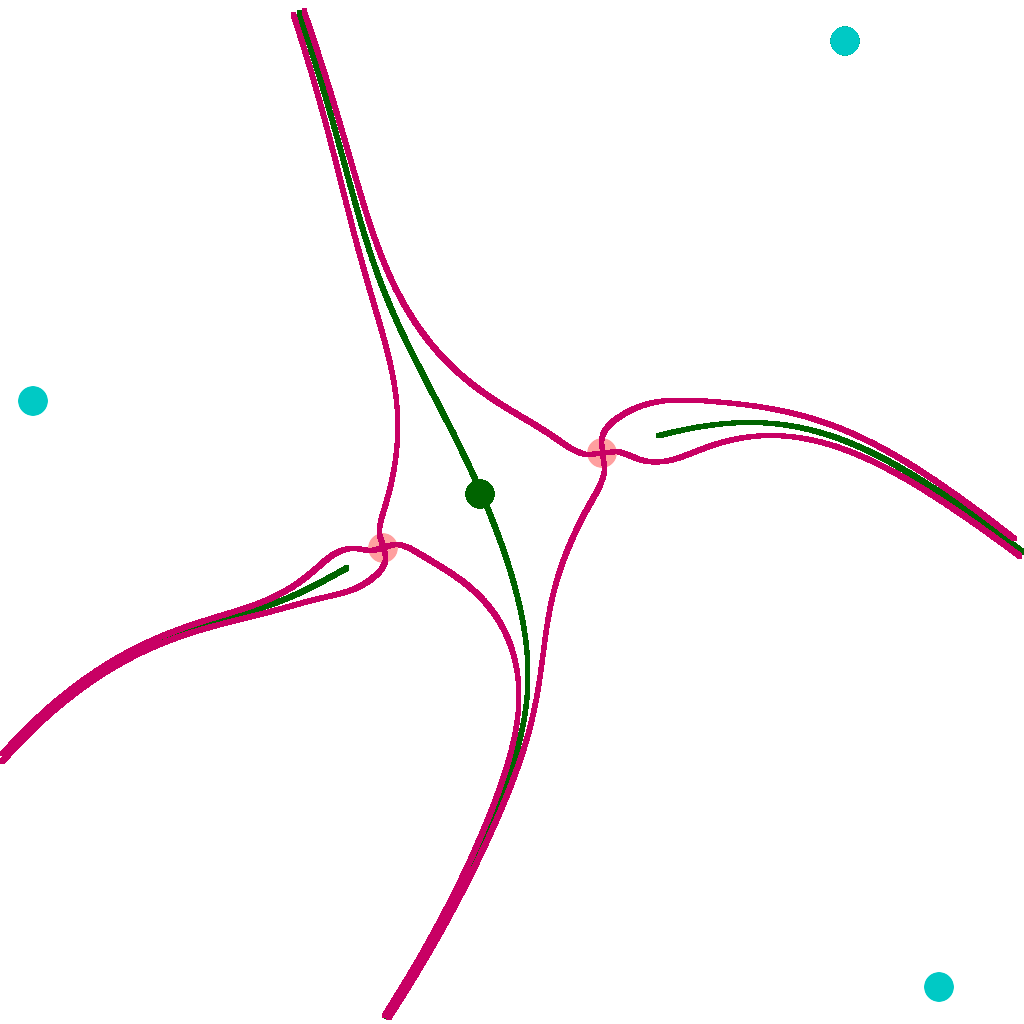}}}
\put(4.95,0){\fbox{\includegraphics[width=3.6cm]{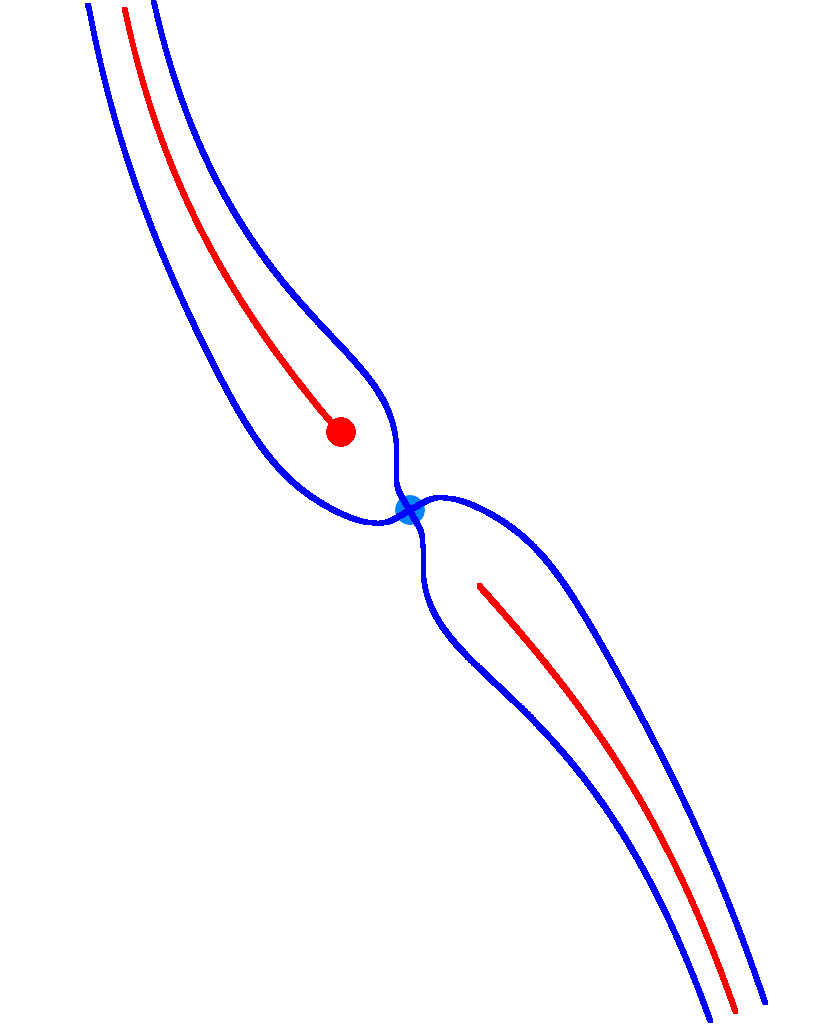}}}
\put(9,0){\fbox{\includegraphics[width=3.6cm]{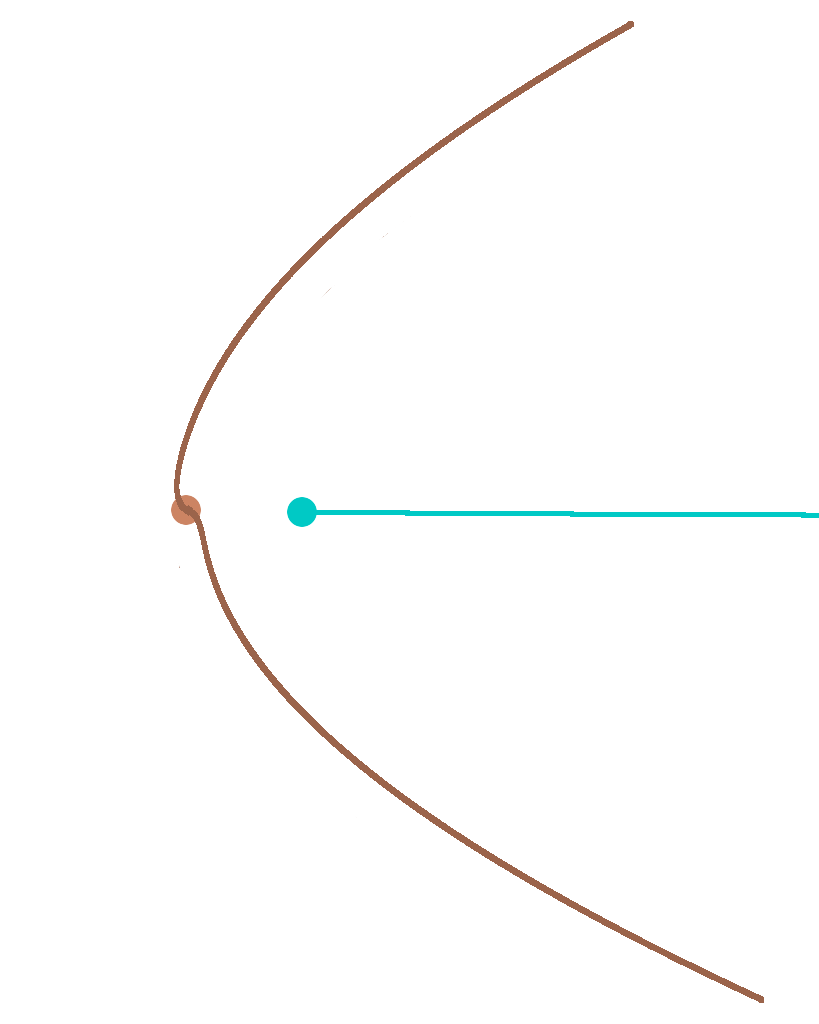}}}
\put(5.,5.5){$f^{\circ (j+k)}$}
\put(9.3,9){$f^{\circ \ell}$}
\put(5.,0.5){$g_m^{\circ (j+k)}$}
\put(9.3,4){$g_m^{\circ \ell}$}
\put(9.04,1.9){$g_m^{\circ \ell}(\omega_m)$}
\put(10.5,2.4){$\alpha$}
\put(10.5,7.4){$\alpha=f^{\circ \ell}(\omega')$}
\put(10.9,4.2){$\eps_m^+$}
\put(11.9,0.5){$\eps_m^-$}
\put(6.4,2.){$\omega_m$}
\put(6.65,2.7){$g_m^{\circ (j+k)}(\zeta_m)$}
\put(6.45,6.85){$\omega'= f^{\circ (j+k)}(\xi)$}
\put(5.4,9.){$\eta_1$}
\put(7.7,5.4){$\eta_2$}
\put(6.,4.1){$\eta_1+\delta_m^-$}
\put(5.,2.4){$\eta_1+\delta_m^+$}
\put(6.8,0.1){$\eta_2+\delta_m^-$}
\put(7.6,2){$\eta_2+\delta_m^+$}
\put(1.9,7.1){$\xi$}
\put(3.8,5.2){$\beta''$}
\put(3.9,9.){$\beta'$}
\put(0.2,7.95){$\beta$}
\put(2.3,2.2){$\zeta_m$}
\put(1.8,1.7){$\xi_m^-$}
\put(2.4,2.7){$\xi_m^+$}
\put(3.7,0.2){$\beta''_m$}
\put(3.9,4.){$\beta'_m$}
\put(0.2,2.95){$\beta_m$}
\put(1.9,5.1){$\theta_1$}
\put(3.9,7.){$\theta_2$}
\put(1.3,9.1){$\theta_3$}
\put(0.2,6.4){$\theta_4$}
\put(2.1,0.1){$\theta_1+\eta_m^+$}
\put(3.4,1.8){$\theta_2+\eta_m^-$}
\put(3.4,2.8){$\theta_2+\eta_m^+$}
\put(1.6,4.1){$\theta_3+\eta_m^-$}
\thicklines
\put(4.5, 0.3){\vector(1,0){1.5}}
\put(8.5, 3.8){\vector(1,0){1.5}}
\put(4.5, 5.3){\vector(1,0){1.5}}
\put(8.5, 8.8){\vector(1,0){1.5}}
\end{picture}
}
\caption{ External rays of $f$ landing near $\xi$, $\omega'$ and $\alpha$ and external rays of $g_m$ landing near $\zeta_m$, $\omega_m$ and $\alpha$. The angles of the external rays are indicated. 
\label{fig:rayspert}}
\end{figure}

Set $\delta_m^\pm := \eps_m^\pm/3^\ell$ and $\eta_m^\pm:= \delta_m^\pm/3^{j+k}$. Then, for $m$ large enough, each of the four rays 
\[R_m(\eta_1+\delta_m^+),\quad R_m(\eta_1+\delta_m^-), \quad R_m(\eta_2+\delta_m^+)\quad\text{and}\quad R_m(\eta_2+\delta_m^-)\]
land in $D'_m$ at a point $z$ satisfying $g_m^{\circ \ell}(z) = g_m^{\circ \ell}(\omega_m)$. This point is necessarily $\omega_m$ itself (see Figure~\ref{fig:rayspert}). Similarly, each of the eight rays 
\[R_m(\theta_1+\eta_m^\pm),\quad R_m(\theta_2+\eta_m^\pm), \quad R_m(\theta_3+\eta_m^\pm)\quad\text{and}\quad R_m(\theta_4+\eta_m^\pm)\]
land in $D''_m$ at a point $z$ satisfying $g_m^{\circ (j+k)}(z) = \omega_m$. This point in necessarily $\xi_m^+$ or $\xi_m^-$. So, four of those rays land at $\xi_m^+$ and four of them land at $\xi_m^-$. 

If $m$ is large enough, $\theta_1+\eta_m^+$, $\theta_2+\eta_m^\pm$ and $\theta_3+\eta_m^-$ belong to $(\theta_1,\theta_3)_\T$ and $\theta_3+\eta_m^+$, $\theta_4+\eta_m^\pm$ and $\theta_1+\eta_m^-$ belong to $(\theta_3,\theta_1)_\T$. 
The rays of angles $\theta_1$ and $\theta_3$ separate the plane in two connected components. 
So, relabelling the points $\xi_m^\pm$ if necessary, we may assume that 
\[R_m(\theta_1+\eta_m^+),\quad R_m(\theta_2+\eta_m^-), \quad R_m(\theta_2+\eta_m^+)\quad\text{and}\quad R_m(\theta_3+\eta_m^-)\]
land at $\xi_m^+$ and that 
\[R_m(\theta_3+\eta_m^+),\quad R_m(\theta_4+\eta_m^-), \quad R_m(\theta_4+\eta_m^+)\quad\text{and}\quad R_m(\theta_1+\eta_m^-)\]
land at $\xi_m^-$ (see Figure~\ref{fig:rayspert}). 

Finally, if $m$ is large enough, one of the angles $1/2$, $1/6$ and $-1/6$ belongs to $(\theta_1+\eta_m^+,\theta_2+\eta_m^-)_\T$, one belongs to $(\theta_2+\eta_m^+,\theta_3+\eta_m^-)_\T$ and one belongs to $(\theta_3+\eta_m^+,\theta_1+\eta_m^-)_\T$. Then, the branching point separating $\beta_m$, $\beta'_m$ and $\beta''_m$ in $\mj_{g_m}$ is $\xi_m=\xi_m^+$. 
\end{proof}

\bibliography{bibliografia}
\bibliographystyle{plain}

\end{document}